\newcommand{\vertiii}[1]{{\left\vert\kern-0.25ex\left\vert\kern-0.25ex\left\vert #1
        \right\vert\kern-0.25ex\right\vert\kern-0.25ex\right\vert}}
\newcommand{\fcolon}{%
  \mathrel{\mathpalette\fcolon@\relax}%
}
\newcommand{\fcolon@}[2]{%
  \sbox\z@{$\m@th#1:$}%
  \vbox to\ht\z@{%
    \hbox{$\m@th#1.$}%
    \vss
    \hbox{$\m@th#1.$}%
    \vss
    \hbox{$\m@th#1.$}%
  }%
}
\newcommand{\Ivec}{\mathbf{I}}
\newcommand{\Qvec}{\mathbf{Q}}
\newcommand{\Gvec}{\mathbf{G}}
\newcommand{\Nvec}{\mathbf{N}}
\newcommand{\Pvec}{\mathbf{P}}
\newcommand*{\llbrace}{\left\{\mskip-5mu\left\{}
\newcommand*{\rrbrace}{\right\}\mskip-5mu\right\}}
\theoremstyle{plain}
\newtheorem{theorem}{Theorem}[section]
\newtheorem{lemma}[theorem]{Lemma}
\newtheorem{remark}[theorem]{Remark}
\begin{document}
\title{Variational and numerical analysis of a $\Qvec$-tensor model for smectic-A liquid crystals}
\thanks{The work of JX is supported by the National Natural Science Foundation of China (No. 12201636) and the Research Fund of National University of Defense Technology [grant number ZK22-37]. The work of PEF is supported by EPSRC [grant numbers EP/R029423/1 and EP/W026163/1].}


\author{Jingmin Xia}\address{College of Meteorology and Oceanography, National University of Defense Technology, Changsha, China;
\email{jingmin.xia@nudt.edu.cn}}
\author{Patrick E.~Farrell}\address{Mathematical Institute, University of Oxford, Oxford, UK; \email{patrick.farrell@maths.ox.ac.uk}}

\date{The dates will be set by the publisher}

\begin{abstract}
    We analyse an energy minimisation problem recently proposed for modelling smectic-A liquid crystals.
    The optimality conditions give a coupled nonlinear system of partial differential equations, with a second-order equation for the tensor-valued nematic order parameter $\Qvec$ and a fourth-order equation for the scalar-valued smectic density variation $u$.
    Our two main results are a proof of the existence of solutions to the minimisation problem, and the derivation of {a priori} error estimates for its discretisation of the decoupled case (i.e., $q=0$) using the $\mathcal{C}^0$ interior penalty method.
    More specifically, optimal rates in the $H^1$ and $L^2$ norms are obtained for $\Qvec$, while optimal rates in a mesh-dependent norm and $L^2$ norm are obtained for $u$.
    Numerical experiments confirm the rates of convergence.
\end{abstract}

%
\subjclass{76A15, 49J10, 35B45, 65N12, 65N30}
\keywords{
    $\mathcal{C}^0$ interior penalty method, a priori error estimates, finite element methods, smectic liquid crystals
}

\maketitle
\section*{Introduction}

Smectic liquid crystals (LC) are layered mesophases that have a periodic modulation of the mass density along one spatial direction.
Informally, they can be thought of as one-dimensional solids along the direction of periodicity and two-dimensional fluids along the other two remaining directions.
According to different in-layer structures, several types of smectic LC are recognised, e.g., smectic-A, smectic-B and smectic-C (see \cite[Figure 9]{sackmann-1989-article} for illustrations of different smectic phases).
In particular, in the smectic-A phase, the molecules tend to align parallel to the normals of the smectic layers.
For a broader review of liquid crystals, see \cite{ball-2017-article, gennes-book, stewart-2004-book}.

Several models have been proposed to describe smectic-A liquid crystals. The classical de Gennes model \cite{gennes-1972-article} employs a complex-valued order parameter to describe the magnitude and phase of the density variation. This complex-valued parameterisation leads to some key modelling difficulties, which motivated the development by Pevnyi, Selinger \& Sluckin (PSS) of a smectic-A model with a real-valued smectic order parameter and a vector-valued nematic order parameter \cite{pevnyi-2014-article}. The use of a vector-valued nematic order parameter limits the kinds of defects the model can permit~\cite{ball-2017-article}, and hence Ball \& Bedford (BB) proposed a version of the PSS model employing a tensor-valued nematic order parameter \cite{ballbed-2015-article} instead.
The model considered in this work is similar to the BB model, but with additional modifications to render it amenable to numerical simulation (see~\cite{xia-2021-article} for details). The model was used to numerically simulate several key smectic defect structures, such as oily streaks and focal conic domains, for the first time.

While the numerical modelling of nematic liquid crystals is now mature, relatively little work has considered smectics. Garc\`{i}a-Cervera and Joo~\cite{garcia-cervera-2010} perform numerical simulations of the classical de Gennes model~\cite{gennes-1972-article} in the presence of a magnetic field, using a combined Fourier-finite difference approach. Wittmann et al.~use density functional theory to investigate the topology of smectic liquid crystals in complex confinement~\cite{rene-2021-article}. Monderkamp et al.~examine the topology of defects in two-dimensional confined smectics with the help of extensive Monte Carlo simulations~\cite{paul-2021-article}. Our goal in this work is to analyse a model for smectic-A liquid crystals, and its finite element discretization, that was recently proposed by Xia, MacLachlan, Atherton and Farrell in~\cite{xia-2021-article}.

We consider an open, bounded and convex spatial domain $\Omega\subset \mathbb{R}^d$, $d\in \{2,3\}$ with Lipschitz boundary $\partial\Omega$. The smectic-A free energy we analyse in this work is given by
\begin{equation}
    \label{eq:unify}
    \mathcal{J}(u, \Qvec) = \int_\Omega \left(f_s(u) + B\left|\mathcal{D}^2 u + q^2 \left(\Qvec+\frac{\Ivec_d}{d}\right) u \right|^2 +f_n(\Qvec,\nabla \Qvec) \right),
\end{equation}
where
$\Ivec_d$ denotes the $d\times d$ identity matrix,
$\mathcal{D}^2$ is the Hessian operator, $f_s(u)$ is the smectic bulk energy density
\begin{equation*}
    f_s(u) \coloneqq \frac{a_1}{2} u^2 +\frac{a_2}{3} u^3+\frac{a_3}{4} u^4
\end{equation*}
and $f_n(\Qvec,\nabla \Qvec)$ is the nematic bulk energy density
\begin{equation}
    \label{eq:f_n}
    \begin{aligned}
        f_n(\Qvec,\nabla \Qvec) &= f_n^e(\nabla \Qvec) + f_n^b(\Qvec)\\
                        &\coloneqq \frac{K}{2}|\nabla \Qvec|^2
                        +\begin{cases}
                            \left(-l \left(\text{tr}(\Qvec^2)\right) + l \left(\text{tr}(\Qvec^2)\right)^2\right), &\text{if }d=2,\\
                            \left(-\frac{l}{2} \left(\text{tr}(\Qvec^2)\right) - \frac{l}{3} \left(\text{tr}(\Qvec^3)\right) + \frac{l}{2} \left(\text{tr}(\Qvec^2)\right)^2\right), & \text{if }d=3.
                        \end{cases}
    \end{aligned}
\end{equation}
Here, $B > 0$ is the nematic-smectic coupling parameter, $a_1,a_2,a_3$ represent smectic bulk constants with $a_3 > 0$, $K > 0$ and $l > 0$ are nematic elastic and bulk constants, respectively, $q \ge 0$ is the wave number of the smectic layers, and the trace of a matrix is given by $\mathrm{tr}(\cdot)$.

The two main contributions of this paper are to prove existence of minimisers to the problem of minimising $\mathcal{J}$ over a suitably-defined admissible set, and to derive
{a priori} error estimates for its discretisation using the $\mathcal{C}^0$ interior penalty method~\cite{brenner-2011-book,brenner-2005-article}.
We show the existence result of minimising the free energy \cref{eq:unify} in \cref{sec:existence}.
We then derive a priori error estimates for both $\Qvec$ and $u$ in the simplified case $q = 0$ in \cref{sec:decoupled}.
We confirm that the theoretical predictions match numerical experiments in \cref{sec:num}, for both $q = 0$ and $q > 0$.


In order to avoid the proliferation of constants throughout this work, we use the notation $a \lesssim b$ (respectively, $b \gtrsim a$) to represent the relation $a \le C b$ (respectively, $b \ge C a$) for some generic constant $C$ (possibly not the same constant on each appearance) independent of the mesh.

\section{Existence of minimisers}
\label{sec:existence}

To formulate the minimisation problem for \cref{eq:unify}, we must first define the admissible space $\mathcal{A}$
in which minimisers will be sought. We define $\mathcal{A}$ as
\begin{equation}
    \label{eq:admis}
        \mathcal{A} = \bigg\{ (u, \Qvec) \in H^2(\Omega,\mathbb{R}) \times H^1(\Omega,S_0):
                                          \Qvec=\Qvec_b, u=u_b\ \text{on}\ \partial\Omega\bigg\},
\end{equation}
with the specified Dirichlet boundary data $\Qvec_b\in H^{1/2}(\partial\Omega,S_0)$ and $u_b\in H^{3/2}(\partial\Omega,\mathbb{R})$.
Here, 
$S_0$ denotes the space of symmetric and traceless $d\times d$ real-valued matrices.
For simplicity of the analysis, we only consider Dirichlet boundary conditions for $\Qvec$ and $u$ here, but other types of boundary conditions (e.g., a mixture of the Dirichlet and natural boundary conditions for $u$ as illustrated in the implementations in \cite{xia-2021-article}) can be taken.
With this admissible space, we consider the minimisation problem
\begin{equation} \label{eq:problem}
\min_{(u, \Qvec) \in \mathcal{A}} \mathcal{J}(u, \Qvec).
\end{equation}

Notice that $f_n(\Qvec,\nabla \Qvec)$ is the classical Landau de Gennes (LdG) free energy density \cite{gennes-book, mottram-2014-article} for nematic LC and it is proven by Davis \& Gartland \cite[Corollary 4.4]{davis-1998-article} that there exists a minimiser of the functional $\int_\Omega f_n(\Qvec,\nabla\Qvec)$ for $\Qvec\in H^1(\Omega,S_0)$ in three dimensions.
Moreover, Bedford \cite[Theorem 5.18]{bedford-2014-phd} gives an existence result for the BB model in three dimensions:
\begin{equation*}
    \min_{(u,\Qvec)\in \mathcal{A}^{BB}}\mathcal{J}^{BB}(u,\Qvec) = \int_\Omega \left\{\frac{K}{2}|\nabla \Qvec|^2 + B\left|\mathcal{D}^2u +q^2\left(\frac{\Qvec}{s}+\frac{\Ivec_3}{3}\right)u \right|^2 + \frac{a_1}{2} u^2 + \frac{a_2}{3} u^3 + \frac{a_3}{4} u^4\right\},
\end{equation*}
with an admissible space $\mathcal{A}^{BB}\coloneqq \{\Qvec\in SBV(\Omega,S_0),u\in H^2(\Omega,\mathbb{R}): \Qvec=s\left(\mathbf{n}\otimes\mathbf{n} -\frac{\Ivec_d}{d}\right), s\in [0,1],|\mathbf{n}|=1\}$,
where $SBV$ denotes special functions of bounded variation. For simplicity, we have ignored the surface integral here in the energy functional of the BB model.
One can observe its resemblance to \cref{eq:unify}.
Motivated by the above results, we prove the existence of minimisers to \cref{eq:unify} via the direct method of calculus of variations. 

\begin{theorem}
    \label{thm:existence}
    (Existence of minimisers)
    Let $\mathcal{J}$ be of the form \cref{eq:unify} with positive parameters $a_3$, $B$, $K$, $l$ and non-negative wave number $q$.
    Then there exists a solution pair $(u^\star,\Qvec^\star)$ that solves \cref{eq:problem}.
\end{theorem}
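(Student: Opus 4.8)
The plan is to apply the direct method of the calculus of variations. First I would verify that the admissible set $\mathcal{A}$ is nonempty: since $\Qvec_b \in H^{1/2}(\partial\Omega, S_0)$ and $u_b \in H^{3/2}(\partial\Omega,\mathbb{R})$, standard trace-extension theorems give at least one pair $(\tilde u, \tilde\Qvec) \in H^2(\Omega,\mathbb{R}) \times H^1(\Omega,S_0)$ attaining these data, so $\mathcal{J}$ has at least one finite value and $\inf_{\mathcal{A}} \mathcal{J} =: m < \infty$. Next I would show $m > -\infty$ and extract a minimising sequence with bounded norms, i.e. establish coercivity. The key point is that the quartic terms $\tfrac{a_3}{4} u^4$ in $f_s$ and $\tfrac{l}{2}(\tr \Qvec^2)^2$ (resp. $l(\tr\Qvec^2)^2$) in $f_n^b$ dominate the lower-order polynomial terms: for any $\varepsilon>0$ one has estimates like $|\tfrac{a_1}{2}u^2 + \tfrac{a_2}{3}u^3| \le \varepsilon u^4 + C_\varepsilon$ pointwise, and similarly for the nematic bulk terms using $|\tr(\Qvec^3)| \lesssim |\Qvec|^3 \le \varepsilon|\Qvec|^4 + C_\varepsilon$ and $|\tr(\Qvec^2)| = |\Qvec|^2$. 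Absorbing these into the quartic terms shows $\mathcal{J}(u,\Qvec) \ge c\big(\|u\|_{L^4}^4 + \|\Qvec\|_{L^4}^4\big) + \tfrac{K}{2}\|\nabla\Qvec\|_{L^2}^2 + B\|\mathcal{D}^2 u + q^2(\Qvec + \Ivec_d/d)u\|_{L^2}^2 - C|\Omega|$, hence $m > -\infty$. To control the full $H^2$ norm of $u$ I would write $\mathcal{D}^2 u = [\mathcal{D}^2 u + q^2(\Qvec+\Ivec_d/d)u] - q^2(\Qvec+\Ivec_d/d)u$ and bound the second term in $L^2$ using $\|(\Qvec + \Ivec_d/d)u\|_{L^2} \lesssim \|\Qvec\|_{L^4}\|u\|_{L^4} + \|u\|_{L^2}$, which is already controlled; then boundary conditions plus elliptic/Poincaré-type estimates on a convex domain bound $\|u\|_{H^2}$, and $\|\Qvec\|_{H^1}$ is controlled directly since $\|\nabla\Qvec\|_{L^2}$ and $\|\Qvec\|_{L^2}$ (via $L^4$) are bounded.

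Having a minimising sequence $(u_k, \Qvec_k)$ bounded in $H^2(\Omega,\mathbb{R}) \times H^1(\Omega,S_0)$, I would pass to a subsequence converging weakly in these spaces to some $(u^\star, \Qvec^\star)$, and strongly in lower-order norms by Rellich--Kondrachov: $u_k \to u^\star$ strongly in $W^{1,p}(\Omega)$ for suitable $p$ (in particular in $L^4$ and $L^2$, and $\nabla u_k \to \nabla u^\star$ strongly in $L^2$ since $H^2 \hookrightarrow\hookrightarrow H^1$), and $\Qvec_k \to \Qvec^\star$ strongly in $L^p(\Omega,S_0)$ for $p < 6$ (so in $L^4$). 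Compactness of the trace operator ensures the boundary conditions survive the limit, so $(u^\star, \Qvec^\star) \in \mathcal{A}$. Then I would show $\mathcal{J}$ is weakly lower semicontinuous along this sequence. The gradient term $\tfrac{K}{2}\int_\Omega |\nabla\Qvec|^2$ is convex in $\nabla\Qvec$, hence weakly lsc in $H^1$. The coupling term $B\int_\Omega |\mathcal{D}^2 u + q^2(\Qvec+\Ivec_d/d)u|^2$ requires slightly more care: $\mathcal{D}^2 u_k \rightharpoonup \mathcal{D}^2 u^\star$ weakly in $L^2$, while $q^2(\Qvec_k + \Ivec_d/d)u_k \to q^2(\Qvec^\star+\Ivec_d/d)u^\star$ strongly in $L^2$ (product of a strongly-$L^4$-convergent and a strongly-$L^4$-convergent sequence), so the sum converges weakly in $L^2$ and the $L^2$-norm-squared is weakly lsc. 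Finally all the bulk terms $f_s(u_k)$ and $f_n^b(\Qvec_k)$ are continuous polynomial Nemytskii operators that converge strongly in $L^1$ along the strongly convergent subsequences, hence their integrals converge. Combining, $\mathcal{J}(u^\star, \Qvec^\star) \le \liminf_k \mathcal{J}(u_k, \Qvec_k) = m$, and since $(u^\star,\Qvec^\star) \in \mathcal{A}$ we get equality, so $(u^\star, \Qvec^\star)$ solves \cref{eq:problem}.

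The main obstacle I anticipate is the coercivity step, specifically extracting a uniform $H^2$ bound on $u$ from the combined coupling-plus-bulk energy when $q > 0$: the Hessian only appears inside the squared coupling term $|\mathcal{D}^2 u + q^2(\Qvec+\Ivec_d/d)u|^2$, so one must carefully argue that the subtracted term $q^2(\Qvec+\Ivec_d/d)u$ is controlled in $L^2$ by quantities already dominated by the energy — this uses the $L^4$ bound on $u$ coming from $f_s$ and on $\Qvec$ coming from $f_n^b$, both of which are genuinely needed — and then invoke the $H^2$ elliptic regularity estimate $\|u\|_{H^2} \lesssim \|\mathcal{D}^2 u\|_{L^2} + \|u\|_{H^{3/2}(\partial\Omega)}$ valid on convex domains. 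A secondary technical point is confirming that the boundary data space for $u$ is compatible with $H^2$ interior regularity, which is exactly why $u_b \in H^{3/2}(\partial\Omega)$ is assumed.
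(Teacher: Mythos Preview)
Your proposal is correct and follows essentially the same direct-method strategy as the paper: bound the energy from below using the quartic bulk terms, extract a bounded minimising sequence in $H^2\times H^1$, pass to weak limits, and verify sequential weak lower semicontinuity term by term. The only minor differences are bookkeeping---you spell out the coercivity step for $\|\mathcal{D}^2 u\|_{L^2}$ via the decomposition $\mathcal{D}^2 u = [\mathcal{D}^2 u + q^2(\Qvec+\Ivec_d/d)u] - q^2(\Qvec+\Ivec_d/d)u$ more explicitly than the paper does, and you handle the strong $L^2$ convergence of the product $(\Qvec_k+\Ivec_d/d)u_k$ via $L^4\times L^4$ rather than the paper's $L^\infty\times L^2$ argument using the embedding $H^2\hookrightarrow L^\infty$---but the architecture is the same.
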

\begin{proof}
    Note that both the smectic density $f_s$ and the nematic bulk density $f_n^b$ are bounded from below as $a_3,l>0$.
    Thus, $\mathcal{J}$ is also bounded from below and we can choose a minimising sequence $\{(u_j,\Qvec_j)\}$, i.e.,
    \begin{equation}
        \label{eq:upper}
        \begin{aligned}
            (u_j,\Qvec_j)\in \mathcal{A}, \ u_j-\tilde{u}\in H^2\cap H^1_0(\Omega,\mathbb{R}), \ &\Qvec_j-\tilde{\Qvec} \in H^1_0(\Omega,S_0), \\
            \mathcal{J}(u_j,\Qvec_j) \overset{j\to \infty}{\longrightarrow} \inf\{\mathcal{J}(u,\Qvec): (u,\Qvec)\in \mathcal{A},\ u-\tilde{u}\in H^2\cap H^1_0(\Omega,\mathbb{R}),\ &\Qvec-\tilde{\Qvec}\in H^1_0(\Omega,S_0)\} < \infty.
        \end{aligned}
    \end{equation}
    Here we set $\tilde{\Qvec}\in H^1(\Omega,S_0)$ (resp.\ $\tilde{u}\in H^2(\Omega,\mathbb{R})$) to be any function with trace $\Qvec_b$ (resp.\ $u_b$).
    We tackle the three terms in $\mathcal{J}(u,\Qvec)$ separately in the following.

    First, for the nematic energy term $\int_\Omega f_n(\Qvec,\nabla \Qvec)$, we can follow the proof of \cite[Theorem 4.3]{davis-1998-article} to obtain that $f_n(\Qvec_j,\nabla \Qvec_j)$ is coercive in $H^1(\Omega,S_0)$, i.e., $f_n$ grows unbounded as $\|\Qvec_j\|_1\rightarrow \infty$, and thus the minimising sequence $\{\Qvec_j\}$ must be bounded in $\mathbf{H}^1(\Omega,S_0)$.
    Since $H^1(\Omega)$ is a reflexive Banach Space, we have a subsequence (also denoted as $\{\Qvec_j\}$) that weakly converges to $\Qvec^\star\in H^1(\Omega,S_0)$ such that $\Qvec^\star-\tilde{\Qvec} \in H^1_0(\Omega,S_0)$, and from the Rellich--Kondrachov theorem it follows that
    $
    \Qvec_j \rightarrow \Qvec^\star \text{ in } L^2(\Omega) \text{ and }\nabla \Qvec_j \rightharpoonup \nabla \Qvec^\star \text{ in } L^2(\Omega).
    $
    The weak lower semi-continuity of the nematic energy density $f_n$ in \cref{eq:f_n} is guaranteed by \cite[Lemma 4.2]{davis-1998-article}, therefore,
    \begin{equation}
        \label{eq:lim1}
        \liminf_{j\to\infty} \int_{\Omega} f_n(\Qvec_j,\nabla \Qvec_j) \ge \int_\Omega f_n(\Qvec^\star,\nabla \Qvec^\star).
    \end{equation}

    For the smectic bulk term $\int_\Omega f_s(u)$, we can follow the proof in \cite[Theorem 5.19]{bedford-2014-phd}.
    By \cref{eq:upper}, we have
    $$
        \sup_j \int_\Omega \left( \left|\mathcal{D}^2 u_j\right|^2+ |u_j|^2\right) < \infty,
    $$
    which implies an upper bound for $\nabla u_j$ using
        \cite[Ineq.\ (5.42)]{bedford-2014-phd}.
        Hence, $\{u_j\}$ is bounded in $H^2(\Omega)$ and thus there is a subsequence (also denoted as $\{u_j\}$) such that
    $
    u_j \rightharpoonup u^\star \text{ in } H^2(\Omega)
    $
    and $u^\star-\tilde{u}\in H^2\cap H^1_0(\Omega)$.
    Moreover, one can readily check that $\|u^\star\|_\infty < \infty$ by the Sobolev embedding of $H^2(\Omega)$ into the H\"older spaces $\mathcal{C}^{\mathfrak{t},\varkappa_0}(\Omega)$ ($\mathfrak{t}+\varkappa_0=1$ for $d=2$ and $\mathfrak{t}+\varkappa_0=1/2$ for $d=3$) and the boundedness of domain $\Omega$.
    Again, it follows from the Rellich--Kondrachov theorem that
    $
    u_j \rightarrow u^\star \text{ in }L^2(\Omega) \text{ and } \mathcal{D}^2 u_j \rightharpoonup \mathcal{D}^2 u^\star \text{ in } L^2(\Omega).
    $
    Noting $f_s(u)$ is bounded from below for all $u\in H^2(\Omega)$, we then obtain
    \begin{equation}
        \label{eq:lim2}
        \liminf_{j\to\infty} \int_\Omega f_s(u_j) \ge \int_\Omega f_s(u^\star).
    \end{equation}

    Now, we consider the nematic-smectic coupling term $\int_\Omega B\left|\mathcal{D}^2 u + q^2 \left(\Qvec+\frac{\Ivec_d}{d}\right) u \right|^2$ in $\mathcal{J}(u,\Qvec)$.
    Note that when the wave number $q=0$, this term reduces to $\int_\Omega B |\mathcal{D}^2 u|^2$ and it is straightforward to obtain the weak lower semi-continuity property.
    Therefore, we discuss the case of $q>0$ in detail as follows.
    By the $\mathbf{H}^1$-boundedness property of the minimising sequence $\{\Qvec_j\}$ and the fact that $\|u^\star\|_\infty<\infty$, we can deduce
        \begin{align*}
            \int_\Omega |u_j \Qvec_j -u^\star \Qvec^\star|^2 &= \int_\Omega \left| (u_j-u^\star)\Qvec_j + u^\star(\Qvec_j-\Qvec^\star)\right|^2 \\
                                                     & \le 2\int_\Omega \left(|u_j-u^\star|^2|\Qvec_j|^2 + |u^\star|^2|\Qvec_j-\Qvec^\star|^2\right) \\
                                             & \to 0 \quad \text{as }u_j\to u^\star, \Qvec_j\to \Qvec^\star \text{ in } L^2.
        \end{align*}
        Hence, $u_j\Qvec_j\to u^\star \Qvec^\star$ in $L^2(\Omega)$, and further,
    \begin{equation*}
        \begin{aligned}
            u_j\left(\Qvec_j+\frac{\Ivec_d}{d}\right) &\to u^\star \left(\Qvec^\star+\frac{\Ivec_d}{d}\right) && \quad \text{in } L^2(\Omega), \\
            u_j \left(\Qvec_j+\frac{\Ivec_d}{d}\right) \colon \mathcal{D}^2 u_j &\rightharpoonup u^\star\left(\Qvec^\star+\frac{\Ivec_d}{d}\right) \colon \mathcal{D}^2 u^\star && \quad \text{in } L^1(\Omega).
    \end{aligned}
    \end{equation*}
    Therefore, we have
    \begin{align}
             \liminf_{j\to \infty} \int_\Omega \left|\mathcal{D}^2 u_j + q^2 \left(\Qvec_j+\frac{\Ivec_d}{d}\right) u_j\right|^2 
            & =\liminf_{j\to \infty} \int_\Omega \left( \left|\mathcal{D}^2 u_j\right|^2 + 2q^2 u_j \left(\Qvec_j+\frac{\Ivec_d}{d}\right)\colon \mathcal{D}^2 u_j + q^4\left|u_j\left(\Qvec_j+\frac{\Ivec_d}{d}\right)\right|^2 \right) \nonumber\\
            & \ge \int_\Omega \left( \left|\mathcal{D}^2 u^\star\right|^2 + 2q^2 u^\star \left(\Qvec^\star+\frac{\Ivec_d}{d}\right)\colon \mathcal{D}^2 u^\star + q^4\left|u^\star\left(\Qvec^\star+\frac{\Ivec_d}{d}\right)\right|^2 \right) \nonumber\\
            & = \int_\Omega \left|\mathcal{D}^2 u^\star + q^2 \left(\Qvec^\star+\frac{\Ivec_d}{d}\right) u^\star\right|^2.
        \label{eq:lim3}
    \end{align}

    Hence, we can conclude that $\mathcal{J}(u^\star, \Qvec^\star)$ achieves its minimum in the admissible space $\mathcal{A}$ by combining \cref{eq:lim1}, \cref{eq:lim2} and \cref{eq:lim3}.
\end{proof}

\begin{remark}
We briefly compare the proof with that of Bedford \cite[Theorem 5.18]{bedford-2014-phd}. In that work, the admissible space $\mathcal{A}^{BB}$ included a uniaxial constraint.
This assumption is necessary to guarantee the $\mathbf{H}^1$-boundedness property of the minimising sequence $\{\Qvec_j\}$, since that work seeks $\Qvec \in SBV(\Omega,S_0)$ instead of $\mathbf{H}^1$. Enforcing this constraint numerically is difficult~\cite{borthagaray2020}; in this work we prefer the choice $\Qvec \in \mathbf{H}^1$, which enables us to remove the uniaxiality assumption.
\end{remark}

\section{A priori error estimates}
\label{sec:decoupled}

We now consider the discretisation of the minimisation problem \cref{eq:problem}.
For simplicity, we analyse the decoupled case with $q=0$, where \cref{eq:problem} splits into two independent problems:
one for the smectic density variation $u$:
\begin{equation*}
    \underset{u\in H^2\cap H^1_b(\Omega,\mathbb{R})}{\min}\quad \mathcal{J}_1(u) = \int_\Omega \left(B\left|\mathcal{D}^2 u\right|^2 + f_s(u) \right),
\end{equation*}
and one for the tensor field $\Qvec$,
\begin{equation*}
    \underset{\Qvec\in H^1_b(\Omega,S_0)}{\min}\quad \mathcal{J}_2(\Qvec) = \int_\Omega \left( f_n(\Qvec,\nabla \Qvec)\right).
\end{equation*}
Here,
$H^1_b(\Omega,\mathbb{R}) \coloneqq \{u\in H^1(\Omega,\mathbb{R}): u=u_b\text{ on }\partial\Omega\}$
and
$H^1_b(\Omega,S_0) \coloneqq \{\Qvec\in H^1(\Omega,S_0): \Qvec=\Qvec_b\text{ on }\partial\Omega\}$. 
One can derive the following strong forms of their equilibrium equations using integration by parts.
The optimality condition for $u$ yields a fourth-order partial differential equation (PDE)
\begin{equation*}
    \begin{cases}
        2B \nabla \cdot \left(\nabla\cdot\mathcal{D}^2 u\right) +a_1 u + a_2 u^2 +a_3 u^3 = 0 &\text{ in }\Omega,\\
        u=u_b,\quad 
        \mathcal{D}^2 u\cdot \nu=\mathcal{D}^2 u_b\cdot\nu &\text{ on }\partial\Omega,
    \end{cases}
\end{equation*}
where $\nu$ denotes the outward unit normal.
Note that the second boundary condition of $u$ is a natural boundary condition on the second derivative of $u$.
For simplicity, we only consider the case of a cubic nonlinearity (i.e., $a_2=0$) here as the quadratic term can be tackled similarly.
Therefore, we consider the following
governing equations
\begin{equation}
    \label{eq:strong-u}
    (\mathcal{P}1)\quad
    \begin{cases}
        2B \nabla\cdot\left(\nabla\cdot \mathcal{D}^2 u\right) +a_1 u + a_3 u^3 = 0 &\text{in }\Omega,\\
        u= u_b, \quad 
        \mathcal{D}^2 u\cdot \nu=\mathcal{D}^2 u_b\cdot\nu &\text{on }\partial\Omega.
    \end{cases}
\end{equation}

Meanwhile, the optimality condition for $\Qvec$ yields a second-order PDE
\begin{equation}
    \label{eq:strong-Q}
    (\mathcal{P}2) \quad
    \begin{cases}
        d=2\Rightarrow
            - K\Delta \Qvec + 2l\left(2|\Qvec|^2-1\right)\Qvec = 0 &\text{ in }\Omega, \\
        d=3\Rightarrow
            - K\Delta \Qvec + l\left(-\Qvec-|\Qvec|^2+2|\Qvec|^2\Qvec \right) = 0 &\text{ in }\Omega, \\
        \Qvec = \Qvec_b &\text{ on }\partial\Omega,
    \end{cases}
\end{equation}
We now consider these two problems $(\mathcal{P}1)$ and $(\mathcal{P}2)$ in turn.

\begin{remark}
    The uniqueness of solutions is not expected. It is well-known that \cref{eq:strong-Q} can support multiple solutions~\cite{robinson2017}, while numerical experiments in \cite{xia-2021-article} indicate the existence of multiple solutions to the optimality conditions for \cref{eq:problem}.
\end{remark}

\subsection{A priori error estimates for $(\mathcal{P}1)$}
\label{sec:apriori-u}

Since the PDE \cref{eq:strong-u} for the density variation $u$ is a fourth-order problem, a conforming discretisation requires a finite dimensional subspace of the Sobolev space $H^2(\Omega)$, which necessitates the use of $\mathcal{C}^1$-continuous elements.
The construction of these elements is quite involved, particularly in three dimensions; without a special mesh structure, the lowest-degree conforming
elements are the Argyris \cite{argyris-1968-article} and Zhang \cite{zhang-2009-article} elements, of degree 5 and 9 in two and three dimensions respectively.
One approach to avoid such complexity is to use mixed formulations by solving two second-order systems, and we refer to \cite{cheng-2000-article, scholtz-1978-article} for instance.
However, this substantially increases the size of the linear systems to be solved.
Alternatively, one can directly tackle the fourth-order problem with non-conforming elements, that do not satisfy the $\mathcal{C}^1$-requirement.
For instance, the so-called \emph{continuous/discontinuous Galerkin} methods and \emph{$\mathcal{C}^0$ interior penalty} methods ($\mathcal{C}^0$-IP) are analysed in \cite{brenner-2005-article, engel-2002-article}, combining concepts from the theory of continuous and discontinuous Galerkin methods.
Essentially, these methods use $\mathcal{C}^0$-conforming elements and penalise inter-element jumps in first derivatives to weakly enforce $\mathcal{C}^1$-continuity.
This has the advantages of both convenience and efficiency: the weak form is simple, with only minor modifications
from a conforming method, and fewer degrees of freedom are used than with a fully discontinuous Galerkin method.

We thus adopt the idea of $\mathcal{C}^0$-IP methods to solve the nonlinear fourth-order problem $(\mathcal{P}1)$ and derive a priori error estimates regarding $u$.
We adapt the techniques of \cite{maity-2020a-article} to prove convergence rates with the use of familiar continuous Lagrange elements for the problem $(\mathcal{P}1)$.
The weak form of \cref{eq:strong-u} is defined as: find $u\in H^2(\Omega)\cap H^1_b(\Omega; \mathbb{R})$ such that
\begin{equation}
    \label{eq:weak-uorig}
    \mathcal{N}^s(u)t \coloneqq A^s(u,t) + B^s(u,u,u,t) + C^s(u,t)= L^s(t) \quad \forall t\in H^2(\Omega)\cap H^1_0(\Omega),
\end{equation}
where for $t,w\in H^2(\Omega)$,
    \begin{equation*}
        A^s(t,w) = 2B \int_\Omega \mathcal{D}^2 t\colon \mathcal{D}^2 w,\
        C^s(t,w) = a_1 \int_\Omega tw,\
        L^s(t) \coloneqq 2B\int_{\partial\Omega} \left(\mathcal{D}^2 u_b \cdot\nabla t \right)\cdot\nu,
    \end{equation*}
and for $\mu,\zeta,\eta,\xi\in H^2(\Omega)$,
\begin{equation*}
    B^s(\mu, \zeta, \eta, \xi) = a_3 \int_\Omega \mu \zeta \eta \xi.
\end{equation*}
Since \cref{eq:weak-uorig} is nonlinear, we derive its linearisation: find $v\in H^2(\Omega) \cap H^1_0(\Omega)$ such that
\begin{equation}
    \label{eq:linear-uorig}
    \langle \mathcal{D}\mathcal{N}^s(u)v,w\rangle_{H^2} \coloneqq A^s(v, w) + 3B^s(u,u,v,w) + C^s(v,w) = L^s(w) \quad \forall w\in H^2(\Omega)\cap H^1_0(\Omega),
\end{equation}
where $\langle \cdot,\cdot\rangle_{H^2}$ represents the dual pairing between $\left(H^2(\Omega)\cap H^1_0(\Omega)\right)^\star$ and $H^2(\Omega)\cap H^1_0(\Omega)$.

It is straightforward to derive the coercivity and boundedness of the bilinear operator $A^s(\cdot,\cdot)$ with the semi-norm $|\cdot|_2$ (in fact, this is indeed a norm in $H^2(\Omega)\cap H^1_0(\Omega)$).

\begin{lemma}
    \label{lem:As-coer-bound}
    For $v,w\in H^2(\Omega)\cap H^1_0(\Omega)$, there holds
    $
        A^s(v,w) \lesssim |v|_2 |w|_2 \text{ and } A^s(v,v)\gtrsim |v|_2^2.
    $
\end{lemma}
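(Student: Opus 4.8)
The plan is to note that both inequalities are essentially immediate from the definition $A^s(t,w) = 2B\int_\Omega \mathcal{D}^2 t\colon \mathcal{D}^2 w$ together with the Cauchy--Schwarz inequality, recalling that $|v|_2 = \|\mathcal{D}^2 v\|_{L^2(\Omega)}$. For boundedness, I would apply Cauchy--Schwarz pointwise in the Frobenius inner product $\mathcal{D}^2 v\colon \mathcal{D}^2 w$ and then once more in $L^2(\Omega)$, obtaining $A^s(v,w) \le 2B\,\|\mathcal{D}^2 v\|_{L^2(\Omega)}\|\mathcal{D}^2 w\|_{L^2(\Omega)} = 2B\,|v|_2|w|_2$, so the hidden constant is simply $2B$. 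For coercivity, the statement is in fact an identity: $A^s(v,v) = 2B\int_\Omega |\mathcal{D}^2 v|^2 = 2B\,|v|_2^2$, so the claim holds with constant $2B$.

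The only substantive point, which justifies the parenthetical remark that $|\cdot|_2$ is a genuine norm on $X \coloneqq H^2(\Omega)\cap H^1_0(\Omega)$ (and hence that the coercivity estimate is meaningful on that space), is a Poincar\'e-type bound $\|v\|_{H^2(\Omega)} \lesssim |v|_2$ for $v\in X$. I would prove this in two short steps. First, since $v\in H^1_0(\Omega)$, integration by parts gives $\|\nabla v\|_{L^2(\Omega)}^2 = -\int_\Omega v\,\Delta v \le \|v\|_{L^2(\Omega)}\|\Delta v\|_{L^2(\Omega)} \le \sqrt{d}\,\|v\|_{L^2(\Omega)}\,|v|_2$, where $\|\Delta v\|_{L^2(\Omega)}\le\sqrt{d}\,|v|_2$ follows from $(\Delta v)^2 \le d\,|\mathcal{D}^2 v|^2$ pointwise. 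Combining this with the standard Poincar\'e inequality $\|v\|_{L^2(\Omega)}\lesssim\|\nabla v\|_{L^2(\Omega)}$ (again using $v\in H^1_0(\Omega)$ and boundedness of $\Omega$) yields $\|\nabla v\|_{L^2(\Omega)}\lesssim|v|_2$ and then $\|v\|_{L^2(\Omega)}\lesssim|v|_2$; together these give $\|v\|_{H^2(\Omega)}\lesssim|v|_2$. Alternatively, one may invoke a compactness (Peetre--Tartar) argument: $\ker\mathcal{D}^2$ consists of affine functions, the only affine function lying in $H^1_0(\Omega)$ is $0$, and hence $|\cdot|_2$ is equivalent to $\|\cdot\|_{H^2(\Omega)}$ on $X$.

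Since the two required estimates reduce to Cauchy--Schwarz and an algebraic identity, I do not anticipate any real obstacle here; the lemma is at the level of a routine observation. The mild care needed is only in the norm-equivalence verification above, which is why the statement is phrased as a brief lemma with the norm claim relegated to a parenthetical remark.
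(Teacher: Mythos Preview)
Your proposal is correct and matches the paper's intent: the paper does not give a proof of this lemma at all, merely remarking beforehand that ``it is straightforward to derive the coercivity and boundedness of the bilinear operator $A^s(\cdot,\cdot)$ with the semi-norm $|\cdot|_2$ (in fact, this is indeed a norm in $H^2(\Omega)\cap H^1_0(\Omega)$).'' Your argument via Cauchy--Schwarz for boundedness and the observation that coercivity is an identity is exactly the routine computation the authors have in mind, and your additional justification of the norm equivalence on $H^2\cap H^1_0$ fills in the parenthetical claim they assert without proof.
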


Let $\mathcal{T}_h$ be a mesh of $\Omega$ with $T$ denoting an element, and let $\mathcal{E}_I$ (resp.\ $\mathcal{E}_B$) denote the set of all interior (resp.\ boundary) edges/faces $e$ of the mesh, and $\mathcal{E}\coloneqq \mathcal{E}_I\cup \mathcal{E}_B$.
Define the broken Sobolev space
$
    H^2(\mathcal{T}_h) \coloneqq \{v\in H^1(\Omega): v|_{T}\in H^2(T) \ \forall T\in \mathcal{T}_h\}
$
equipped with the broken norm $\|v\|_{2,\mathcal{T}_h}^2=\sum_{T\in \mathcal{T}_h} \|v\|^2_{2,T}$.
We take a nonconforming but continuous approximation $u_h$ for the solution $u$ of \cref{eq:weak-uorig}, that is to say, $u_h\in W_{h,b} \subset H^2(\mathcal{T}_h)\cap H^1_b(\Omega)$ defined for $\deg \ge 2$ (since $(\mathcal{P}1)$ is a fourth-order problem) by
\begin{equation*}
    \begin{aligned}
        W_{h} &\coloneqq \{ v\in H^2(\mathcal{T}_h) \cap H^1(\Omega): v\in \mathbb{Q}_{\deg}(T)\ \forall T\in \mathcal{T}_h\},\\
        W_{h,0} &\coloneqq \{ v\in H^2(\mathcal{T}_h) \cap H^1(\Omega): v=0\text{ on }\partial\Omega, v\in \mathbb{Q}_{\deg} (T)\ \forall T\in \mathcal{T}_h\},\\
    	W_{h,b} &\coloneqq \{ v\in H^2(\mathcal{T}_h) \cap H^1(\Omega): v=u_b\text{ on }\partial\Omega, v\in \mathbb{Q}_{\deg} (T)\ \forall T\in \mathcal{T}_h\},
    \end{aligned}
\end{equation*}
with $\mathbb{Q}_{\deg}$ denoting piecewise continuous polynomials of degree ${\deg}$ on a mesh of quadrilaterals or hexahedra.
Following the derivation of the $\mathcal{C}^0$-IP formulation given in \cite[Section 3]{brenner-2011-book}, we introduce the discrete nonlinear weak form: find $u_h\in W_{h,b}$ such that
\begin{equation}
    \label{eq:weak-u}
    \mathcal{N}_h^s(u_h)t_h \coloneqq A^s_h(u_h,t_h) +P^s_h(u_h,t_h)+ B^s(u_h,u_h,u_h,t_h) + C^s(u_h,t_h)
        = L^s(t_h) \quad \forall t_h\in W_{h,0},
\end{equation}
where for all $u_h,t_h\in W_h$,
\begin{equation*}
        A^s_h(u_h,t_h) \coloneqq 2B\bigg( \sum_{T\in\mathcal{T}_h} \int_T \mathcal{D}^2 u_h \colon \mathcal{D}^2 t_h
        - \sum_{e\in \mathcal{E}_I}\int_e \llbrace \frac{\partial^2 u_h}{\partial\nu^2}\rrbrace \llbracket \nabla t_h\rrbracket
    - \sum_{e\in \mathcal{E}_I} \int_e \llbrace \frac{\partial^2 t_h}{\partial\nu^2}\rrbrace \llbracket \nabla u_h\rrbracket \bigg),
\end{equation*}
and
\begin{equation}
    \label{eq:penalty}
    P^s_h(u_h,t_h) \coloneqq
    \sum_{e\in \mathcal{E}_I}\frac{2B \epsilon}{h_e^3}\int_{e} \llbracket \nabla u_h \rrbracket \llbracket \nabla t_h \rrbracket.
\end{equation}
Here, $\epsilon$ is the penalty parameter (to be specified in \cref{sec:num}),
$h_e$ indicates the size of the edge/face $e$
and the average
of the second derivatives of $u$ along the normal direction across the edge/facet $e$ is defined as
    $
        \llbrace \frac{\partial^2 u}{\partial \nu^2} \rrbrace = \frac{1}{2} \left( \left.\frac{\partial^2 u_+}{\partial \nu^2}\right|_e + \left.\frac{\partial^2 u_-}{\partial \nu^2}\right|_e \right).
    $
For any interior edge $e\in \mathcal{E}_I$ shared by two cells $T_-$ and $T_+$, we define the jump $\llbracket \mathbf{v}\rrbracket$ by $\llbracket \mathbf{v}\rrbracket = \mathbf{v}_-\cdot \nu_- + \mathbf{v}_+\cdot \nu_+$ with $\nu_-,\nu_+$ representing the restriction of outward normals in $T_-,T_+$ respectively.
On the boundary edge/face $e\in \mathcal{E}_B$, we define $\llbracket \mathbf{v}\rrbracket=\mathbf{v}\cdot\nu$.
    The operator $P^s_h$ penalises the first derivatives across the interior edge/facet since the function in $H^1(\Omega)$ is not necessarily continuously differentiable.

The linearised version is to seek $v_h\in W_{h,0}$ such that
\begin{equation}
    \label{eq:linprob-uh}
    \langle \mathcal{D}\mathcal{N}_h^s(u_h)v_h,w_h\rangle = L^s(w_h)\quad \forall w_h\in W_{h,0},
\end{equation}
where
\begin{equation}
    \label{eq:linear-uh}
    \langle \mathcal{D}\mathcal{N}_h^s(u_h)v_h,w_h\rangle
    \coloneqq A_h^s(v_h, w_h) + P_h^s(v_h,w_h) + 3B^s(u_h,u_h,v_h,w_h) + C^s(v_h,w_h).
\end{equation}
We also define the mesh-dependent $H^2$-like semi-norm for $v\in W_h$,
\begin{equation}
    \label{eq:mesh-norm}
    \vertiii{v}_h^2 \coloneqq \sum_{T\in \mathcal{T}_h} |v|_{H^2(T)}^2 + \sum_{e\in \mathcal{E}_I} \int_e \frac{1}{h_e^3} |\llbracket \nabla v \rrbracket |^2.
\end{equation}
Note that $\vertiii{\cdot}_h$ is indeed a norm on $W_{h,0}$.
This norm will be used in the well-posedness and convergence analysis below.

We first give an immediate result about the consistency of the discrete form \cref{eq:weak-u}.

\begin{theorem}
    \label{thm:consistency}
    (Consistency)
    Assuming that $u\in H^4(\Omega)$.
    The solution $u$ of the continuous weak form \cref{eq:weak-uorig} solves the discrete weak problem \cref{eq:weak-u}.
\end{theorem}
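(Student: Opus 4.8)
The plan is to establish the Galerkin consistency of the $\mathcal{C}^0$-IP scheme: show that inserting the exact solution $u$ into $\mathcal{N}_h^s(\cdot)$ reproduces $L^s(t_h)$ for every $t_h\in W_{h,0}$. The extra regularity $u\in H^4(\Omega)$ is precisely what is needed: it forces all interelement jumps of $u$ and its derivatives to vanish, so that the element-boundary terms generated by integration by parts match exactly the consistency terms built into $A^s_h$, while the boundary terms reproduce $L^s$ via the natural boundary condition.

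First I would start from the strong form $(\mathcal{P}1)$ in \cref{eq:strong-u}, multiply the PDE by an arbitrary $t_h\in W_{h,0}$, integrate over each element $T\in\mathcal{T}_h$, and integrate by parts twice on each $T$. Since $u\in H^4(\Omega)$ we have $u|_T\in H^4(T)$, so each $\nabla\cdot\mathcal{D}^2 u|_T$ and $\mathcal{D}^2 u|_T$ have $L^2$ traces on $\partial T$ and the manipulations are legitimate. Summing over $T$ produces the volume term $2B\sum_{T}\int_T \mathcal{D}^2 u\colon\mathcal{D}^2 t_h$, plus element-boundary contributions of two types, $2B\sum_{T}\int_{\partial T}(\mathcal{D}^2 u\cdot\nu)\cdot\nabla t_h$ and $-2B\sum_{T}\int_{\partial T}\big((\nabla\cdot\mathcal{D}^2 u)\cdot\nu\big)\,t_h$, together with $C^s(u,t_h)+B^s(u,u,u,t_h)$ coming from the lower-order terms of the PDE.

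Next I would collapse the boundary sums using the regularity of $u$ and the continuity of $t_h$. Because $u\in H^4(\Omega)\subset H^2(\Omega)$, the trace $\llbracket\nabla u\rrbracket$ vanishes on every interior edge/face $e\in\mathcal{E}_I$; this immediately kills the penalty term $P^s_h(u,t_h)$ in \cref{eq:penalty} and the $\llbrace\partial^2 t_h/\partial\nu^2\rrbrace\llbracket\nabla u\rrbracket$ term in $A^s_h$. For the term $\sum_T\int_{\partial T}(\mathcal{D}^2 u\cdot\nu)\cdot\nabla t_h$, on each $e\in\mathcal{E}_I$ I would decompose $\nabla t_h$ into its normal and tangential parts: since $t_h\in H^1(\Omega)$ is continuous, its tangential gradient is single-valued across $e$, so the tangential contributions of the two incident elements cancel (their outward normals being opposite), while the normal contributions combine — using that $\mathcal{D}^2 u$ is single-valued and that $\nu^{T}\mathcal{D}^2 u\,\nu=\partial^2u/\partial\nu^2$ is quadratic in $\nu$ — into $\sum_{e\in\mathcal{E}_I}\int_e\llbrace\partial^2 u/\partial\nu^2\rrbrace\llbracket\nabla t_h\rrbracket$, exactly the term subtracted in the definition of $A^s_h$. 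For the term $-2B\sum_T\int_{\partial T}\big((\nabla\cdot\mathcal{D}^2 u)\cdot\nu\big)\,t_h$, the interior contributions cancel because $t_h$ and $\nabla\cdot\mathcal{D}^2 u$ are single-valued and $\nu$ flips sign, and the $\mathcal{E}_B$ part vanishes because $t_h=0$ on $\partial\Omega$. On $\mathcal{E}_B$ the remaining first-type term is $2B\int_{\partial\Omega}(\mathcal{D}^2 u\cdot\nu)\cdot\nabla t_h$, and invoking the natural boundary condition $\mathcal{D}^2 u\cdot\nu=\mathcal{D}^2 u_b\cdot\nu$ together with the symmetry of the Hessian rewrites it as exactly $L^s(t_h)$.

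Assembling these identities gives $A^s_h(u,t_h)+P^s_h(u,t_h) = 2B\int_\Omega\big(\nabla\cdot(\nabla\cdot\mathcal{D}^2 u)\big)t_h + L^s(t_h)$; substituting the PDE $2B\,\nabla\cdot(\nabla\cdot\mathcal{D}^2 u) = -a_1 u - a_3 u^3$ turns the volume integral into $-C^s(u,t_h)-B^s(u,u,u,t_h)$, whence $\mathcal{N}_h^s(u)t_h = A^s_h(u,t_h)+P^s_h(u,t_h)+B^s(u,u,u,t_h)+C^s(u,t_h) = L^s(t_h)$ for all $t_h\in W_{h,0}$, which is the claim. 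The one genuinely delicate point is the edge bookkeeping in the third step — the normal/tangential split of $\nabla t_h$ and the careful tracking of outward-normal orientations — needed to see that the element-boundary terms produced by integration by parts coincide with the consistency terms of $A^s_h$; everything else is routine.
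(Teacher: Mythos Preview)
Your proposal is correct and follows essentially the same approach as the paper: elementwise double integration by parts of the fourth-order term, use of $u\in H^4(\Omega)$ to make $\llbracket\nabla u\rrbracket$ vanish so that the symmetrising and penalty terms of $A^s_h$, $P^s_h$ can be added for free, and identification of the remaining element-boundary terms with the consistency term and $L^s(t_h)$. If anything, your treatment is more explicit than the paper's---you spell out the normal/tangential decomposition of $\nabla t_h$ on interior faces and track the $\partial\Omega$ contribution that produces $L^s(t_h)$, whereas the paper compresses all of this into a single displayed identity.
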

\begin{proof}
    Multiplying the fourth-order term $2B\nabla\cdot(\nabla\cdot (\mathcal{D}^2 u))$ in \cref{eq:strong-u} with a test function $t\in W_{h,0}$ and using piecewise integration by parts with the boundary condition specified in \cref{eq:strong-u} for $u$, one can obtain
    \begin{equation}
        \label{eq:ibp}
        2B \sum_{T\in \mathcal{E}_h} \int_T \nabla\cdot(\nabla\cdot(\mathcal{D}^2 u)) t = 2B \sum_{T\in \mathcal{E}_h} \int_T \mathcal{D}^2 u\colon \mathcal{D}^2 t
        - 2B \sum_{e\in\mathcal{E}_I} \int_e \llbrace \frac{\partial^2 u}{\partial\nu^2}\rrbrace \llbracket \nabla t\rrbracket.
    \end{equation}
    Since $u\in H^4(\Omega)$ implies $\nabla u$ is continuous on the whole domain $\Omega$, the jump term $\llbracket \nabla u \rrbracket$ then becomes zero and we can thus symmetrise and penalise the form \cref{eq:ibp}.
    This leads to the presence of $A^s_h(u,t)+P^s_h(u,t)$.
    The remaining terms involving $B^s$ and $C^s$ are straightforward as one takes the test function $t\in W_{h,0}$.
    Therefore, $u$ satisfies \cref{eq:weak-u}.
\end{proof}


By noting that
$
    \left(\mathcal{D}^2\colon \mathcal{D}^2\right)u = \left[ \left(\partial_x^2\right)^2 +
    \left(\partial_y^2\right)^2 + 2\left(\partial_{xy}^2 \right)^2\right] u
    = \Delta^2 u,
$
it is natural to extend the classical elliptic regularity result \cite{blum-1980-article} for the biharmonic operator $\Delta^2$ to the case of the bi-Hessian operator $\mathcal{D}^2\colon \mathcal{D}^2$.
If the domain $\Omega$ is smooth, the weak solutions of the biharmonic problem (e.g., \cite[Example 2]{brenner-2011-book}) belong to $H^4(\Omega)$ by classical elliptic regularity results and thus we make this assumption henceforth.

Moreover, to facilitate the analysis, we further assume that $u$ is an isolated solution, i.e., there is only one solution $u$ satisfying \cref{eq:strong-u} within a sufficiently small ball $\{v\in H^2(\Omega)\cap H^1_0(\Omega): |v-u|_2\le r_b\}$ with radius $r_b$.
These assumptions then imply that the linearised operator $\langle \mathcal{D}\mathcal{N}^s(u)\cdot,\cdot\rangle_{H^2}$ satisfies the following inf-sup condition:
\begin{equation}
    \label{eq:inf-sup-DN-smec}
    0<\beta_u = \adjustlimits\inf_{\underset{|v|_2=1}{v\in H^2(\Omega)\cap H^1_0(\Omega)}} \sup_{\underset{|w|_2=1}{w\in H^2(\Omega)\cap H^1_0(\Omega)}} \langle \mathcal{D}\mathcal{N}^s(u)v,w\rangle_{H^2}
    =  \adjustlimits\inf_{\underset{|w|_2=1}{w\in H^2(\Omega)\cap H^1_0(\Omega)}} \sup_{\underset{|v|_2=1}{v\in H^2(\Omega)\cap H^1_0(\Omega)}} \langle \mathcal{D}\mathcal{N}^s(u)v,w\rangle_{H^2}.
\end{equation}

\subsubsection{Well-posedness of the discrete form}

Recalling \cite[Eq.\ (3.20)]{brenner-2011-book}, we can obtain for $v,w\in W_{h,0}$,
\begin{equation}
    \label{eq:est-average}
    \sum_{e\in \mathcal{E}_I} \left| \int_e \llbrace \frac{\partial^2 w}{\partial\nu^2}\rrbrace \llbracket \nabla v \rrbracket \right| \lesssim
    \left( \sum_{T\in\mathcal{T}_h} \int_T \mathcal{D}^2 w\colon \mathcal{D}^2 w \right)^{1/2}
    \left( \sum_{e\in \mathcal{E}_I} \frac{1}{h_e^3} \int_e (\llbracket \nabla v\rrbracket )^2 \right)^{1/2},
\end{equation}
as the edge/facet size $h_e<1$.
With the estimate \cref{eq:est-average} at hand, we then apply the Cauchy--Schwarz inequality and use the definition \cref{eq:mesh-norm} of $\vertiii{\cdot}_h$ to obtain the boundedness of $A^s_h(\cdot,\cdot)$ and $P_h^s(\cdot,\cdot)$.
We omit the details of the proofs here and only illustrate the boundedness result for $B^s(\cdot,\cdot,\cdot,\cdot)$ and $C^s(\cdot,\cdot)$ below.
\begin{lemma}
    \label{lem:bound-BandC}
    (Boundedness of $B^s(\cdot,\cdot,\cdot,\cdot)$ and $C^s(\cdot,\cdot)$.)
    For $u,v,w,p \in W_{h,0}$, we have
\begin{equation}
    \label{eq:bdd-Bs-h}
    |B^s(u,v,w,p)| \lesssim \vertiii{u}_h \vertiii{v}_{h} \vertiii{w}_{h}\vertiii{p}_{h} \text{ and }
        |C^s(u,v)| \lesssim \vertiii{u}_{h}\vertiii{v}_{h}.
\end{equation}
For $u,v\in H^2(\Omega)$, $w,p\in W_h$,
\begin{equation}
    \label{eq:bdd-Bs-H2}
    |B^s(u,v,w,p)| \lesssim \| u\|_2 \| v\|_{2} \vertiii{w}_{h}\vertiii{p}_{h}.
\end{equation}
\end{lemma}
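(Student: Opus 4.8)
The plan is to exploit that both $B^s$ and $C^s$ are multilinear forms built from plain Lebesgue integrals of products, so that, via the generalised Hölder inequality, every bound reduces to controlling a few $L^p$-norms ($p\in\{2,4,\infty\}$) by the norms on the right-hand side. Two ingredients will do all the work: (i) the continuous Sobolev embedding $H^2(\Omega)\hookrightarrow L^\infty(\Omega)$ (and a fortiori $H^1(\Omega)\hookrightarrow L^4(\Omega)$), valid since $d\in\{2,3\}$ and $\Omega$ is bounded; and (ii) a discrete Sobolev--Poincaré inequality $\|v\|_{L^q(\Omega)}\lesssim\vertiii{v}_h$ for every $v\in W_{h,0}$, uniformly in $h$, for $q\in[1,\infty)$ when $d=2$ and $q\in[1,6]$ when $d=3$; in particular $\|v\|_{L^4(\Omega)}+\|v\|_{L^2(\Omega)}\lesssim\vertiii{v}_h$.

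For \cref{eq:bdd-Bs-h} I would take $u,v,w,p\in W_{h,0}$ and apply Hölder's inequality with four $L^4$ factors, $|B^s(u,v,w,p)|=a_3\bigl|\int_\Omega uvwp\bigr|\le a_3\|u\|_{L^4}\|v\|_{L^4}\|w\|_{L^4}\|p\|_{L^4}$, then use (ii) on each factor to obtain $|B^s(u,v,w,p)|\lesssim\vertiii{u}_h\vertiii{v}_h\vertiii{w}_h\vertiii{p}_h$; and for $C^s$, Cauchy--Schwarz gives $|C^s(u,v)|=a_1\bigl|\int_\Omega uv\bigr|\le a_1\|u\|_{L^2}\|v\|_{L^2}\lesssim\vertiii{u}_h\vertiii{v}_h$, again by (ii). For \cref{eq:bdd-Bs-H2}, now $u,v\in H^2(\Omega)$, so (i) yields $\|u\|_{L^\infty}\lesssim\|u\|_2$ and $\|v\|_{L^\infty}\lesssim\|v\|_2$, and then $|B^s(u,v,w,p)|=a_3\bigl|\int_\Omega uvwp\bigr|\le a_3\|u\|_{L^\infty}\|v\|_{L^\infty}\|w\|_{L^2}\|p\|_{L^2}\lesssim\|u\|_2\|v\|_2\vertiii{w}_h\vertiii{p}_h$, where the last step applies (ii) to $w,p$; here one should read $w,p$ as elements of $W_{h,0}$ (which is all that the error analysis needs), since $\vertiii{\cdot}_h$ is only a seminorm, not a norm, on the full space $W_h$ — it vanishes on constants.

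The only genuinely non-routine point, and hence the main obstacle, is ingredient (ii). Since each $v\in W_{h,0}$ is continuous and vanishes on $\partial\Omega$, it lies in $H^1_0(\Omega)$, so by the classical Poincaré inequality together with the continuous embedding $H^1(\Omega)\hookrightarrow L^q(\Omega)$ it suffices to prove $\|\nabla v\|_{L^2(\Omega)}\lesssim\vertiii{v}_h$. I would obtain this by applying a Poincaré--Friedrichs inequality for piecewise $H^1$ functions (in the spirit of Brenner's broken Sobolev estimates, cf.\ \cref{eq:est-average}) to the vector field $\nabla v$: its elementwise $H^1$-seminorms are precisely $|v|_{H^2(T)}$, and since $v$ is globally continuous the tangential component of $\llbracket\nabla v\rrbracket$ vanishes, so only the normal jumps — which are controlled in $\vertiii{\cdot}_h$ — enter, with the weight $h_e^{-3}$ dominating the weaker weight $h_e^{-1}$ that the piecewise-$H^1$ Poincaré inequality requires (this uses $h_e<1$). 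This step is where mesh shape-regularity is invoked; everything else is Hölder and the already-available continuous embeddings, which is why the lemma is regarded as routine.
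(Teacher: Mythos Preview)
Your proposal is correct and follows essentially the same route as the paper: Hölder (four $L^4$ factors for $B^s$, Cauchy--Schwarz for $C^s$), the continuous embedding $H^1\hookrightarrow L^4$ together with $|\cdot|_1$ being a norm on $H^1_0$, and then the Brenner-type Poincaré inequality for piecewise $H^2$ functions to pass from $|v|_1$ to $\vertiii{v}_h$; for \cref{eq:bdd-Bs-H2} the paper likewise invokes $H^2\hookrightarrow L^\infty$ and Cauchy--Schwarz. Your remark that $\vertiii{\cdot}_h$ is only a seminorm on $W_h$, so that the bound for $w,p$ really needs $W_{h,0}$, is a valid caveat that the paper glosses over.
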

\begin{proof}
    By H\"older's inequality, the Sobolev embedding $H^1(\Omega)\hookrightarrow L^4(\Omega)$, and the fact that the $H^1$ semi-norm $|\cdot|_1$ is a norm in $H^1_0(\Omega)$, we deduce
    \begin{equation*}
        \begin{aligned}
            |B^s(u,v,w,p)| &\lesssim \|u\|_{L^4}\|v\|_{L^4}\|w\|_{L^4}\|p\|_{L^4}\\
                             &\lesssim |u|_1 |v|_1 |w|_1 |p|_1.
        \end{aligned}
    \end{equation*} 
    It then follows from a Poincar\'e inequality \cite[Eq.\ (5.7)]{brenner-2004-article} for piecewise $H^2$ functions that
    \begin{equation}
        \label{eq:boundH1byH2}
        \sum_{T\in\mathcal{T}_h} |v|^2_{1,T} \lesssim \sum_{T\in\mathcal{T}_h} |v|^2_{2,T} + \sum_{e\in\mathcal{E}_I} \frac{1}{h_e^3} \|\llbracket \nabla v\rrbracket\|^2_{0,e}=\vertiii{v}^2_h \quad\forall v\in W_{h,0}.
    \end{equation}
    Thus, we obtain
    $
        |B^s(u,v,w,p)| \lesssim \vertiii{u}_h \vertiii{v}_{h} \vertiii{w}_{h}\vertiii{p}_{h}.
    $

    The boundedness of $C^s(\cdot,\cdot)$ follows similarly by the Cauchy--Schwarz inequality, the Sobolev embedding $H^1(\Omega)\hookrightarrow L^2(\Omega)$ and the use of \cref{eq:boundH1byH2}.
    The proof of \cref{eq:bdd-Bs-H2} is analogous to that of \cref{eq:bdd-Bs-h} with a use of the embedding result $H^2(\Omega)\hookrightarrow L^\infty(\Omega)$ and the Cauchy--Schwarz inequality.
\end{proof}

We give the coercivity result for the bilinear form $\left(A^s_h(\cdot,\cdot)+P^s_h(\cdot,\cdot)\right)$.

\begin{lemma}
    (Coercivity of $A^s_h+P^s_h$)
    For a sufficiently large penalty parameter $\epsilon$, there holds
    \begin{equation}
        \label{eq:coer-As-Ps}
        \vertiii{v_h}_h^2 \lesssim A^s_h(v_h,v_h)+P^s_h(v_h,v_h)\quad \forall v_h\in W_{h,0}.
    \end{equation}
\end{lemma}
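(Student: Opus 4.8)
The plan is to expand $A^s_h(v_h,v_h)+P^s_h(v_h,v_h)$ into its elementwise Hessian part, its (symmetric) consistency part and its penalty part, and then to absorb the indefinite consistency part into the other two by means of the Cauchy--Schwarz-type bound \cref{eq:est-average} followed by Young's inequality, the price being a lower threshold on $\epsilon$.

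Concretely, I would set
\[
    a^2 \coloneqq \sum_{T\in\mathcal{T}_h}\int_T \mathcal{D}^2 v_h : \mathcal{D}^2 v_h,
    \qquad
    b^2 \coloneqq \sum_{e\in\mathcal{E}_I}\frac{1}{h_e^3}\int_e \big|\llbracket \nabla v_h\rrbracket\big|^2,
\]
so that $\vertiii{v_h}_h^2 = a^2+b^2$ by \cref{eq:mesh-norm}. The two consistency terms in $A^s_h$ coincide when both arguments equal $v_h$, and $P^s_h(v_h,v_h) = 2B\epsilon\, b^2$, hence
\[
    A^s_h(v_h,v_h)+P^s_h(v_h,v_h)
    = 2B\left( a^2 - 2\sum_{e\in\mathcal{E}_I}\int_e \llbrace \tfrac{\partial^2 v_h}{\partial\nu^2}\rrbrace \llbracket \nabla v_h\rrbracket + \epsilon\, b^2 \right).
\]
By \cref{eq:est-average} there is a mesh-independent constant $C_0$ with $\big|\sum_{e\in\mathcal{E}_I}\int_e \llbrace \tfrac{\partial^2 v_h}{\partial\nu^2}\rrbrace\llbracket\nabla v_h\rrbracket\big| \le C_0\, a b$, and Young's inequality gives $2C_0 ab \le \tfrac12 a^2 + 2C_0^2 b^2$. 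Thus the bracket is at least $\tfrac12 a^2 + (\epsilon - 2C_0^2) b^2$, and choosing $\epsilon \ge 2C_0^2 + \tfrac12$ makes it $\ge \tfrac12(a^2+b^2) = \tfrac12\vertiii{v_h}_h^2$, so that $A^s_h(v_h,v_h)+P^s_h(v_h,v_h) \ge B\vertiii{v_h}_h^2$, which is \cref{eq:coer-As-Ps}.

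The argument is essentially routine once \cref{eq:est-average} is available; the only point needing care is that the constant $C_0$ there --- which traces back to scaled trace and inverse estimates on the reference cell --- be independent of $h$ and of $v_h$, so that the threshold on $\epsilon$ is a genuine fixed number and does not degenerate under refinement. This is precisely what \cref{eq:est-average} delivers (imported from \cite[Eq.~(3.20)]{brenner-2011-book}), using the shape-regularity of $\mathcal{T}_h$ and the fact that the local spaces are $\mathbb{Q}_{\deg}$. I would also remark that the same computation, combined with the boundedness of $A^s_h$ and $P^s_h$ noted just above the lemma, shows that $A^s_h+P^s_h$ is in fact equivalent to $\vertiii{\cdot}_h^2$ on $W_{h,0}$; this two-sided control is what the subsequent well-posedness analysis of the linearised discrete problem \cref{eq:linprob-uh} will rely on.
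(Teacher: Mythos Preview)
Your proof is correct and follows essentially the same approach as the paper's own argument: both expand $A^s_h(v_h,v_h)+P^s_h(v_h,v_h)$, invoke the trace-type bound \cref{eq:est-average} to control the consistency term by $C\,ab$, and then apply Young's inequality (the paper phrases it as the AM--GM inequality) to absorb this into the Hessian and penalty parts for $\epsilon$ large enough. The constants and threshold on $\epsilon$ differ only by how the factor of two from the symmetric consistency terms is absorbed into the generic constant.
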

\begin{proof}
By \cref{eq:est-average} and the inequality of geometric and arithmetic means, we deduce for $v\in W_{h}$,
    \begin{align*}
            A^s_h(v,v)+P^s_h(v,v) &\geq
                2B\sum_{T\in \mathcal{T}_h} |v|_{H^2(T)}^2
                - 2BC \left( \sum_{T\in\mathcal{T}_h} |v|^2_{2,T}\right)^{1/2} \left( \sum_{e\in\mathcal{E}_I} \frac{1}{h_e^3} \| \llbracket \nabla v\rrbracket \|^2_{0,e} \right)^{1/2}
                                                             + 2B \left(\sum_{e\in \mathcal{E}_I} \int_e \frac{\epsilon}{h_e^3} |\llbracket \nabla v \rrbracket |^2\right)\\
                 &\geq
                 2B \left[ \frac{1}{2} \sum_{T\in \mathcal{T}_h} |v|_{H^2(T)}^2
                 + \left(\epsilon - \frac{C^2}{2}\right) \sum_{e\in\mathcal{E}_I} \frac{1}{h_e^3} \|\llbracket \nabla v\rrbracket \|^2_{0,e} \right]
                 \geq B \vertiii{v}_h^2,
    \end{align*}
    provided the penalty parameter $\epsilon$ is sufficiently large with the generic constant $C$ from \cref{eq:est-average}.
\end{proof}

An important question about the well-posedness is the coercivity of the bilinear operator $\langle\mathcal{D}\mathcal{N}^s_h(u_h)\cdot,\cdot\rangle$.
Due to the presence of $B^s$ and $C^s$ terms in $\langle\mathcal{D}\mathcal{N}^s_h(u_h)\cdot,\cdot\rangle$, it is not trivial to derive its coercivity.
We first discuss the weak coercivity of the bilinear form $\langle\mathcal{D}\mathcal{N}^s_h(u)\cdot,\cdot\rangle$ defined as
\begin{equation}
    \langle\mathcal{D}\mathcal{N}^s_h(u)v_h,w_h\rangle \coloneqq A^s_h(v_h,w_h) + P^s_h(v_h,w_h) + 3B^s(u,u,v_h,w_h) + C^s(v_h,w_h) \quad \forall v_h,w_h\in W_{h,0}.
\end{equation}
To this end, we will employ the enrichment operator $E_h: W_h\rightarrow W_C\subset H^2(\Omega)$ with $W_C$ being the Hsieh--Clough--Tocher macro finite element space \cite{brenner-2011-book}.
The following lemma is adapted to our notation and definition of $\vertiii{\cdot}_h$ from \cite[Lemma 1]{brenner-2011-book}.

\begin{lemma}
    \label{lem:enrichment}
    \cite[Lemma 1]{brenner-2011-book}
    For $v_h\in W_{h,0}$, there holds
    \begin{equation*}
        \sum_{T\in\mathcal{T}_h} \left( h^{-4} \|v_h-E_h v_h\|^2_{L^2(T)} + h^{-2}|v_h-E_hv_h|^2_{H^1(T)} + |v_h-E_hv_h|^2_{H^2(T)} \right)
                                \lesssim \sum_{e\in \mathcal{E}_I} \frac{1}{h_e^3}\|\llbracket \nabla v_h\rrbracket \|^2_{L^2(e)} \lesssim \vertiii{v_h}_h^2.
\end{equation*}
\end{lemma}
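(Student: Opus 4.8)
The plan is to construct the enrichment operator $E_h$ explicitly by local averaging of degrees of freedom and then to control the discrepancy $v_h - E_h v_h$ elementwise by a scaling argument. Recall that a function in the $\mathcal{C}^1$ Hsieh--Clough--Tocher macro space $W_C$ is pinned down on each element $T$ by its value and gradient at each vertex of $T$ together with the normal derivative at each edge midpoint of $T$. Given $v_h \in W_{h,0}$, I would define $E_h v_h \in W_C$ by prescribing these degrees of freedom as follows: at an interior vertex $p$ set the value and gradient of $E_h v_h$ to the (unweighted) averages over the elements containing $p$ of $v_h|_T(p)$ and $\nabla v_h|_T(p)$; at the midpoint $m_e$ of an interior edge $e$ shared by $T_\pm$ set $\partial_\nu(E_h v_h)(m_e)$ to $\tfrac12\big(\partial_\nu v_h|_{T_+}(m_e) + \partial_\nu v_h|_{T_-}(m_e)\big)$; and at every degree of freedom attached to $\partial\Omega$ set the value to zero, which is consistent because $v_h$ vanishes there. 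The key structural observation, used throughout, is that $v_h \in H^1(\Omega)$ forces the \emph{value}-type degrees of freedom to agree across all elements sharing a vertex, so $E_h v_h$ can differ from $v_h|_T$ only at gradient degrees of freedom at vertices and normal-derivative degrees of freedom at edge midpoints.

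Next I would fix $T$, write $w := v_h|_T - E_h v_h|_T$ (a piecewise cubic on the HCT sub-triangulation of $T$), and invoke the norm equivalence on the finite-dimensional reference space between the $L^2$, $H^1$, $H^2$ norms and the nodal data, together with the usual affine scaling relations, to get
\begin{equation*}
h_T^{-4}\|w\|_{L^2(T)}^2 + h_T^{-2}|w|_{H^1(T)}^2 + |w|_{H^2(T)}^2
\;\lesssim\; \sum_{p\in\mathcal{V}(T)} h_T^{-2}\,\bigl|\nabla v_h|_T(p) - \overline{\nabla v_h}(p)\bigr|^2
+ \sum_{e\subset\partial T} h_T^{-2}\,\bigl|\partial_\nu v_h|_T(m_e) - \overline{\partial_\nu v_h}(m_e)\bigr|^2,
\end{equation*}
the $h_T^{-2}$ weight being the scaling factor for a pointwise first-derivative datum. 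Each difference between a value on $T$ and the corresponding average is in turn a sum of pairwise differences along a chain of elements meeting at $p$ (or the two elements meeting at $e$), each pairwise difference being a pointwise jump of $\nabla v_h$ across an edge; shape-regularity bounds the length of such chains uniformly, so the right-hand side is controlled by a bounded number of pointwise gradient jumps associated with $T$.

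It then remains to pass from pointwise jumps to edge $L^2$-norms and to sum. On an edge $e$ the quantity $\llbracket \nabla v_h\rrbracket|_e$ is a polynomial of fixed degree, so a one-dimensional inverse estimate gives $|\llbracket \nabla v_h\rrbracket(x)|^2 \lesssim h_e^{-1}\|\llbracket \nabla v_h\rrbracket\|_{L^2(e)}^2$ for every node $x$ on $\bar e$ (vertex nodes and the midpoint alike). Inserting this into the elementwise bound, summing over $T \in \mathcal{T}_h$, and noting that each interior edge is counted only a bounded number of times while boundary edges contribute nothing (since $v_h$ and, by construction, $E_h v_h$ vanish there), one obtains terms of order $h_T^{-2}h_e^{-1}\|\llbracket \nabla v_h\rrbracket\|_{L^2(e)}^2 \lesssim h_e^{-3}\|\llbracket \nabla v_h\rrbracket\|_{L^2(e)}^2$ by mesh regularity, which is the first asserted inequality. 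The second inequality, $\sum_{e\in\mathcal{E}_I} h_e^{-3}\|\llbracket \nabla v_h\rrbracket\|_{L^2(e)}^2 \le \vertiii{v_h}_h^2$, is immediate from the definition \cref{eq:mesh-norm} of $\vertiii{\cdot}_h$.

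The main obstacle is the combinatorial bookkeeping in the averaging step: one must verify that the difference between a degree of freedom of $v_h|_T$ and that of $E_h v_h$ is genuinely controlled only by gradient jumps of $v_h$ (so that the continuity $v_h\in H^1(\Omega)$ removes all value-type contributions) and that the constants stay independent of $h$, which requires shape-regularity to cap the valence of each vertex. Everything else -- the reference-element norm equivalence, the affine scalings, and the trace/inverse estimate on edges -- is routine. Since this is exactly the content of \cite[Lemma 1]{brenner-2011-book} transcribed into the present notation and norm $\vertiii{\cdot}_h$, I would carry out the construction as above and cite that reference for the remaining technical details.
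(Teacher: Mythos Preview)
The paper does not supply its own proof of this lemma; it simply restates \cite[Lemma~1]{brenner-2011-book} in the present notation and norm $\vertiii{\cdot}_h$. Your sketch is precisely the standard averaging-of-degrees-of-freedom construction from that reference, and the scaling plus inverse-estimate argument you outline is the right one, so in substance your proposal matches what the cited source does.

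One place to be more careful is the treatment of boundary degrees of freedom. You write that ``at every degree of freedom attached to $\partial\Omega$ set the value to zero, which is consistent because $v_h$ vanishes there.'' This is only consistent for the \emph{value} data and the \emph{tangential} component of the gradient at boundary vertices (both vanish because $v_h|_{\partial\Omega}=0$). If you also zero the \emph{normal} component of the gradient at a boundary vertex $p$, or the normal derivative at a boundary edge midpoint, then the discrepancy $\partial_\nu v_h|_T(p)$ is not a jump across an interior edge and cannot be absorbed into $\sum_{e\in\mathcal{E}_I} h_e^{-3}\|\llbracket\nabla v_h\rrbracket\|_{L^2(e)}^2$; the estimate would fail near $\partial\Omega$. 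The correct prescription is to zero only the value and tangential-derivative data at boundary nodes and to keep the averaging (over the adjacent elements, or the single adjacent element for a boundary edge midpoint) for the normal-derivative data. With that adjustment your argument goes through unchanged.
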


With this, we obtain the following discrete inf-sup condition for the discrete bilinear operator $\langle\mathcal{D}\mathcal{N}^s_h(u)\cdot,\cdot\rangle$.

\begin{theorem}
    \label{thm:inf-sup-u}
    (Weak coercivity of $\langle\mathcal{D}\mathcal{N}^s_h(u)\cdot,\cdot\rangle$)
    Let $u$ be a regular isolated solution of the nonlinear continuous weak form \cref{eq:weak-u}.
    For a sufficiently large $\epsilon$ and a sufficiently small mesh size $h$, the following discrete inf-sup condition holds with a positive constant $\beta_c>0$:
    \begin{equation}
        \label{eq:discrete-dn}
        0<\beta_c \leq \adjustlimits\inf_{\underset{\vertiii{v_h}_h=1}{v\in W_{h,0}}}\sup_{\underset{\vertiii{w_h}_h=1}{w\in W_{h,0}}} \langle \mathcal{D}\mathcal{N}^s_h(u)v_h,w_h\rangle.
    \end{equation}
\end{theorem}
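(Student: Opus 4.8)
The plan is to prove \cref{eq:discrete-dn} by contradiction, via a Schatz-type compactness argument in the spirit of the analysis in \cite{maity-2020a-article}. The point is that the discrete bilinear form of \cref{eq:linear-uh} is the coercive principal part $A^s_h+P^s_h$ (see \cref{eq:coer-As-Ps}) perturbed by the lower-order terms $3B^s(u,u,\cdot,\cdot)+C^s(\cdot,\cdot)$, which factor through $L^2(\Omega)$ and are in this sense ``compact'', while the continuous inf-sup \cref{eq:inf-sup-DN-smec} says exactly that the limiting operator $\mathcal{D}\mathcal{N}^s(u)$ is an isomorphism, in particular injective; the enrichment operator $E_h\colon W_{h,0}\to W_C\subset H^2(\Omega)\cap H^1_0(\Omega)$ of \cref{lem:enrichment} (assumed to preserve the homogeneous Dirichlet condition) is the device that transfers discrete functions into the fixed space $H^2(\Omega)\cap H^1_0(\Omega)$. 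So suppose \cref{eq:discrete-dn} fails: there are $h_k\to 0$ and $v_k\in W_{h_k,0}$ with $\vertiii{v_k}_{h_k}=1$ and $\epsilon_k\coloneqq\sup_{0\neq w_h\in W_{h_k,0}}\langle\mathcal{D}\mathcal{N}^s_{h_k}(u)v_k,w_h\rangle/\vertiii{w_h}_{h_k}\to 0$; the aim is to derive a contradiction.

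First I would extract a limit. Set $\hat v_k\coloneqq E_{h_k}v_k$; by \cref{lem:enrichment} the sequence $\{\hat v_k\}$ is bounded in $H^2(\Omega)\cap H^1_0(\Omega)$, so along a subsequence $\hat v_k\rightharpoonup\phi$ in $H^2(\Omega)$ and, by Rellich--Kondrachov, $\hat v_k\to\phi$ in $L^2(\Omega)$. Since $\|v_k-\hat v_k\|_{L^2(\Omega)}\lesssim h_k^2\vertiii{v_k}_{h_k}\to 0$ (again \cref{lem:enrichment}), also $v_k\to\phi$ in $L^2(\Omega)$. Writing $r_k\coloneqq v_k-\hat v_k$, from $\|\nabla r_k\|_{L^2(\Omega)}\lesssim h_k$ and from the fact that the jump-seminorm of $r_k$ coincides with that of $v_k$ (since $\nabla\hat v_k$ is single-valued) and is $\le\vertiii{v_k}_{h_k}=1$, a piecewise integration by parts against test matrix fields in $C^\infty_c(\Omega)$ shows that the piecewise Hessians satisfy $\mathcal{D}^2 v_k\rightharpoonup\mathcal{D}^2\phi$ weakly in $L^2(\Omega)$.

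Next I would identify $\phi$ and close the argument. Fix $w$ in the dense subset of $H^2(\Omega)\cap H^1_0(\Omega)$ of functions smooth up to $\partial\Omega$ with zero trace, and let $w_k\coloneqq I_{h_k}w\in W_{h_k,0}$ be its Lagrange interpolant; standard interpolation estimates give $\vertiii{w_k-w}_{h_k}\to 0$, so $\vertiii{w_k}_{h_k}$ is bounded. Passing to the limit term by term in $\langle\mathcal{D}\mathcal{N}^s_{h_k}(u)v_k,w_k\rangle$: the volume part of $A^s_h$ tends to $2B\int_\Omega\mathcal{D}^2\phi\colon\mathcal{D}^2 w$ by weak--strong convergence of Hessians; the interior-edge terms of $A^s_h$ and the penalty $P^s_h$ tend to $0$, using \cref{eq:est-average} with the mesh weights balanced so that the bounded factor carries the jump-seminorm of $v_k$ and the vanishing factor carries $\llbracket\nabla w_k\rrbracket=\llbracket\nabla(w_k-w)\rrbracket$; and $3B^s(u,u,v_k,w_k)+C^s(v_k,w_k)\to 3B^s(u,u,\phi,w)+C^s(\phi,w)$ because $u\in L^\infty(\Omega)$ and $v_k,w_k$ converge in $L^2(\Omega)$. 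Hence $\langle\mathcal{D}\mathcal{N}^s_{h_k}(u)v_k,w_k\rangle\to\langle\mathcal{D}\mathcal{N}^s(u)\phi,w\rangle_{H^2}$, while the left-hand side is $\le\epsilon_k\vertiii{w_k}_{h_k}\to 0$; so $\langle\mathcal{D}\mathcal{N}^s(u)\phi,w\rangle_{H^2}=0$ for all such $w$, hence for all $w\in H^2(\Omega)\cap H^1_0(\Omega)$ by density, and \cref{eq:inf-sup-DN-smec} forces $\phi=0$. Finally, testing $v_k$ against itself, \cref{eq:coer-As-Ps} gives $(A^s_h+P^s_h)(v_k,v_k)\gtrsim\vertiii{v_k}_{h_k}^2=1$, while $|3B^s(u,u,v_k,v_k)|+|C^s(v_k,v_k)|\lesssim\|v_k\|_{L^2(\Omega)}^2\to\|\phi\|_{L^2(\Omega)}^2=0$ by \cref{lem:bound-BandC}; thus $\langle\mathcal{D}\mathcal{N}^s_{h_k}(u)v_k,v_k\rangle$ is bounded below by a positive constant for $k$ large, contradicting $\langle\mathcal{D}\mathcal{N}^s_{h_k}(u)v_k,v_k\rangle\le\epsilon_k\vertiii{v_k}_{h_k}\to 0$.

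The main obstacle is the treatment of the nonconforming principal part in the limit passage. One must (i) prove that the piecewise Hessians converge weakly in $L^2(\Omega)$ to $\mathcal{D}^2\phi$ — equivalently, that the piecewise Hessian of the ``nonconforming remainder'' $r_k$ converges weakly to zero, which rests on the enrichment estimates of \cref{lem:enrichment} together with the boundedness of the jump-seminorm; and (ii) show that the interior-edge consistency terms of $A^s_h$ and $P^s_h$ disappear in the limit, which hinges on $\vertiii{I_{h_k}w-w}_{h_k}\to 0$, i.e. on the jump-seminorm of the Lagrange interpolant of a smooth function being of higher order than the naive interpolation bound (a cancellation between the one-sided reconstructions of $\nabla w$ from the two adjacent cells). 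Everything else — boundedness of $B^s$ and $C^s$, coercivity of $A^s_h+P^s_h$, and the continuous inf-sup — enters only through the routine estimates indicated above.
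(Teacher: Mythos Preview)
Your argument is correct and takes a genuinely different route from the paper. The paper proceeds constructively: for a fixed $v_h$ with $\vertiii{v_h}_h=1$ it introduces auxiliary solutions $\xi,\eta\in H^4(\Omega)\cap H^1_0(\Omega)$ of the linear problems $A^s(\xi,\cdot)=3B^s(u,u,v_h,\cdot)+C^s(v_h,\cdot)$ and $A^s(\eta,\cdot)=3B^s(u,u,E_hv_h,\cdot)+C^s(E_hv_h,\cdot)$, uses the continuous inf-sup \cref{eq:inf-sup-DN-smec} to control $|E_hv_h|_2$ by $\vertiii{E_hv_h+\eta}_h$, and then estimates each piece of a triangle-inequality decomposition by explicit powers of $h$, arriving at $1=\vertiii{v_h}_h\le C_t\big(\langle\mathcal{D}\mathcal{N}^s_h(u)v_h,w_h\rangle+h^2+h^{\min\{\deg-1,2\}}\big)$. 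Your Schatz-type contradiction argument instead extracts a weak $H^2$-limit of $E_{h_k}v_k$, identifies it as zero via \cref{eq:inf-sup-DN-smec}, and closes with the coercivity \cref{eq:coer-As-Ps} plus the $L^2$-smallness of the lower-order terms. The paper's route delivers an explicit $h$-threshold and inf-sup constant; your route is shorter, avoids the $H^4$-regularity of the auxiliary problems, and makes transparent that only compactness of the lower-order part and injectivity of the limiting operator are needed.

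One point deserves more care than you indicate. Of the two interior-edge consistency terms in $A^s_h$, the one carrying $\llbracket\nabla w_k\rrbracket$ indeed vanishes exactly as you say. But the symmetric term $\sum_{e}\int_e\llbrace\partial^2_\nu w_k\rrbrace\llbracket\nabla v_k\rrbracket$ has the jump on $v_k$, which is only bounded, and \cref{eq:est-average} applied directly gives a bounded, not vanishing, product. The fix is to split $w_k=w+(w_k-w)$: the piece with $w_k-w$ vanishes by \cref{eq:est-average} and $\vertiii{w_k-w}_{h_k}\to 0$, while for the smooth piece one rebalances the weights as $\big(\sum_e h_e^{3}\|\partial^2_\nu w\|_{L^2(e)}^2\big)^{1/2}\big(\sum_e h_e^{-3}\|\llbracket\nabla v_k\rrbracket\|_{L^2(e)}^2\big)^{1/2}\lesssim h\|w\|_3\cdot 1\to 0$. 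With this adjustment your limit passage goes through and the rest of the argument is sound.
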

\begin{proof}
    For $v\in H^2(\Omega)\cap H^1_0(\Omega)$, it follows from the boundedness result of $B^s,C^s$ that $B^s(u,u,v,\cdot)$ and $C^s(v,\cdot)\in (L^2(\Omega))^\star$.
    Furthermore, since $A^s(\cdot,\cdot)$ is bounded and coercive as given by \cref{lem:As-coer-bound}, for a given $v_h\in W_h$ with $\vertiii{v_h}_h=1$, there exists $\xi$ and $\eta\in H^4(\Omega)\cap H^1_0(\Omega)$ that solve the linear systems:
    \begin{subequations}
        \begin{equation}
            \label{eq:As-sub1}
            A^s(\xi,w) = 3B^s(u,u,v_h,w) + C^s(v_h,w) \quad \forall w\in H^2(\Omega)\cap H^1_0(\Omega),
        \end{equation}
        \begin{equation}
            \label{eq:As-sub2}
            A^s(\eta,w) = 3B^s(u,u,E_h v_h,w) + C^s(E_h v_h,w) \quad \forall w\in H^2(\Omega)\cap H^1_0(\Omega).
        \end{equation}
    \end{subequations}
    We require the $H^4$-regularity for the derivation of \cref{eq:asps-sub2} below.
    It then follows from the standard elliptic regularity result that $\|\eta\|_4 \lesssim C_{BC}$ with a constant $C_{BC}$ depending on $\|u\|_2$.

    Subtracting \cref{eq:As-sub1} from \cref{eq:As-sub2}, then taking $w=\eta-\xi$ and using the coercivity of $A^s(\cdot,\cdot)$, boundedness of $B^s(\cdot,\cdot,\cdot,\cdot)$, $C^s(\cdot,\cdot)$ and enrichment estimate \cref{lem:enrichment}, we get
        \begin{equation}
            |\eta-\xi|_2 \lesssim \left(3 \|u\|_2^2 +1\right) \|E_hv_h-v_h\|_0
            \lesssim h^2 \vertiii{v_h}_h = h^2.
                         \label{eq:eta-xi}
        \end{equation}
    Since $u$ is a regular isolated solution of \cref{eq:weak-uorig}, it yields by \cref{eq:inf-sup-DN-smec}, \cref{eq:As-sub2}, \cref{lem:As-coer-bound}, the fact that $\llbracket \nabla(E_hv_h+\eta)\rrbracket=0$ and the triangle inequality, that there exists $w\in H^2(\Omega)\cap H^1_0(\Omega)$ with $|w|_2=1$ such that
        \begin{align}
            |E_h v_h|_2 &\lesssim \langle \mathcal{D}\mathcal{N}^s(u) E_hv_h, w\rangle_{H^2}
                        = A^s(E_h v_h, w)+ 3B^s(u,u,E_hv_h,w)+C^s(E_hv_h,w) \nonumber \\
                        &= A^s(E_hv_h+\eta,w)
                        \lesssim |E_hv_h +\eta|_2 {|w|_2} 
                        = \vertiii{E_hv_h+\eta}_h \nonumber \\
                        &\le \vertiii{E_hv_h-v_h}_h +\vertiii{v_h+I_h\xi}_h+\vertiii{I_h\xi-\xi}_h+ \underbrace{\vertiii{\xi-\eta}_h}_{=|\xi-\eta|_2}.\label{eq:ehvs-2}
        \end{align}
        Note that $\llbracket \nabla\xi\rrbracket=0$ on $\mathcal{E}_I$ since $\xi\in H^4(\Omega)$.
    We can thus calculate using \cref{lem:enrichment} that
    \begin{equation*}
            \vertiii{E_hv_h-v_h}_h^2 \lesssim \sum_{e\in\mathcal{E}_I}\int_e \frac{1}{h_e^3} |\llbracket \nabla v_h\rrbracket |^2  
                                     \lesssim \sum_{e\in\mathcal{E}_I}\int_e \frac{1}{h_e^3} |\llbracket \nabla (v_h +\xi)\rrbracket |^2 
                                     \le \vertiii{v_h+\xi}_h^2.
    \end{equation*}
    Further, by the triangle inequality, we get
    \begin{equation}
        \label{eq:ehvh-sub}
        \vertiii{E_hv_h-v_h}_h \lesssim \vertiii{v_h+\xi}_h \le \vertiii{v_h+I_h\xi}_h + \vertiii{\xi-I_h\xi}_h.
    \end{equation}

    Since $v_h+I_h\xi\in W_h$, it follows from the coercivity result \cref{eq:coer-As-Ps} that there exists $w_h\in W_h$ with $\vertiii{w}_h=1$ such that
        \begin{align}
            \vertiii{v_h+I_h\xi}_h &\lesssim A^s_h(v_h+I_h\xi,w_h) + P^s_h(v_h+I_h\xi,w_h)\nonumber \\
                                   &= \langle \mathcal{D}\mathcal{N}^s_h(u)v_h,w_h\rangle - 3B^s(u,u,v_h,w_h) - C^s(v_h,w_h) \nonumber \\
                                   &\quad + A^s_h(I_h\xi-\xi,w_h) + P^s_h(I_h\xi-\xi,w_h)+ A^s_h(\xi,w_h) + P^s_h(\xi,w_h) \nonumber \\
                                   &=\langle \mathcal{D}\mathcal{N}^s_h(u)v_h,w_h\rangle + 3B^s(u,u,v_h,E_hw_h-w_h) + C^s(v_h,E_hw_h-w_h) \nonumber \\
                                   &\quad + A^s_h(I_h\xi-\xi,w_h) + P^s_h(I_h\xi-\xi,w_h)
                                   + A^s_h(\xi,w_h-E_hw_h) + P^s_h(\xi,w_h-E_hw_h),\label{eq:vhplus-sub}
        \end{align}
    where in the last equality we have used the fact that
    \begin{equation*}
        3B^s(u,u,v_h,E_hw_h) + C^s(v_h,E_hw_h) = A^s(\xi,E_hw_h) = A^s_h(\xi,E_hw_h)+P^s(\xi,E_hw_h)
    \end{equation*}
    because of \cref{eq:As-sub1} and $\llbracket\nabla\xi\rrbracket=\llbracket\nabla E_hw_h\rrbracket=0$.

    Using the boundedness result \cref{lem:bound-BandC} and the enrichment estimate \cref{lem:enrichment}, we obtain
    \begin{equation}
        \label{eq:bscs-sub}
        3B^s(u,u,v_h,E_hw_h-w_h) + C^s(v_h,E_hw_h-w_h) \lesssim \underbrace{\|v_h\|_0}_{\lesssim |v_h|_1\lesssim \vertiii{v_h}_h=1} \underbrace{\|E_hw_h-w_h\|_0}_{\lesssim h^2\vertiii{w_h}_h=h^2}.
    \end{equation}
    By the boundedness of the bilinear form $A^s_h+P^s_h$ and standard interpolation estimate, we have
        \begin{equation}
            A^s_h(I_h\xi-\xi,w_h) +P^s_h(I_h\xi-\xi,w_h) \lesssim \vertiii{I_h\xi-\xi}_h \underbrace{\vertiii{w_h}_h}_{=1}
        \lesssim h^{\min\{\deg-1,2\}}\|\xi\|_4.
        \label{eq:asps-sub}
    \end{equation}
    Moreover, by the fact that $\llbracket \nabla\xi\rrbracket=\llbracket \nabla(E_hw_h)\rrbracket=0$, the enrichment estimate \cref{lem:enrichment} and \cref{eq:ibp}, there holds
        \begin{align}
            A^s_h &(\xi,w_h-E_hw_h) +P^s_h(\xi,w_h-E_hw_h)\nonumber \\
                  &= 2B \sum_{T\in\mathcal{T}_h} \int_T \mathcal{D}^2\xi\colon \mathcal{D}^2(w_h-E_hw_h) - 2B \sum_{e\in\mathcal{E}_I}\int_e\llbrace \frac{\partial^2 \xi}{\partial\nu^2}\rrbrace \llbracket \nabla(w_h-E_hw_h)\rrbracket \nonumber \\
                                                         &= 2B \sum_{T\in\mathcal{T}_h} \nabla\cdot \left(\nabla\cdot(\mathcal{D}^2 \xi)\right) (w_h-E_hw_h)
                                                         \lesssim \|\xi\|_4 \|w_h-E_hw_h\|_0
                                                         \lesssim h^2\|\xi\|_4. \label{eq:asps-sub2}
        \end{align}
    Combine \cref{eq:bscs-sub,eq:asps-sub,eq:asps-sub2} in \cref{eq:vhplus-sub} to obtain
    \begin{equation}
        \label{eq:combine-sub1}
        \vertiii{v_h+I_h\xi}_h \lesssim \langle \mathcal{D}\mathcal{N}^s_h(u)v_h,w_h\rangle + h^2 + h^{\min\{\deg-1,2\}}.
    \end{equation}
    Substituting \cref{eq:combine-sub1} into \cref{eq:ehvh-sub} and using standard interpolation estimates yield that
    \begin{equation}
        \label{eq:ehvh-sub2}
        \vertiii{E_hv_h-v_h}_h \lesssim \langle \mathcal{D}\mathcal{N}^s_h(u)v_h,w_h\rangle + h^2 + h^{\min\{\deg-1,2\}}.
    \end{equation}
    A use of \cref{eq:ehvh-sub2,eq:combine-sub1}, standard interpolation estimates and \cref{eq:eta-xi} in \cref{eq:ehvs-2} leads to
    \begin{equation*}
        |E_hv_h|_2 \lesssim \langle \mathcal{D}\mathcal{N}^s_h(u)v_h,w_h\rangle + h^2 + h^{\min\{\deg-1,2\}}.
    \end{equation*}
    Then, by the triangle inequality, we have
    \begin{equation*}
            1=\vertiii{v_h}_h \le \vertiii{v_h-E_hv_h}_h + \underbrace{\vertiii{E_hv_h}_h}_{=|E_hv_h|_2}
                              \le C_{t} \left( \langle \mathcal{D}\mathcal{N}^s_h(u)v_h,w_h\rangle + h^2 + h^{\min\{\deg-1,2\}} \right).
    \end{equation*}
    Therefore, for the mesh size $h$ satisfying
    $
        h^2 + h^{\min\{\deg-1,2\}} < \frac{1}{2C_t},
    $
    the discrete inf-sup condition \cref{eq:discrete-dn} holds for $\beta_c=\frac{1}{2C_t}$.
\end{proof}

We can now obtain the discrete inf-sup condition for the perturbed bilinear form $\langle\mathcal{D}\mathcal{N}^s_h(I_hu)\cdot,\cdot\rangle$ given by
\begin{equation}
    \label{eq:def-per-bilinear}
    \langle\mathcal{D}\mathcal{N}^s_h(I_hu)v_h,w_h\rangle = A^s_h(v_h,w_h) + P^s_h(v_h,w_h) + 3B^s(I_hu,I_h u, v_h,w_h) + C^s(v_h,w_h) \quad \forall v_h,w_h\in W_{h,0}.
\end{equation}
Here, we employ the interpolation operator $I_h:H^2(\Omega)\cap H^1_b(\Omega;\mathbb{R}) \to W_{h,b}$.

\begin{theorem}
    \label{thm:weak-coer-per}
    (Weak coercivity of $\langle\mathcal{D}\mathcal{N}^s_h(I_h u)\cdot,\cdot\rangle$)
    Let $u$ be a regular isolated solution of the nonlinear continuous weak form \cref{eq:weak-u} and $I_h u$ the interpolation of $u$.
    For a sufficiently large $\epsilon$ and a sufficiently small mesh size $h$, the following discrete inf-sup condition holds:
    \begin{equation}
        \label{eq:weak-coer-per}
        0<\frac{\beta_c}{2} \leq \adjustlimits\inf_{\underset{\vertiii{v_h}_h=1}{v_h\in W_{h,0}}}\sup_{\underset{\vertiii{w_h}_h=1}{w_h\in W_{h,0}}} \langle \mathcal{D}\mathcal{N}^s_h(I_h u)v_h,w_h\rangle.
    \end{equation}
\end{theorem}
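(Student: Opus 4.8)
The plan is to regard $\langle\mathcal{D}\mathcal{N}^s_h(I_h u)\cdot,\cdot\rangle$ as a small perturbation of $\langle\mathcal{D}\mathcal{N}^s_h(u)\cdot,\cdot\rangle$, for which the discrete inf-sup condition \cref{eq:discrete-dn} with constant $\beta_c$ is already available from \cref{thm:inf-sup-u}. Comparing \cref{eq:def-per-bilinear} with the definition of $\langle\mathcal{D}\mathcal{N}^s_h(u)\cdot,\cdot\rangle$, the two forms differ only in the cubic term:
\begin{equation*}
\langle\mathcal{D}\mathcal{N}^s_h(u)v_h,w_h\rangle - \langle\mathcal{D}\mathcal{N}^s_h(I_h u)v_h,w_h\rangle = 3\Big(B^s(u,u,v_h,w_h) - B^s(I_h u, I_h u, v_h, w_h)\Big),
\end{equation*}
so it suffices to show that this difference is uniformly of order $h$ over all $v_h,w_h\in W_{h,0}$ with $\vertiii{v_h}_h=\vertiii{w_h}_h=1$.

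To estimate the difference I would split it using the multilinearity of $B^s$ in its first two arguments,
\begin{equation*}
B^s(u,u,v_h,w_h) - B^s(I_h u, I_h u, v_h, w_h) = B^s(u-I_h u,\, u,\, v_h, w_h) + B^s(I_h u,\, u-I_h u,\, v_h, w_h),
\end{equation*}
and then bound each term by rerunning the Hölder/Sobolev argument from the proof of \cref{lem:bound-BandC}: Hölder's inequality, the embedding $H^1(\Omega)\hookrightarrow L^4(\Omega)$, the Poincaré inequality \cref{eq:boundH1byH2} applied to the arguments $v_h,w_h\in W_{h,0}$, and the standard interpolation bounds $\|u-I_h u\|_{1}\lesssim h\|u\|_2$ and $\|I_h u\|_1\lesssim\|u\|_2$, together give
\begin{equation*}
\big|B^s(u,u,v_h,w_h) - B^s(I_h u, I_h u, v_h, w_h)\big| \lesssim \|u-I_h u\|_{L^4(\Omega)}\big(\|u\|_1+\|I_h u\|_1\big)\vertiii{v_h}_h\vertiii{w_h}_h \le C_\star\, h,
\end{equation*}
with $C_\star$ depending only on $\|u\|_2$.

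With this in hand I would conclude by a standard perturbation argument. Fix $v_h\in W_{h,0}$ with $\vertiii{v_h}_h=1$; by \cref{thm:inf-sup-u} there exists $w_h\in W_{h,0}$, $\vertiii{w_h}_h=1$, with $\langle\mathcal{D}\mathcal{N}^s_h(u)v_h,w_h\rangle\ge \tfrac34\beta_c$ (the supremum being approached to within $\tfrac14\beta_c$). Then
\begin{equation*}
\langle\mathcal{D}\mathcal{N}^s_h(I_h u)v_h,w_h\rangle \ge \langle\mathcal{D}\mathcal{N}^s_h(u)v_h,w_h\rangle - 3C_\star h \ge \tfrac34\beta_c - 3C_\star h \ge \tfrac12\beta_c
\end{equation*}
provided $h\le \beta_c/(12 C_\star)$; taking the supremum over $w_h$ and then the infimum over $v_h$ yields \cref{eq:weak-coer-per}. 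The admissible mesh size is the smaller of this threshold and the one already required by \cref{thm:inf-sup-u}.

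Nothing here is deep. The one point requiring care is that $u-I_h u$ is not piecewise polynomial, so the boundedness estimate \cref{eq:bdd-Bs-h} (stated for $W_{h,0}$) cannot be applied to it verbatim; instead one repeats its proof, which only uses $u-I_h u\in H^1_0(\Omega)$, piecewise $H^2$-regularity, and the fact that the interpolation error vanishes as $h\to0$. A minor additional subtlety is that \cref{eq:discrete-dn} is a supremum rather than a maximum, which is why one picks a near-optimal test function and the constant degrades from $\beta_c$ to $\beta_c/2$.
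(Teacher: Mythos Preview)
Your proposal is correct and follows essentially the same perturbation argument as the paper: both compute the difference $\langle\mathcal{D}\mathcal{N}^s_h(u)v_h,w_h\rangle - \langle\mathcal{D}\mathcal{N}^s_h(I_h u)v_h,w_h\rangle$, bound it via the boundedness of $B^s$ together with an interpolation estimate on $u-I_hu$, and then invoke the discrete inf-sup condition \cref{eq:discrete-dn}. The only cosmetic differences are that the paper uses the symmetric splitting $B^s(I_hu,I_hu,\cdot,\cdot)=B^s(u,u,\cdot,\cdot)+B^s(\tilde u,\tilde u,\cdot,\cdot)-2B^s(u,\tilde u,\cdot,\cdot)$ (with $\tilde u=u-I_hu$) and records the sharper rate $h^{\min\{\deg-1,\Bbbk_u-2\}}$ rather than your $O(h)$, but neither distinction affects the argument.
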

\begin{proof}
    Denote $\tilde{u}= u-I_h u$. By the definition \cref{eq:def-per-bilinear} of the bilinear form $\langle\mathcal{D}\mathcal{N}^s_h(I_h u)\cdot,\cdot\rangle$, we have
    $
    \langle\mathcal{D}\mathcal{N}^s_h(I_h u)v_h,w_h\rangle = A^s_h(v_h,w_h) +P^s_h(v_h,w_h) + 3B^s(u-\tilde{u}, u-\tilde{u}, v_h,w_h) + C^s(v_h,w_h).
    $
    It follows from the definition of $B^s$ and its boundedness result \cref{lem:bound-BandC} that
    \begin{equation*}
        \begin{aligned}
            B^s(u-\tilde{u}, u-\tilde{u}, v_h,w_h) &= B^s(u,u,v_h,w_h) + B^s(\tilde{u},\tilde{u},v_h,w_h)-2B^s(u,\tilde{u}, v_h,w_h) \\
                                                     &\ge B^s(u,u,v_h,w_h) + B^s(\tilde{u},\tilde{u},v_h,w_h) - 2C_1 \vertiii{u}_h \vertiii{\tilde{u}}_h \vertiii{v_h}_h \vertiii{w_h}_h,
        \end{aligned}
    \end{equation*}
    where $C_1$ is the generic constant arising in the boundedness result \cref{lem:bound-BandC} for $B^s(\cdot,\cdot,\cdot,\cdot)$.
    Therefore, we obtain that
    \begin{equation*}
         \langle\mathcal{D}\mathcal{N}^s_h(I_h u)v_h,w_h\rangle \ge  \langle\mathcal{D}\mathcal{N}^s_h(u)v_h,w_h\rangle 
         + 3 B^s(\tilde{u},\tilde{u},v_h,w_h) - 6C_1 \vertiii{u}_h \vertiii{\tilde{u}}_h \vertiii{v_h}_h \vertiii{w_h}_h.
    \end{equation*}
    Now using the inf-sup condition \cref{thm:inf-sup-u} for the bilinear form $\langle\mathcal{D}\mathcal{N}^s_h(u)\cdot,\cdot\rangle$, boundedness result \cref{lem:bound-BandC} and interpolation estimates, we get
        \begin{align*}
            \sup_{\underset{w_h\in W_{h,0}}{\vertiii{w_h}_h=1}} \langle\mathcal{D}\mathcal{N}^s_h(I_h u)v_h,w_h\rangle &\ge \sup_{\underset{w_h\in W_{h,0}}{\vertiii{w_h}_h=1}} \langle\mathcal{D}\mathcal{N}^s_h(u)v_h,w_h\rangle \\
                                                                                                                       & \quad - 3|B^s(\tilde{u},\tilde{u},v_h,w_h)|
                                                                                            - 6C_1 h^{\min\{\deg-1, \Bbbk_u-2\}} \vertiii{u}_h \vertiii{v_h}_h \\
                                                                                            & \ge \left(\beta_c - C_2 h^{\min\{\deg-1, \Bbbk_u-2\}} \right) \vertiii{v_h}_h 
                                                                                            \ge \frac{\beta_c}{2} \vertiii{v_h}_h,
    \end{align*}
    for a sufficiently small mesh size $h$ such that $h^{\min\{\deg-1, \Bbbk_u-2\}} < \frac{\beta_c}{2C_2}$.
    Here, $C_2$ depends on $C_1$ and $\|u\|_{\Bbbk_u}$ and $\Bbbk_u\geq 4$ gives the regularity of $u$, i.e., $u\in H^{\Bbbk_u}(\Omega)$.
    Therefore, the inf-sup condition \cref{eq:weak-coer-per} holds.
\end{proof}

\subsubsection{Convergence analysis}
We proceed to the error analysis for the discrete nonlinear problem \cref{eq:weak-u}.
Let
$
    \mathcal{B}_{\rho}(I_h u) \coloneqq \{v_h\in W_{h}: \vertiii{I_h u - v_h}_{h} \le \rho \}.
$
We define the nonlinear map $\mu_h: W_{h}\to W_{h}$ by
\begin{equation}
    \label{eq:nonlinearmap}
    \langle \mathcal{D}\mathcal{N}^s_h(I_h u) \mu_h(v_h), w_h\rangle = 3B^s(I_h u, I_h u, v_h, w_h) + L^s(w_h) - B^s(v_h,v_h,v_h,w_h)
\end{equation}
for $v_h,w_h\in W_{h,0}$.
Due to the weak coercivity property in \cref{thm:weak-coer-per}, the nonlinear map $\mu_h$ is well-defined.

The existence and local uniqueness of the solution $u_h$ to the discrete nonlinear problem \cref{eq:weak-u} will be proven via an application of Brouwer's fixed point theorem, which necessitates the use of two auxiliary lemmas illustrating that (i) $\mu_h$ maps from a ball to itself; and (ii) the map $\mu_h$ is contracting.

%

\begin{lemma}
    \label{lem:mappingball}
    (Mapping from a ball to itself)
    Let $u$ be a regular isolated solution of the continuous nonlinear weak problem \cref{eq:weak-uorig}.
    For a sufficiently large $\epsilon$ and a sufficiently small mesh size $h$, there exists a positive constant $R(h)>0$ such that:
    \begin{equation*}
        \vertiii{v_h - I_h u}_{h} \le R(h) \Rightarrow \vertiii{\mu_h(v_h)- I_h u}_{h} \le R(h)\quad \forall v_h\in W_{h,0}.
    \end{equation*}
\end{lemma}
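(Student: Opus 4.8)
The plan is to run the usual Banach fixed-point argument for $\mathcal{C}^0$-IP discretisations: bound $\vertiii{\mu_h(v_h)-I_h u}_h$ by the discrete inf--sup estimate \cref{eq:weak-coer-per} of \cref{thm:weak-coer-per}, and then show that the resulting duality pairing splits into an interpolation-error part of size $h^{\min\{\deg-1,\Bbbk_u-2\}}$ and a genuinely nonlinear part that is quadratic and cubic in $\vertiii{v_h-I_h u}_h$. Throughout I write $e_h\coloneqq I_h u-v_h\in W_{h,0}$ and $\eta_u\coloneqq u-I_h u$. Applying \cref{thm:weak-coer-per} to $\mu_h(v_h)-I_h u\in W_{h,0}$ gives
\[
\frac{\beta_c}{2}\,\vertiii{\mu_h(v_h)-I_h u}_h \le \sup_{\underset{\vertiii{w_h}_h=1}{w_h\in W_{h,0}}} \langle\mathcal{D}\mathcal{N}^s_h(I_h u)(\mu_h(v_h)-I_h u),w_h\rangle,
\]
so it remains to estimate this pairing for each unit test function $w_h$.

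To do so I would expand the pairing using, in order, the definition \cref{eq:nonlinearmap} of $\mu_h$ for the term involving $\mu_h(v_h)$, the definition \cref{eq:def-per-bilinear} of $\langle\mathcal{D}\mathcal{N}^s_h(I_h u)I_h u,w_h\rangle$, and the consistency identity from \cref{thm:consistency}, which gives $L^s(w_h)=A^s_h(u,w_h)+B^s(u,u,u,w_h)+C^s(u,w_h)$ (the penalty term $P^s_h(u,w_h)$ vanishes because $u\in H^4(\Omega)$ has continuous gradient). Substituting $v_h=I_h u-e_h$ and $u=I_h u+\eta_u$ and using the full symmetry of $B^s$, the cubic contributions telescope, the three copies of $B^s(I_h u,I_h u,I_h u,w_h)$ cancel, and one is left with
\begin{align*}
\langle\mathcal{D}\mathcal{N}^s_h(I_h u)(\mu_h(v_h)-I_h u),w_h\rangle
&= A^s_h(\eta_u,w_h)+P^s_h(\eta_u,w_h)+C^s(\eta_u,w_h) \\
&\quad + 3B^s(I_h u,I_h u,\eta_u,w_h)+3B^s(I_h u,\eta_u,\eta_u,w_h)+B^s(\eta_u,\eta_u,\eta_u,w_h) \\
&\quad - 3B^s(I_h u,e_h,e_h,w_h)+B^s(e_h,e_h,e_h,w_h),
\end{align*}
where the first line is linear interpolation error, the second higher-order interpolation error, and the third the nonlinear remainder in $e_h$.

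I would then bound the three lines separately. The first is $\lesssim\vertiii{\eta_u}_h\vertiii{w_h}_h$ by the $\vertiii{\cdot}_h$-boundedness of $A^s_h+P^s_h$ and of $C^s$. The second is controlled by \cref{lem:bound-BandC}, using $\|I_h u\|_{H^1(\Omega)}\lesssim\|u\|_{H^1(\Omega)}$ and $\|\eta_u\|_{L^4(\Omega)}\lesssim\vertiii{\eta_u}_h$ (legitimate since $\eta_u\in H^1_0(\Omega)$, via the Sobolev embedding and the piecewise Poincar\'e inequality \cref{eq:boundH1byH2}), yielding a bound $\lesssim(\|u\|_2^2+1)\vertiii{\eta_u}_h+\vertiii{\eta_u}_h^3$. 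The third line is $\lesssim\|u\|_2\vertiii{e_h}_h^2+\vertiii{e_h}_h^3$, again by \cref{lem:bound-BandC}. Inserting the standard interpolation estimate $\vertiii{\eta_u}_h\lesssim h^{\min\{\deg-1,\Bbbk_u-2\}}\|u\|_{\Bbbk_u}$ (as used in the proof of \cref{thm:weak-coer-per}) and taking the supremum over $w_h$, I arrive at
\[
\vertiii{\mu_h(v_h)-I_h u}_h \le C_3\,h^{\min\{\deg-1,\Bbbk_u-2\}} + C_4\big(\vertiii{e_h}_h^2+\vertiii{e_h}_h^3\big),
\]
with $C_3,C_4$ depending only on $\beta_c$, the model parameters, and $\|u\|_{\Bbbk_u}$.

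Finally I would close the self-mapping property by a quadratic-radius computation: if $\vertiii{e_h}_h\le R(h)$ with $R(h)\le1$, then $\vertiii{e_h}_h^3\le\vertiii{e_h}_h^2\le R(h)^2$, so the last display gives $\vertiii{\mu_h(v_h)-I_h u}_h\le C_3 h^{\min\{\deg-1,\Bbbk_u-2\}}+2C_4R(h)^2$; choosing $R(h)\coloneqq2C_3\,h^{\min\{\deg-1,\Bbbk_u-2\}}$ and restricting to mesh sizes small enough that $R(h)\le1$ and $8C_3C_4\,h^{\min\{\deg-1,\Bbbk_u-2\}}\le1$ (together with the smallness already demanded in \cref{thm:weak-coer-per}) forces $C_3 h^{\min\{\deg-1,\Bbbk_u-2\}}+2C_4R(h)^2\le R(h)$, which is exactly the claim. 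I expect the main obstacle to be the bookkeeping in the telescoping step: one must verify that, once the consistency identity is inserted, the three instances of $B^s(I_h u,I_h u,I_h u,w_h)$ cancel exactly and that \emph{only} interpolation errors and the $e_h$-remainder survive, since a miscount there would corrupt the convergence rate. A second, structural point is that the nonlinear contribution $C_4\vertiii{e_h}_h^2$ can be absorbed into $R(h)$ only because $R(h)\to0$ as $h\to0$; this explains both the shrinking radius and the extra mesh restriction, and foreshadows that, paired with a contraction estimate, $R(h)$ will be pinned at the optimal order $h^{\min\{\deg-1,\Bbbk_u-2\}}$ in the subsequent convergence theorem.
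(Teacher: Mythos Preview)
Your proposal is correct and follows essentially the same approach as the paper: the paper likewise uses the discrete inf--sup condition \cref{eq:weak-coer-per}, inserts the consistency identity \cref{eq:discrete-u-problem}, and splits the resulting pairing into interpolation-error pieces ($\mathfrak{N}_1,\mathfrak{N}_2,\mathfrak{N}_3$) and a nonlinear remainder in $e_I=v_h-I_hu$ ($\mathfrak{N}_4$), then closes with the same choice $R(h)\sim h^{\min\{\deg-1,\Bbbk_u-2\}}$. The only cosmetic differences are that the paper groups the $B^s$ terms as $\mathfrak{N}_3,\mathfrak{N}_4$ before expanding (rather than telescoping directly with $u=I_hu+\eta_u$, $v_h=I_hu-e_h$) and tracks a slightly sharper intermediate power of $h$ in $\mathfrak{N}_3$ via the $L^2$ interpolation estimate, which does not affect the final rate.
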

\begin{proof}
    Note that the solution $u\in H^2(\Omega)\cap H^1_0(\Omega)$ of \cref{eq:weak-uorig} satisfies the discrete weak formulation \cref{eq:weak-u} due to the consistency result \cref{thm:consistency}, that is to say, there holds that
    \begin{equation}
        \label{eq:discrete-u-problem}
        A_h^s(u, w_h) + P_h^s(u,w_h) + B^s(u,u,u,w_h) + C^s(u,w_h) = L^s(w_h)
        \quad\forall w_h\in W_{h,0}.
    \end{equation}
    By the linearity of $\langle \mathcal{D}\mathcal{N}^s_h(I_h u) \cdot,\cdot\rangle_{H^2}$, the definition \cref{eq:nonlinearmap} of the nonlinear map $\mu_h$ and \cref{eq:discrete-u-problem}, we calculate
        \begin{align*}
            \langle \mathcal{D} &\mathcal{N}^s_h(I_h u) (I_h u - \mu_h(v_h)), w_h\rangle
                            = \langle \mathcal{D}\mathcal{N}^s_h(I_h u) I_h u, w_h\rangle - \langle \mathcal{D}\mathcal{N}^s_h(I_h u) \mu_h(v_h), w_h\rangle \\
                            &= {A_h^s(I_h u, w_h) + P_h^s(I_h u,w_h)} + 3B^s(I_h u,I_h u,I_h u,w_h) + C^s(I_h u,w_h)\\
                  &\quad - 3B^s(I_h u, I_h u, v_h, w_h) + B^s(v_h,v_h,v_h,w_h) - L^s(w_h)\\
                  &= \underbrace{A^s_h(I_h u- u, w_h)+ P^s_h(I_h u - u, w_h)}_{\eqqcolon \mathfrak{N}_1} + \underbrace{C^s(I_h u -u, w_h)}_{\eqqcolon \mathfrak{N}_2}
                  + \underbrace{\left(B^s(I_h u, I_h u, I_h u, w_h) - B^s(u, u,u,w_h)\right)}_{\eqqcolon \mathfrak{N}_3} \\
                  &\quad + \underbrace{\left( 2B^s(I_h u, I_h u, I_h u, w_h) - 3B^s(I_h u, I_h u, v_h, w_h) + B^s(v_h,v_h, v_h, w_h)\right)}_{\eqqcolon \mathfrak{N}_4}
    \end{align*}
    In what follows, we give upper bounds for each $\mathfrak{N}_i, i=1,2,3,4$.
    Using the boundedness of $A^s_h+P^s_h, C^s$ and the interpolation estimate \cite[Eq.\ (5.3)]{brenner-2005-article} in the $\vertiii{\cdot}$-norm, we obtain
    \begin{equation*}
        \begin{aligned}
        &\mathfrak{N}_1 \lesssim \vertiii{I_h u -u}_h \vertiii{w_h}_{h} \lesssim h^{\min\{\mathrm{deg}-1, \Bbbk_u-2\}} \vertiii{w_h}_{h},\\
        &\mathfrak{N}_2 \lesssim\vertiii{I_h u -u}_h \vertiii{w_h}_{h} \lesssim h^{\min\{\mathrm{deg}-1, \Bbbk_u-2\}} \vertiii{w_h}_{h}.\\
        \end{aligned}
    \end{equation*}
    We rearrange terms in $\mathfrak{N}_3$ and use the boundedness result \cref{lem:bound-BandC} and the interpolation result \cite[Eq.\ (5.3)]{brenner-2005-article} to obtain
        \begin{align*}
            \mathfrak{N}_3 
                           &= B^s(I_h u-u, I_h u-u, I_h u, w_h) + 2B^s(I_h u-u, I_h u-u, u, w_h) + 3B^s(u, u, I_h u-u, w_h)\\
                           &\lesssim \left(\vertiii{I_h u-u}_h^2\vertiii{I_hu}_h 
                           + \vertiii{I_hu-u}_h^2\vertiii{u}_h + \|u\|_2^2\|I_hu-u\|_0 \right) \vertiii{w_h}_{h}\\
                           &\lesssim \left( h^{2\min\{\mathrm{deg}-1,\Bbbk_u-2\}} + h^{\min\{\mathrm{deg}+1,\Bbbk_u\}}\right) \vertiii{w_h}_{h}.
    \end{align*}
    Let $e_I = v_h - I_h u$. We use the definition of $B^s(\cdot,\cdot,\cdot,\cdot)$ and its boundedness to deduce that
        \begin{align*}
            \mathfrak{N}_4 
                           &= a_3 \int_\Omega \left\{ 2(I_h u)^3 w_h -3(I_hu)^2v_hw_h +v_h^3 w_h\right\}\\
                           &= a_3\int_\Omega \left\{ \left(v_h^2-(I_hu)^2\right)v_hw_h + 2(I_hu)^2 (I_hu - v_h)w_h \right\}\\
                           &= a_3\int_\Omega \left\{ e_I(e_I+2I_hu)(e_I+I_hu)w_h -2(I_hu)^2 e_Iw_h \right\}\\
                           &= a_3\int_\Omega \left\{e_I \left(e_I^2 +3e_I I_hu + 2 (I_hu)^2\right)w_h - 2 (I_hu)^2 e_I w_h\right\} \\
                           &= a_3\int_\Omega \left(e_I^3 +3e_I^3 I_hu\right)w_h
                           = B^s(e_I,e_I,e_I,w_h)+3B^s(e_I,e_I,I_hu,w_h) \\
                           &\lesssim \vertiii{e_I}_h^2 \left(\vertiii{e_I}_h+\vertiii{I_hu}_h\right) \vertiii{w_h}_h.
        \end{align*}
    Hence, we combine the above bounds for $\mathfrak{N}_i$, $i=1,2,3,4$ to have
        \begin{equation*}
        \langle D \mathcal{N}^s_h(I_h u) (I_h u - \mu_h(v_h)), w_h\rangle
        \lesssim \left(h^{\min\{\mathrm{deg}-1,\Bbbk_u-2\}} + h^{\min\{2\mathrm{deg}-2,2\Bbbk_u-4,\mathrm{deg}+1,\Bbbk_u\}} + \vertiii{e_I}_h^2 \left(\vertiii{e_I}_h+1\right) \right) \vertiii{w_h}_h.
        \end{equation*}
    By the inf-sup condition \cref{eq:weak-coer-per} for the perturbed bilinear form, we further deduce that there exists a $w_h\in W_h$ with $\vertiii{w_h}_h=1$ such that
    $
        \vertiii{I_hu - \mu_h(v_h)}_h \lesssim \langle D \mathcal{N}^s_h(I_h u) (I_h u - \mu_h(v_h)), w_h\rangle.
    $
    Since $\vertiii{e_I}_h\le R(h)$, we obtain
        \begin{align*}
            &\vertiii{I_hu - \mu_h(v_h)}_h \lesssim \left(h^{\min\{\mathrm{deg}-1,\Bbbk_u-2\}} + h^{\min\{2\mathrm{deg}-2,2\Bbbk_u-4,\mathrm{deg}+1,\Bbbk_u\}} + R(h)^2 \left(R(h)+1\right) \right)\\
                                          &\quad \le 
                                          \begin{cases}
                                              C_u\left( 2h^{\min\{\mathrm{deg}-1,\Bbbk_u-2\}} + R(h)^2(1+R(h))\right) & \text{for }2\le \mathrm{deg}\le 3,\Bbbk_u\le 4,\\
                                              C_u \left( h^{\min\{\mathrm{deg}-1,\Bbbk_u-2\}} + h^{\min\{\mathrm{deg}+1,2\Bbbk_u-4\}} + R(h)^2(1+R(h))\right) &\text{for }\mathrm{deg}> 3,\Bbbk_u\le 4.
                                          \end{cases}
        \end{align*}
    Note that there are other cases when $\Bbbk_u >4$ and we only focus on the case of $\Bbbk_u\le 4$ here for brevity.
    The idea of the remainder of the proof is to choose an appropriate $R(h)$ so that $\vertiii{I_hu - \mu_h(v_h)}_h\le R(h)$.
    For simplicity of the calculation, we illustrate the case when $2\le \mathrm{deg}\le 3,\Bbbk_u\le 4$.
    To this end, we take $R(h)= 4C_u h^{\min\{\mathrm{deg}-1,\Bbbk_u-2\}}$ and choose $h$ satisfying
$
    h^{2\min\{\mathrm{deg}-1,\Bbbk_u-2\}} \le \frac{1}{32C_u}-\frac{1}{16}.
$
This yields
        \begin{align*}
            \vertiii{I_hu - \mu_h(v_h)}_h &\le 2C_u h^{\min\{\mathrm{deg}-1,\Bbbk_u-2\}} \left(1+C_u R(h)^2 + C_u \right) \\
                                          &= 2C_u h^{\min\{\mathrm{deg}-1,\Bbbk_u-2\}} \left(1+ 32C_u^3 h^{2\min\{\mathrm{deg}-1,\Bbbk_u-2\}} + 2C_u \right)
                                          \le R(h).
        \end{align*}
This completes the proof.
\end{proof}

\begin{lemma}
    \label{lem:contraction}
    (Contraction result)
    For a sufficiently large $\epsilon$, a sufficiently small mesh size $h$ and any $v_1,v_2\in \mathcal{B}_{R(h)}(I_h u)$, there holds
    \begin{equation}
        \vertiii{\mu_h(v_1) - \mu_h(v_2)}_{h} \lesssim h^{\min\{\mathrm{deg}-1,\Bbbk_u-2\}} \vertiii{v_1-v_2}_h.
    \end{equation}
\end{lemma}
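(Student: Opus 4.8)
The plan is to mimic the structure of the proof of \cref{lem:mappingball}: use the defining relation \cref{eq:nonlinearmap} of $\mu_h$ to express $\mu_h(v_1)-\mu_h(v_2)$ through the perturbed bilinear form $\langle\mathcal{D}\mathcal{N}^s_h(I_h u)\cdot,\cdot\rangle$, estimate the resulting cubic terms via \cref{lem:bound-BandC}, and conclude with the discrete inf-sup condition of \cref{thm:weak-coer-per}.

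First I would subtract the two instances of \cref{eq:nonlinearmap} for $v_1$ and $v_2$. Since $L^s(w_h)$ does not depend on the first argument and $\langle\mathcal{D}\mathcal{N}^s_h(I_h u)\cdot,w_h\rangle$ is linear, this gives, for all $w_h\in W_{h,0}$,
\begin{equation*}
\langle\mathcal{D}\mathcal{N}^s_h(I_h u)(\mu_h(v_1)-\mu_h(v_2)),w_h\rangle = 3B^s(I_h u, I_h u, v_1-v_2, w_h) - B^s(v_1,v_1,v_1,w_h) + B^s(v_2,v_2,v_2,w_h).
\end{equation*}
The key algebraic step is to rewrite the right-hand side so that the factor $v_1-v_2$ appears explicitly. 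Setting $e_i = v_i - I_h u$ (so that $\vertiii{e_i}_h\le R(h)$ and $e_1-e_2 = v_1-v_2$) and using $v_1^3-v_2^3 = (v_1-v_2)(v_1^2+v_1v_2+v_2^2)$ together with $v_i = I_h u + e_i$, one computes
\begin{equation*}
3(I_h u)^2 - (v_1^2+v_1v_2+v_2^2) = -3(I_h u)(e_1+e_2) - (e_1^2+e_1 e_2+e_2^2),
\end{equation*}
so that, recalling $B^s(\mu,\zeta,\eta,\xi)=a_3\int_\Omega \mu\zeta\eta\xi$,
\begin{multline*}
\langle\mathcal{D}\mathcal{N}^s_h(I_h u)(\mu_h(v_1)-\mu_h(v_2)),w_h\rangle = -3B^s(v_1-v_2, I_h u, e_1+e_2, w_h) \\
- B^s(v_1-v_2,e_1,e_1,w_h) - B^s(v_1-v_2,e_1,e_2,w_h) - B^s(v_1-v_2,e_2,e_2,w_h).
\end{multline*}

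Next I would bound each of the four $B^s$ terms using \cref{lem:bound-BandC}. The last three are each $\lesssim \vertiii{v_1-v_2}_h\, R(h)^2\, \vertiii{w_h}_h$. For the first, I use $\vertiii{e_1+e_2}_h\le 2R(h)$ together with the uniform bound $\vertiii{I_h u}_h \lesssim \vertiii{I_h u-u}_h + |u|_2 \lesssim \|u\|_{\Bbbk_u}$, which holds because $u\in H^{\Bbbk_u}(\Omega)$ with $\Bbbk_u\ge 4$ makes $\llbracket\nabla u\rrbracket$ vanish, and by the interpolation estimate \cite[Eq.\ (5.3)]{brenner-2005-article}. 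This yields
\begin{equation*}
\left|\langle\mathcal{D}\mathcal{N}^s_h(I_h u)(\mu_h(v_1)-\mu_h(v_2)),w_h\rangle\right| \lesssim \left(R(h)+R(h)^2\right)\vertiii{v_1-v_2}_h\vertiii{w_h}_h,
\end{equation*}
and since $R(h) = 4C_u h^{\min\{\mathrm{deg}-1,\Bbbk_u-2\}}$ the prefactor is $\lesssim h^{\min\{\mathrm{deg}-1,\Bbbk_u-2\}}$ for $h$ small. Finally, applying the inf-sup condition \cref{eq:weak-coer-per} to $\mu_h(v_1)-\mu_h(v_2)\in W_{h,0}$ and choosing $w_h$ with $\vertiii{w_h}_h=1$ realising the supremum gives $\frac{\beta_c}{2}\vertiii{\mu_h(v_1)-\mu_h(v_2)}_h\lesssim h^{\min\{\mathrm{deg}-1,\Bbbk_u-2\}}\vertiii{v_1-v_2}_h$, which is the claim.

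There is no serious obstacle here: every estimate is a direct application of results already established. The only step requiring genuine care is the combinatorial bookkeeping of the cubic difference — getting the signs and the coefficients $3$ and $1$ right — together with the observation that the seemingly dangerous term $B^s(v_1-v_2, I_h u, e_1+e_2, w_h)$ is in fact small, because its ``bad'' factor $e_1+e_2$ carries a power of $R(h)$, so the whole expression is $O(h^{\min\{\mathrm{deg}-1,\Bbbk_u-2\}})$ rather than $O(1)$. For $h$ sufficiently small this constant is strictly less than $1$, so $\mu_h$ is a contraction on $\mathcal{B}_{R(h)}(I_h u)$, which together with \cref{lem:mappingball} completes the setup for the Brouwer fixed-point argument.
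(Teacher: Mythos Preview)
Your proof is correct and follows essentially the same approach as the paper: subtract the two instances of \cref{eq:nonlinearmap}, algebraically rewrite the cubic difference so that every term contains a factor $v_1-v_2$ and at least one factor of $e_1$ or $e_2$, bound via \cref{lem:bound-BandC}, and conclude with the inf-sup condition \cref{eq:weak-coer-per}. Your use of the factorisation $v_1^3-v_2^3=(v_1-v_2)(v_1^2+v_1v_2+v_2^2)$ is a slightly cleaner route to the same decomposition than the paper's longer explicit expansion, but the resulting estimate $\lesssim (R(h)+R(h)^2)\vertiii{v_1-v_2}_h$ and the conclusion are identical.
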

\begin{proof}
    For $w_h \in W_h$, we use the definition \cref{eq:nonlinearmap} of the nonlinear map $\mu_h$, the definition \cref{eq:def-per-bilinear} and linearity of $\langle \mathcal{D}\mathcal{N}_h^s(I_h u)\cdot, \cdot\rangle$ to calculate
        \begin{align*}
            \langle &\mathcal{D}\mathcal{N}^s_h (I_h u)(\mu_h(v_1) - \mu_h(v_2)), w_h\rangle \\
            &= 3B^s(I_h u, I_h u, v_1, w_h) - B^s(v_1,v_1,v_1,w_h)
            - 3 B^s(I_h u, I_h u, v_2, w_h) + B^s(v_2,v_2,v_2,w_h)\\
            &= a_3\int_\Omega \left( 3(I_hu)^2v_1w_h -v_1^3 w_h\right) - a_3\int_\Omega \left(3(I_hu)^2v_2w_h -v_2^3 w_h\right)\\
            &= a_3\int_\Omega \left(\left((I_hu)^2-v_1^2\right)v_1w_h + 2(I_hu)^2 (v_1-v_2)w_h - \left((I_hu)^2-v_2^2\right)v_2w_h\right) \\
            &= a_3\int_\Omega ( (I_hu-v_1)(v_1-I_hu)(v_1-v_2) w_h + 2(I_hu-v_1)I_hu(v_1-v_2)w_h 
            + (I_hu-v_1)(I_hu+v_1)v_2w_h )\\
            &\quad +2a_3\int_\Omega \left(I_h u(v_1-v_2)(I_hu-v_2)w_h + I_hu(v_1-v_2)v_2w_h\right)
            - a_3\int_\Omega (I_hu-v_2)(I_hu+v_2)v_2w_h\\
            &= a_3 \int_\Omega (I_hu-v_1)(v_1-I_hu)(v_1-v_2)w_h + 2a_3\int_\Omega (I_hu-v_1)I_hu (v_1-v_2)w_h\\
            &\quad +2a_3\int_\Omega (I_hu-v_2)I_hu(v_1-v_2)w_h
            +a_3\int_\Omega (v_1-v_2)\left((I_hu-v_1)+(I_hu-v_2)\right) \left((v_2-I_hu)+I_hu\right)w_h.
        \end{align*}
    Let $e_1=I_hu-v_1$, $e_2=I_hu-v_2$ and $e=v_1-v_2$.
    We make some elementary manipulations and use the boundedness of $B^s$ and the inequality of geometric and arithmetic means to yield
        \begin{align*}
            \langle &\mathcal{D}\mathcal{N}^s_h (I_h u)(\mu_h(v_1) - \mu_h(v_2)), w_h\rangle \\
                                               &= a_3\int_\Omega (-e_1^2)e w_h + 2a_3\int_\Omega e_1(I_hu)e w_h + 2a_3\int_\Omega e_2 (I_hu) ew_h
                                               + a_3\int_\Omega \{ e w_h (e_1 I_h u + e_2 I_h u - e_1e_2 -e_2^2)\}\\
                                               &\lesssim \left(\vertiii{e_1}_h^2 +\vertiii{I_hu}_h\vertiii{e_1}_h + \vertiii{e_2}_h\vertiii{I_hu}_h + \vertiii{e_1}_h\vertiii{e_2}_h + \vertiii{e_2}_h^2\right) \vertiii{e}_h\vertiii{w_h}_h\\
                                               &\lesssim \left( \vertiii{e_1}_h^2 + \vertiii{e_2}_h^2 + \vertiii{e_1}_h+\vertiii{e_2}_h\right)\vertiii{e}_h\vertiii{w_h}_h
                                               \lesssim \left(R(h)^2+R(h)\right) \vertiii{e}_h\vertiii{w_h}_h.
        \end{align*}
    By the inf-sup condition \cref{eq:weak-coer-per}, we know that there exists $w_h\in W_h$ with $\vertiii{w_h}_h=1$ such that
    \begin{equation*}
        \frac{\beta_c}{2} \vertiii{\mu_h(v_1) - \mu_h(v_2)}_h \lesssim \langle \mathcal{D}\mathcal{N}^s_h (I_h u)(\mu_h(v_1) - \mu_h(v_2)), w_h\rangle.
    \end{equation*}
    Therefore, we have
    $
        \vertiii{\mu_h(v_1) - \mu_h(v_2)}_h \lesssim R(h)(1+R(h))\vertiii{e}_h.
    $
    Noting that $R(h)(1+R(h))<1$ for $0<R(h)<\frac{1}{2}$ completes the proof.
\end{proof}

The existence and local uniqueness of the discrete solution $u_h$ can now be obtained via the application of Brouwer's fixed point theorem \cite{kesavan-1989-book}.

\begin{theorem}
    \label{thm:main1u}
    (Convergence in $\vertiii{\cdot}_h$-norm)
    Let $u$ be a regular isolated solution of the nonlinear problem \cref{eq:weak-uorig}.
    For a sufficiently large $\epsilon$ and a sufficiently small $h$, there exists a unique solution $u_h$ of the discrete nonlinear problem \cref{eq:weak-u} within the local ball $\mathcal{B}_{R(h)}(I_h u)$.
    Furthermore, we have
    $
        \vertiii{ u - u_h }_h \lesssim h^{\min\{\deg-1,\Bbbk_u-2\}}.
    $
\end{theorem}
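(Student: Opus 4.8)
The plan is to derive both conclusions — existence and local uniqueness of $u_h$, and the error bound $\vertiii{u-u_h}_h \lesssim h^{\min\{\deg-1,\Bbbk_u-2\}}$ — by assembling the fixed-point machinery already in place. First I would record that a fixed point of $\mu_h$ lying in $\mathcal{B}_{R(h)}(I_h u)$ is exactly a solution of the discrete problem \cref{eq:weak-u}: setting $v_h = u_h = \mu_h(u_h)$ in the defining relation \cref{eq:nonlinearmap} and expanding its left-hand side through \cref{eq:def-per-bilinear}, the two occurrences of $3B^s(I_h u, I_h u, u_h, w_h)$ cancel, leaving $A^s_h(u_h,w_h) + P^s_h(u_h,w_h) + B^s(u_h,u_h,u_h,w_h) + C^s(u_h,w_h) = L^s(w_h)$ for all $w_h\in W_{h,0}$, which is \cref{eq:weak-u}.

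For existence, I would note that $\mathcal{B}_{R(h)}(I_h u)$ is a closed, bounded, convex subset of the finite-dimensional affine space $I_h u + W_{h,0}$. By \cref{lem:mappingball}, for $\epsilon$ large and $h$ small, $\mu_h$ maps this ball into itself, and by \cref{lem:contraction} (whose conclusion is a genuine contraction once $h$ is small enough that $R(h)<\tfrac12$ and the prefactor $h^{\min\{\deg-1,\Bbbk_u-2\}}$ is less than one) $\mu_h$ is Lipschitz, hence continuous, on the ball. Brouwer's fixed point theorem \cite{kesavan-1989-book} then yields a fixed point $u_h\in\mathcal{B}_{R(h)}(I_h u)$, which by the previous paragraph solves \cref{eq:weak-u}. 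Local uniqueness within the ball follows directly from the strict contraction: if $u_h,\tilde u_h\in\mathcal{B}_{R(h)}(I_h u)$ were both fixed points, then $\vertiii{u_h-\tilde u_h}_h \le c\, h^{\min\{\deg-1,\Bbbk_u-2\}}\vertiii{u_h-\tilde u_h}_h$ with $c\,h^{\min\{\deg-1,\Bbbk_u-2\}}<1$, forcing $u_h=\tilde u_h$.

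For the error estimate, since $u_h\in\mathcal{B}_{R(h)}(I_h u)$ with $R(h)=4C_u h^{\min\{\deg-1,\Bbbk_u-2\}}$ from \cref{lem:mappingball}, the triangle inequality gives
\[
    \vertiii{u-u_h}_h \le \vertiii{u-I_h u}_h + \vertiii{I_h u - u_h}_h \le \vertiii{u-I_h u}_h + R(h).
\]
The interpolation error is controlled by the standard mesh-dependent-norm estimate \cite[Eq.~(5.3)]{brenner-2005-article}, $\vertiii{u-I_h u}_h \lesssim h^{\min\{\deg-1,\Bbbk_u-2\}}$, using the assumed regularity $u\in H^{\Bbbk_u}(\Omega)$; combining with the bound on $R(h)$ gives $\vertiii{u-u_h}_h \lesssim h^{\min\{\deg-1,\Bbbk_u-2\}}$.

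I do not expect a real obstacle here: the theorem is an assembly of \cref{lem:mappingball,lem:contraction} with a routine check that fixed points solve the discrete equation and a triangle inequality against the interpolant. The only points needing care are verifying that $\mathcal{B}_{R(h)}(I_h u)$ is an admissible domain for Brouwer (finite-dimensional, closed, convex, $\mu_h$-invariant, with $\mu_h$ continuous on it) and that $h$ is taken small enough for \cref{lem:contraction} to deliver an actual contraction — both automatic once $h$ is sufficiently small. All the genuine analytical difficulty was already absorbed into the inf-sup condition \cref{thm:weak-coer-per} and the two fixed-point lemmas.
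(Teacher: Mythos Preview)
Your proposal is correct and follows essentially the same route as the paper: invoke \cref{lem:mappingball} for the self-map property, \cref{lem:contraction} for the contraction, apply Brouwer's fixed point theorem for existence and the contraction for uniqueness, then combine $\vertiii{I_h u-u_h}_h\le R(h)$ with the interpolation estimate \cite[Eq.~(5.3)]{brenner-2005-article} via the triangle inequality. Your explicit verification that a fixed point of $\mu_h$ indeed solves \cref{eq:weak-u} (by cancelling the $3B^s(I_h u,I_h u,\cdot,\cdot)$ terms) is a detail the paper leaves implicit, and your check that the ball is closed, convex, finite-dimensional and $\mu_h$-invariant is likewise more carefully spelled out than in the paper's version.
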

\begin{proof}
    A use of \cref{lem:mappingball} yields that the nonlinear map $\mu_h$ maps a closed convex set $\mathcal{B}_{R(h)}(I_h u)\subset W_h$ to itself.
    Moreover it is a contracting map.
    Therefore, an application of Brouwer's fixed point theorem yields that $\mu_h$ has at least one fixed point, say $u_h$, in this ball $\mathcal{B}_{R(h)}(I_h u)$.
    The uniqueness of the solution to \cref{eq:weak-u} in that ball $\mathcal{B}_{R(h)}(I_h u)$ follows from the contraction result in \cref{lem:contraction}.
    Meanwhile, we have by \cref{lem:mappingball} that
    \begin{equation}
        \label{eq:difference}
        \vertiii{u_h - I_h u}_h \lesssim h^{\min\{\deg-1,\Bbbk_u-2\}}.
    \end{equation}
    The error estimate is then obtained straightforwardly using the triangle inequality
    $
        \vertiii{u-u_h}_h \le \vertiii{u-I_hu}_h + \vertiii{I_hu-u_h}_h
    $
    combined with \cref{eq:difference} and the interpolation estimate \cite[Eq.\ (5.3)]{brenner-2005-article}.
\end{proof}

It follows from \cref{thm:main1u} that optimal convergence rates are achieved in the mesh-dependent norm $\vertiii{\cdot}_h$.
This will be numerically verified in \cref{sec:num}.

\subsubsection{Estimates in the $L^2$-norm}

We derive an $L^2$ error estimate using a duality argument in this subsection.
To this end, we consider the following linear dual problem to the primal nonlinear problem \cref{eq:strong-u}:
\begin{equation}
    \label{eq:strong-dual-u}
    \begin{cases}
        2B \nabla\cdot(\nabla\cdot (\mathcal{D}^2\chi)) + a_1 \chi + 3a_3 u^2\chi = f_{dual} \quad & \text{in }\Omega,\\
        \chi = 0,\quad 
        \mathcal{D}^2\chi=\mathbf{0} \quad &\text{on }\partial\Omega,
    \end{cases}
\end{equation}
for $f_{dual}\in L^2(\Omega)$.
For smooth domains $\Omega$, it can be deduced by a classical elliptic regularity result that $\chi\in H^4(\Omega)$.
The corresponding weak form is: find $\chi\in H^2(\Omega)\cap H^1_0(\Omega)$ such that
\begin{equation*}
    2B\int_\Omega \mathcal{D}^2 \chi\colon \mathcal{D}^2 v + a_1\int_\Omega \chi v + 3a_3\int_\Omega u^2\chi v = \int_\Omega f_{dual} v\quad \forall v\in H^2(\Omega)\cap H^1_0(\Omega),
\end{equation*}
that is to say,
\begin{equation}
    \label{eq:weak-dual-u}
    \langle \mathcal{D}\mathcal{N}^s(u) \chi,v\rangle_{H^2} = \langle \mathcal{D}\mathcal{N}^s_h(u)\chi,v\rangle = ( f_{dual},v)_0.
\end{equation}
\begin{remark}
    The first equality in \cref{eq:weak-dual-u} holds since $u\in H^2(\Omega), \chi\in H^2(\Omega)$ and $v\in H^2(\Omega)$.
\end{remark}

We give two auxiliary results in the following.

\begin{lemma}
    \label{lem:H4-As}
    For $u\in H^{\Bbbk_u}(\Omega)$, $\Bbbk_u>2$, $\chi\in H^4(\Omega)\cap H^1_0(\Omega)$ and $I_h u \in W_{h,0}\subset H^1_0(\Omega)$, there holds that
    \begin{equation*}
        A^s_h(I_h u-u, \chi) + P^s_h(I_h u -u,\chi) \lesssim h^{\min\{\deg+1,\Bbbk_u\}} \|\chi\|_4.
    \end{equation*}
\end{lemma}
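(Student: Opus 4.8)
The plan is to exploit the fact that $\chi \in H^4(\Omega)$, so that $\llbracket \nabla \chi \rrbracket = 0$ on every interior edge/facet. This kills both the penalty contribution $P^s_h(I_h u - u, \chi)$ (which contains a factor $\llbracket \nabla \chi \rrbracket$) and the third term in $A^s_h$ (which contains $\llbracket \nabla \chi \rrbracket$ via $\llbrace \partial^2_\nu(I_h u - u)\rrbracket \,\llbracket \nabla \chi\rrbracket$ — wait, more carefully: the term $\sum_e \int_e \llbrace \partial^2_\nu \chi \rrbracket \llbracket \nabla(I_h u - u)\rrbracket$ survives since $I_h u - u$ need not have continuous gradient, but the term $\sum_e \int_e \llbrace \partial^2_\nu (I_h u - u)\rrbracket\, \llbracket \nabla \chi\rrbracket$ vanishes). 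So after this simplification,
\begin{equation*}
A^s_h(I_h u - u, \chi) + P^s_h(I_h u - u, \chi) = 2B\Big( \sum_{T\in\mathcal{T}_h} \int_T \mathcal{D}^2(I_h u - u)\colon \mathcal{D}^2\chi - \sum_{e\in\mathcal{E}_I} \int_e \llbrace \frac{\partial^2\chi}{\partial\nu^2}\rrbrace \llbracket \nabla(I_h u - u)\rrbracket \Big).
\end{equation*}

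Next I would integrate by parts element-by-element to move derivatives off $I_h u - u$ and onto $\chi$, reversing the identity \cref{eq:ibp}: since $\chi \in H^4(\Omega)$ we get
\begin{equation*}
\sum_{T} \int_T \mathcal{D}^2(I_h u - u)\colon \mathcal{D}^2\chi - \sum_{e\in\mathcal{E}_I}\int_e \llbrace \frac{\partial^2\chi}{\partial\nu^2}\rrbrace \llbracket \nabla(I_h u - u)\rrbracket = \sum_T \int_T (I_h u - u)\, \nabla\cdot(\nabla\cdot \mathcal{D}^2\chi),
\end{equation*}
using that $I_h u - u \in H^1_0(\Omega)$ so the boundary integrals vanish and the jump terms generated by the two integrations by parts on $\nabla\cdot\mathcal{D}^2\chi$ are continuous across interior edges (again because $\chi \in H^4$). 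Then Cauchy--Schwarz gives the bound $\lesssim \|I_h u - u\|_{0} \,\|\nabla\cdot(\nabla\cdot\mathcal{D}^2\chi)\|_{0} \lesssim \|I_h u - u\|_0 \,\|\chi\|_4$, and the standard $L^2$ interpolation estimate $\|I_h u - u\|_0 \lesssim h^{\min\{\deg+1, \Bbbk_u\}}\|u\|_{\Bbbk_u}$ finishes the proof.

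The main obstacle is bookkeeping the boundary and jump terms generated in the integration by parts: one must verify carefully that when passing derivatives from $I_h u - u$ onto $\chi$, all the interior-edge jump terms that appear involve only traces of derivatives of $\chi$ (hence vanish by $\chi \in H^4(\Omega)$) and that all boundary terms vanish because $I_h u - u \in H^1_0(\Omega)$ and $\chi$ satisfies homogeneous Dirichlet and Hessian boundary conditions. This is precisely the computation behind \cref{eq:ibp} run in reverse, so it is routine but needs care with the normal/tangential decomposition of $\mathcal{D}^2$ on each face. An alternative, slicker route avoiding integration by parts altogether is to write $A^s_h(I_h u - u, \chi) + P^s_h(I_h u - u, \chi) = A^s_h(I_h u - u, \chi - E_h' I_h u') + \dots$ — but I judge the direct integration-by-parts argument cleaner here, since the regularity $\chi \in H^4(\Omega)$ is exactly what makes it work.
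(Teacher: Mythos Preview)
Your proposal is correct and follows essentially the same approach as the paper: use $\chi\in H^4(\Omega)$ to kill the two terms containing $\llbracket\nabla\chi\rrbracket$, then reverse the integration-by-parts identity \cref{eq:ibp} to reduce everything to $\sum_T\int_T (I_hu-u)\,\nabla\cdot(\nabla\cdot\mathcal{D}^2\chi)$, and finish with Cauchy--Schwarz and the $L^2$ interpolation estimate. The paper's proof is exactly this computation, written out line by line; your anticipated ``bookkeeping'' concern about the jump and boundary terms is handled precisely by appealing to \cref{eq:ibp} (with the roles of $u$ and test function swapped), just as you suggest.
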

\begin{proof}
    Note that $\llbracket \nabla\chi\rrbracket=0$ since $\chi\in H^4(\Omega)$ and $\chi=0$ on $\partial\Omega$.
    We calculate
        \begin{align*}
            A^s(I_h u-u, \chi)+P^s_h(I_h u-u,\chi) 
               &= \sum_{T\in\mathcal{E}_h}\int_T 2B \mathcal{D}^2 (I_h u -u)\colon \mathcal{D}^2 \chi 
                                                   - 2B\sum_{e\in \mathcal{E}_I} \llbrace \frac{\partial^2 (I_hu-u)}{\partial\nu^2}\rrbrace \llbracket \nabla \chi\rrbracket \\
               &\quad - 2B \sum_{e\in \mathcal{E}_I} \llbrace \frac{\partial^2 \chi}{\partial\nu^2}\rrbrace \llbracket \nabla (I_hu-u)\rrbracket
                                                   +\sum_{e\in \mathcal{E}_I}\frac{2B \epsilon}{h_e^3}\int_{e} \llbracket \nabla (I_hu-u) \rrbracket \llbracket \nabla \chi \rrbracket\\
                                                   &= \sum_{T\in\mathcal{E}_h}\int_T 2B \mathcal{D}^2 (I_h u -u)\colon \mathcal{D}^2 \chi - 2B \sum_{e\in \mathcal{E}_I} \llbrace \frac{\partial^2 \chi}{\partial\nu^2}\rrbrace \llbracket \nabla (I_hu-u)\rrbracket\\
                                                   &= \sum_{T\in\mathcal{E}_h}\int_T 2B (I_hu-u)\nabla\cdot(\nabla\cdot(\mathcal{D}^2 \chi)) \\
                                                   &\lesssim \|I_h u -u\|_0 \|\nabla\cdot(\nabla\cdot(\mathcal{D}^2 \chi))\|_0
                                                   \lesssim h^{\min\{\deg+1,\Bbbk_u\}}\|\chi\|_4.
        \end{align*}
    Here, the last, second last, and third last steps follow from standard interpolation estimates, the Cauchy--Schwarz inequality, and integration by parts twice, respectively.
\end{proof}

\begin{lemma}
    \label{lem:frhs-bound}
    The solution $\chi$ of the linear dual problem \cref{eq:strong-dual-u} belongs to $H^4(\Omega)$ on a smooth domain $\Omega$ and it holds that
    \begin{equation}
        \label{eq:bdd-chi}
        \|\chi\|_4 \lesssim \|f_{dual}\|_0.
    \end{equation}
\end{lemma}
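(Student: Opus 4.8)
The plan is a two-step bootstrap argument. First I would extract an $H^2(\Omega)$ a priori bound on $\chi$ directly from the weak formulation \cref{eq:weak-dual-u}; then, regarding the zeroth-order terms as data, I would invoke the elliptic regularity estimate for the bi-Hessian operator to promote this to the claimed $H^4(\Omega)$ bound.

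\emph{Step 1 ($H^2$ bound).} The linearised operator $\langle \mathcal{D}\mathcal{N}^s(u)\cdot,\cdot\rangle_{H^2}$ is symmetric, so the dual problem \cref{eq:weak-dual-u} is governed by the same bilinear form. That form is bounded on $H^2(\Omega)\cap H^1_0(\Omega)$: $A^s$ is bounded by \cref{lem:As-coer-bound}, $|C^s(v,w)| \le |a_1|\,\|v\|_0\|w\|_0$, and $|3B^s(u,u,v,w)| \le 3a_3\|u\|_\infty^2\|v\|_0\|w\|_0 \lesssim |v|_2|w|_2$; it also satisfies the inf-sup condition \cref{eq:inf-sup-DN-smec}. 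Since $v\mapsto (f_{dual},v)_0$ is bounded on $H^2(\Omega)\cap H^1_0(\Omega)$ with $|(f_{dual},v)_0|\le \|f_{dual}\|_0\|v\|_0 \lesssim \|f_{dual}\|_0|v|_2$, and $|\cdot|_2$ is a norm on this space equivalent to $\|\cdot\|_2$ (the remark after \cref{lem:As-coer-bound}), the standard well-posedness theory for inf-sup stable problems gives a unique $\chi\in H^2(\Omega)\cap H^1_0(\Omega)$ solving \cref{eq:weak-dual-u} with $\|\chi\|_2 \lesssim \|f_{dual}\|_0$.

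\emph{Step 2 (bootstrap to $H^4$).} Setting $g \coloneqq f_{dual} - a_1\chi - 3a_3 u^2\chi$, the function $\chi$ solves $2B\left(\mathcal{D}^2\colon\mathcal{D}^2\right)\chi = g$ in $\Omega$ with $\chi = 0$ and $\mathcal{D}^2\chi\cdot\nu = \mathbf{0}$ on $\partial\Omega$. Since $d\in\{2,3\}$, the embedding $H^2(\Omega)\hookrightarrow L^\infty(\Omega)$ holds, so $\|u\|_\infty \lesssim \|u\|_2$; combining this with Step 1 gives $\|g\|_0 \le \|f_{dual}\|_0 + |a_1|\,\|\chi\|_0 + 3a_3\|u\|_\infty^2\|\chi\|_0 \lesssim \left(1 + \|u\|_2^2\right)\|f_{dual}\|_0 \lesssim \|f_{dual}\|_0$. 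Recalling $\mathcal{D}^2\colon\mathcal{D}^2 = \Delta^2$, the elliptic regularity estimate for the biharmonic operator on the smooth domain $\Omega$ (the extension of \cite{blum-1980-article} discussed above) applied to $\chi$ yields $\chi\in H^4(\Omega)$ and $\|\chi\|_4 \lesssim \|g\|_0 \lesssim \|f_{dual}\|_0$, which is \cref{eq:bdd-chi}.

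\emph{Main obstacle.} The hard part will be ensuring the bootstrap is correctly seeded: the $H^2$ estimate must be obtained \emph{without} appealing to higher regularity, which is precisely what the inf-sup condition \cref{eq:inf-sup-DN-smec} provides (and the reason the regular-isolated-solution hypothesis on $u$ was imposed). The only other delicate point is checking that the natural boundary condition $\mathcal{D}^2\chi\cdot\nu = \mathbf{0}$ inherited from $(\mathcal{P}1)$ lies within the scope of the biharmonic regularity result on smooth domains; granting that, the product bound for $u^2\chi$ is immediate, since it uses only $\chi\in L^2(\Omega)$ and $u\in L^\infty(\Omega)$.
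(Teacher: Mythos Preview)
Your proposal is correct and follows essentially the same two-step bootstrap as the paper: first obtain $\|\chi\|_2\lesssim\|f_{dual}\|_0$ from the inf-sup condition \cref{eq:inf-sup-DN-smec} applied to the weak dual problem, then move the zeroth-order terms $a_1\chi+3a_3u^2\chi$ to the right-hand side and invoke biharmonic regularity on a smooth domain to upgrade to $\|\chi\|_4\lesssim\|f_{dual}\|_0$. The paper's version is slightly terser (it bounds $\|\nabla\cdot(\nabla\cdot\mathcal{D}^2\chi)\|_0$ directly and cites a bootstrapping argument), but the logic is identical.
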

\begin{proof}
    We can use the inf-sup condition \cref{eq:inf-sup-DN-smec} for the linear operator $\langle \mathcal{D}\mathcal{N}^s(u)\cdot, \cdot\rangle$, the weak form \cref{eq:weak-dual-u} and the Cauchy--Schwarz inequality to obtain
    \begin{equation}
        \label{eq:chi2}
        |\chi |_2 \lesssim \sup_{\overset{w\in H^2\cap H^1_0}{|w|_2=1}} \langle \mathcal{D}\mathcal{N}^s(u) \chi, w \rangle_{H^2} =\sup_{\overset{w\in H^2\cap H^1_0}{|w|_2=1}} (f_{dual},w)_0 \lesssim \|f_{dual}\|_0 \underbrace{\|w\|_0}_{\lesssim |w|_2=1}.
    \end{equation}
    By the form of \cref{eq:weak-dual-u}, the boundedness of $B^s(u,u,\cdot,\cdot)$ and $C^s(\cdot,\cdot)$, and \cref{eq:chi2}, we have
    \begin{equation}
        \label{eq:hesheschi}
        \begin{aligned}
            \|\nabla\cdot(\nabla\cdot (\mathcal{D}^2\chi))\|_0 &= \|-3B^s(u,u,\chi,\cdot) - C^s(\chi,\cdot)+(f_{dual},\cdot)_0\|_0
                                                               \lesssim {\|\chi\|_0}+\|f_{dual}\|_0
                                                               \lesssim \|f_{dual}\|_0.
    \end{aligned}
    \end{equation}
    Using a bootstrapping argument in elliptic regularity (see, e.g., \cite[Section 6.3]{evans-2010-book}), we can deduce that $\chi\in H^4(\Omega)$ in a smooth domain $\Omega$.
    The regularity estimate \cref{eq:bdd-chi} follows from \cref{eq:hesheschi}.
    \end{proof}

We are ready to derive the $L^2$ a priori error estimates.
\begin{theorem}
    \label{thm:l2-u}
    ($L^2$ error estimate)
    Under the same conditions as \cref{thm:main1u} and assuming further that $\deg > 1$ (since the problem is fourth-order), the discrete solution $u_h$ approximates $u$ such that
    \begin{equation}
        \label{eq:l2-u}
        \|u-u_h\|_0 \lesssim 
        \begin{cases}
            h^{\min\{\deg+1,\Bbbk_u\}} &\text{for } \deg \ge 3,\\
            h^{2\min\{\deg-1,\Bbbk_u-2\}} & \text{for }\deg =2. 
        \end{cases} 
    \end{equation} 
\end{theorem}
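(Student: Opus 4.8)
The plan is to run an Aubin--Nitsche duality argument adapted to the $\mathcal{C}^0$-IP discretisation. Set $e \coloneqq u - u_h$; since $u = u_h = u_b$ on $\partial\Omega$ we have $e \in H^2(\mathcal{T}_h) \cap H^1_0(\Omega)$. Choosing $f_{dual} = e$ in the dual problem \cref{eq:strong-dual-u}, \cref{lem:frhs-bound} gives $\chi \in H^4(\Omega) \cap H^1_0(\Omega)$ with $\|\chi\|_4 \lesssim \|e\|_0$. I would then multiply the strong dual equation by $e$ and integrate by parts twice, elementwise, exactly as in the proofs of \cref{thm:consistency} and \cref{lem:H4-As}: because $\chi \in H^4(\Omega)$, both $\mathcal{D}^2\chi$ and $\nabla\cdot\mathcal{D}^2\chi$ are single-valued across interior facets, so all interface contributions collapse onto the terms $\llbrace \frac{\partial^2\chi}{\partial\nu^2}\rrbrace \llbracket \nabla e\rrbracket$; the jump $\llbracket\nabla\chi\rrbracket = 0$ annihilates the remaining symmetrisation term and the penalty term; and the homogeneous boundary data $\chi = 0$, $\mathcal{D}^2\chi = \mathbf{0}$ on $\partial\Omega$ kill the boundary contributions. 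The outcome is the key identity
\begin{equation*}
    \|e\|_0^2 = A^s_h(e,\chi) + P^s_h(e,\chi) + C^s(e,\chi) + 3B^s(u,u,e,\chi) = \langle \mathcal{D}\mathcal{N}^s_h(u)e,\chi\rangle.
\end{equation*}

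The next ingredient is Galerkin orthogonality. By \cref{thm:consistency} the exact solution $u$ satisfies the discrete equation \cref{eq:weak-u}, and $u_h$ satisfies it by construction, so $\mathcal{N}^s_h(u)t_h = \mathcal{N}^s_h(u_h)t_h$ for all $t_h \in W_{h,0}$. Using $u^3 - u_h^3 = 3u^2 e - 3u e^2 + e^3$ in the $B^s$ term, subtracting, and collecting the linearised part on the left yields
\begin{equation*}
    \langle \mathcal{D}\mathcal{N}^s_h(u)e,t_h\rangle = 3B^s(u,e,e,t_h) - B^s(e,e,e,t_h) \qquad \forall t_h \in W_{h,0}.
\end{equation*}
Inserting the interpolant $I_h\chi \in W_{h,0}$ then splits the error identity as
\begin{equation*}
    \|e\|_0^2 = \langle \mathcal{D}\mathcal{N}^s_h(u)e,\chi - I_h\chi\rangle + \langle \mathcal{D}\mathcal{N}^s_h(u)e, I_h\chi\rangle.
\end{equation*}

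For the second term I would apply the Galerkin orthogonality relation and then \cref{lem:bound-BandC}, together with $\vertiii{I_h\chi}_h \lesssim \|\chi\|_4 \lesssim \|e\|_0$ and the energy estimate $\vertiii{e}_h \lesssim h^{\min\{\deg-1,\Bbbk_u-2\}}$ from \cref{thm:main1u}; since $\vertiii{e}_h$ is small for $h$ small, the cubic piece is lower order and this term is $\lesssim \vertiii{e}_h^2\|e\|_0 \lesssim h^{2\min\{\deg-1,\Bbbk_u-2\}}\|e\|_0$. For the first term I would expand $\langle \mathcal{D}\mathcal{N}^s_h(u)e,\chi - I_h\chi\rangle$ into its four constituents: the $A^s_h + P^s_h$ part is bounded by $\vertiii{e}_h\vertiii{\chi - I_h\chi}_h$ using \cref{eq:est-average} and Cauchy--Schwarz, hence by $h^{\min\{\deg-1,\Bbbk_u-2\}}h^{\min\{\deg-1,2\}}\|\chi\|_4$ via \cref{thm:main1u} and the interpolation estimate in the $\vertiii{\cdot}_h$-norm; the $C^s$ and $3B^s(u,u,\cdot,\cdot)$ parts are bounded by $\|e\|_0\|\chi - I_h\chi\|_0 \lesssim h^{\min\{\deg+1,4\}}\|\chi\|_4\|e\|_0$, which — using $\|\chi\|_4 \lesssim \|e\|_0$ — is quadratic in $\|e\|_0$ with a positive power of $h$, so it can be absorbed into the left-hand side for $h$ small. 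Collecting everything, absorbing, and dividing by $\|e\|_0$ gives
\begin{equation*}
    \|e\|_0 \lesssim h^{\min\{\deg-1,\Bbbk_u-2\} + \min\{\deg-1,2\}} + h^{2\min\{\deg-1,\Bbbk_u-2\}},
\end{equation*}
and a short case analysis (recalling $\Bbbk_u \ge 4$ for a regular isolated solution) identifies the first exponent with $\min\{\deg+1,\Bbbk_u\}$ when $\deg \ge 3$, while for $\deg = 2$ both exponents equal $2\min\{\deg-1,\Bbbk_u-2\}$; this is exactly \cref{eq:l2-u}.

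The step I expect to be the main obstacle is the first one: justifying the elementwise integration by parts rigorously and checking that every interface and boundary term either cancels or reproduces precisely the discrete form $\langle \mathcal{D}\mathcal{N}^s_h(u)\cdot,\cdot\rangle$ when one argument is the broken function $e$ and the other is $\chi \in H^4(\Omega) \cap H^1_0(\Omega)$ — this relies on the single-valuedness of $\mathcal{D}^2\chi$ and $\nabla\cdot\mathcal{D}^2\chi$ on interior facets and on the homogeneous dual boundary conditions. The remaining work is essentially bookkeeping: tracking exponents through the nonlinear terms and confirming, via \cref{thm:main1u}, that the quadratic-in-error contributions (the price of linearising $u^3$ about $u$ rather than $u_h$) are of higher order, so that the duality argument genuinely upgrades the convergence rate from the mesh-dependent norm to $L^2$, doubling it in the lowest-order case $\deg = 2$.
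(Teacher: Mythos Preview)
Your proposal is correct and follows the same Aubin--Nitsche strategy as the paper, but with a cleaner organisation. The paper takes $f_{dual} = I_h u - u_h$ (rather than $u - u_h$), bounds $\|I_hu - u_h\|_0$ first, and appeals to the triangle inequality at the end; instead of your explicit Galerkin-orthogonality identity $\langle\mathcal{D}\mathcal{N}^s_h(u)e,t_h\rangle = 3B^s(u,e,e,t_h) - B^s(e,e,e,t_h)$, it adds the vanishing quantity $\mathcal{N}^s_h(u_h)(I_h\chi) - \mathcal{N}^s_h(u)(I_h\chi)$ and expands into four pieces $\mathfrak{U}_1,\ldots,\mathfrak{U}_4$. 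The extra splitting via $I_h u$ lets the paper invoke \cref{lem:H4-As} to obtain $h^{\min\{\deg+1,\Bbbk_u\}}$ directly for the $A^s_h(I_hu-u,\chi)$ contribution (via a second integration by parts against $\chi\in H^4$), whereas you reach the same exponent by combining $\vertiii{e}_h \lesssim h^{\min\{\deg-1,\Bbbk_u-2\}}$ with $\vertiii{\chi-I_h\chi}_h\lesssim h^{\min\{\deg-1,2\}}$. Your absorption of the $C^s$ and $3B^s(u,u,\cdot,\cdot)$ pieces into the left-hand side is a tidy alternative to bounding them outright.

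One caution: your bound $|A^s_h(e,\chi-I_h\chi)+P^s_h(e,\chi-I_h\chi)|\lesssim \vertiii{e}_h\vertiii{\chi-I_h\chi}_h$ invokes \cref{eq:est-average}, which as stated is for $W_{h,0}$ arguments (the discrete trace/inverse inequality is applied to $\llbrace\partial^2_\nu w\rrbrace$). Since neither $e$ nor $\chi-I_h\chi$ is piecewise polynomial, to make this step rigorous you should either pass through the standard augmented norm (adding $\sum_{e\in\mathcal{E}_I} h_e\|\llbrace\partial^2_\nu\cdot\rrbrace\|_{0,e}^2$, which interpolation estimates control at the same rate) or split $e=(u-I_hu)+(I_hu-u_h)$. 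The paper's $\mathfrak{U}_1$ term contains the same pairing $A^s_h(u-u_h,\chi-I_h\chi)$ and handles it at the same level of detail, so this is not a defect relative to the paper.
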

\begin{proof}
    Taking $f_{dual}= I_hu-u_h\in W_h\subset H^1(\Omega)\cap H^2(\mathcal{T}_h)$ in \cref{eq:strong-dual-u} and multiplying \cref{eq:strong-dual-u} by a test function $v_h=I_hu-u_h$ and integrating by parts, we obtain
    $
        \langle \mathcal{D}\mathcal{N}^s_h(u) \chi,I_hu-u_h\rangle = \|I_hu-u_h\|_0^2.
    $
    It follows from the fact that $u\in H^{\Bbbk_u}(\Omega)$, $\Bbbk_u\ge 4$, and the definition \cref{eq:weak-uorig} of the nonlinear continuous weak form $\mathcal{N}^s(u)\cdot$ that
        \begin{align*}
            \| I_hu-u_h\|_0^2 &= \langle \mathcal{D}\mathcal{N}^s_h(u) \chi,I_hu-u_h\rangle
            + \mathcal{N}^s_h(u_h)(I_h\chi) -\mathcal{N}^s_h(u) (I_h\chi)\\
                              &= A^s_h(\chi,I_hu-u_h) +P^s_h(\chi,I_hu-u_h) + C^s(\chi,I_hu-u_h) + 3B^s(u,u,\chi,I_hu-u_h) \\
                              &\quad + A^s_h(u_h,I_h\chi)+P^s_h(u_h,I_h\chi) + C^s(u_h,I_h\chi)+B^s(u_h,u_h,u_h,I_h\chi)\\
                              &\quad - A^s_h(u,I_h\chi) - P^s_h(u,I_h\chi) -C^s(u,I_h\chi) -B^s(u,u,u,I_h\chi) \\
                              &= \underbrace{A^s_h(I_hu-u,\chi)+A^s_h(u-u_h,\chi-I_h\chi) + P^s_h(I_hu-u,\chi) +P^s_h(u-u_h,\chi-I_h\chi)}_{\eqqcolon \mathfrak{U}_1}\\
                              &\quad + \underbrace{C^s(I_hu-u,\chi)+C^s(u-u_h,\chi-I_h\chi)}_{\eqqcolon \mathfrak{U}_2}\\
                              &\quad + \underbrace{3B^s(u,u,I_hu-u_h,\chi-I_h\chi) + 3B^s(u,u,I_hu-u,I_h\chi)}_{\eqqcolon \mathfrak{U}_3}\\
                              &\quad+ \underbrace{B^s(u_h,u_h,u_h,I_h\chi)-3B^s(u,u,u_h,I_h\chi) + 2B^s(u,u,u,I_h\chi)}_{\eqqcolon \mathfrak{U}_4}.
        \end{align*}
        We bound each $\mathfrak{U}_i$ separately using the boundedness of $A^s_h,P^s_h,B^s$ and $C^s$, \cref{thm:main1u} and standard interpolation estimates.
    This leads to
        \begin{align*}
            &\mathfrak{U}_1 \lesssim h^{\min\{\deg+1,\Bbbk_u\}}\|\chi\|_4 +
            {\vertiii{u-u_h}_h}
            {\vertiii{\chi-I_h\chi}_h}
                           \lesssim h^{\min\{\deg+1,\Bbbk_u\}}\|\chi\|_4,\\
            &\mathfrak{U}_2 \lesssim
            {\|I_hu-u\|_0}
            {\|\chi\|_0}
            + \vertiii{u-u_h}_h\vertiii{\chi-I_h\chi}_h
                           \lesssim h^{\min\{\deg+1,\Bbbk_u\}}\|\chi\|_4,\\
            &\mathfrak{U}_3
                           \lesssim \|u\|_2^2
                           {\vertiii{I_hu-u_h}_h}
                           {\vertiii{\chi-I_h\chi}_h}
                           + \|u\|_2^2
                           {\|I_hu-u\|_0}
                           {\|I_h \chi\|_0}
                           \lesssim h^{\min\{\deg+1,\Bbbk_u\}}\|\chi\|_4.
        \end{align*}
        Setting $e_3=u_h-u$ and estimating $\mathfrak{U}_4$ as in $\mathfrak{R}_4$ of \cref{lem:mappingball} with the use of \cref{thm:main1u} and $\vertiii{I_h\chi}_h\lesssim \|\chi\|_2\le\|\chi\|_4$ yields
        \begin{equation*}
            \mathfrak{U}_4 \lesssim \vertiii{e_3}_h^2 \left(\vertiii{e_3}_h+\vertiii{u}_h\right) {\vertiii{I_h\chi}_h}
                           \lesssim h^{2\min\{\deg-1,\Bbbk_u-2\}} (h^{\min\{\deg-1,\Bbbk_u-2\}}+1) \|\chi\|_4.
    \end{equation*}
    Combining the above estimates for $\mathfrak{U}_i$ ($i=1,2,3,4$) and using the regularity estimate \cref{eq:bdd-chi} and $\|\chi\|_4\lesssim \|I_hu-u_h\|_0$, we obtain
    \begin{equation*}
        \|I_hu-u_h\|_0 \lesssim 
        \begin{cases}
            h^{\min\{\deg+1,\Bbbk_u\}}  & \text{if }\deg\ge 3,\\
            h^{2\min\{\deg-1,\Bbbk_u-2\}} &\text{if }\deg=2.
        \end{cases}
    \end{equation*}
    Hence, \cref{eq:l2-u} follows from the triangle inequality and standard interpolation estimates.
\end{proof}

\cref{thm:l2-u} implies that for quadratic approximations to the sufficiently regular solution of \cref{eq:strong-u}, there is a sub-optimal convergence rate in the $L^2$-norm while for higher order ($\ge 3$) approximations, we expect optimal $L^2$ error rates.
We shall see numerical verifications of this in the subsequent sections.

\subsubsection{The inconsistent discrete form}

The above analysis considers the consistent weak formulation \cref{eq:weak-u}.
In practice, Xia et al.\ \cite{xia-2021-article} adopted the inconsistent discrete weak form in the implementation due to its cheaper assembly cost: find $u_h\in W_{h,b}$ such that
\begin{equation}
\label{eq:weak-u-actual}
\tilde{\mathcal{N}}_h^s(u_h)t_h = \tilde{A}^s_h(u_h,t_h) + B^s(u_h,u_h,u_h,t_h) + C^s(u_h,t_h) + P^s_h(u_h,t_h)
    = 0 \quad \forall t_h\in W_{h,0},
\end{equation}
where
$
\tilde{A}^s_h(u,t) \coloneqq 2B\sum_{T\in\mathcal{T}_h} \int_T \mathcal{D}^2 u \colon \mathcal{D}^2 t.
$
Comparing $\tilde{A}^s_h$ and $A^s_h$, the missing terms are the interior facet integrals arising from piecewise integration by parts and symmetrisation.
Due to the absence of these terms in $\tilde{A}^s_h$, one can immediately notice that the discrete weak formulation \cref{eq:weak-u-actual} is inconsistent in the sense that the solution $u$ of the strong form \cref{eq:strong-u} does not satisfy the weak form \cref{eq:weak-u-actual}, as opposed to the result of \cref{thm:consistency}.

Despite this inconsistency, in practice this also leads to a convergent numerical scheme with similar convergence rates, as illustrated in \cref{sec:num}.
This is not surprising; a similar idea has also been applied and introduced as \emph{weakly over-penalised symmetric interior penalty} (WOPSIP) methods in \cite{brenner-2008-article} for second-order elliptic PDEs and in \cite{brenner-2010-article} for biharmonic equations.

\begin{remark}
    The excessive size of the penalty parameter in the WOPSIP method could induce ill-conditioned linear systems. No such effects are observed in our numerical results.
\end{remark}

\subsection{A priori error estimates for $(\mathcal{P}2)$}
\label{sec:apriori-Q}

Problem $(\mathcal{P}2)$ is a special form of the classical LdG model of nematic LC.
Finite element analysis for a more general form using conforming discretisations has been studied in \cite{davis-phd-thesis, davis-1998-article}. 
More specifically, Davis and Gartland \cite{davis-1998-article} gave an abstract nonlinear finite element convergence analysis where an optimal $H^1$ error bound is proved on convex domains with piecewise linear polynomial approximations, but do not derive an error bound in the $L^2$ norm.
Recently, Maity, Majumdar and Nataraj \cite{maity-2020a-article} analysed the discontinuous Galerkin finite element method for a two-dimensional reduced LdG free energy, where optimal a priori error estimates in the $L^2$-norm with exact solutions in $H^2$ are achieved for a piecewise linear discretisation.
Both works only focus on piecewise linear approximations.
In this section, we will follow similar steps to \cref{sec:apriori-u} to prove the $H^1$- and $L^2$-convergence rates for the problem $(\mathcal{P}2)$ with the use of common continuous Lagrange elements of arbitrary positive degree.
Since the approach is similar to the previous subsections, we omit some details for brevity.


The continuous weak formulation of $(\mathcal{P}2)$ in two dimensions (the three-dimensional case can be tackled similarly) is given by:
find $\Qvec\in H^1_b(\Omega,S_0)$ such that
\begin{equation}
    \label{eq:weak-Q}
    \mathcal{N}^n(\Qvec)\Pvec \coloneqq A^n(\Qvec,\Pvec) + B^n(\Qvec,\Qvec,\Qvec,\Pvec)+C^n(\Qvec,\Pvec)=0 \quad \forall \Pvec\in \mathbf{H}^1_0(\Omega),
\end{equation}
where the bilinear forms are
$
        A^n(\Qvec,\Pvec) \coloneqq K \int_\Omega \nabla \Qvec\fcolon \nabla \Pvec,\
        C^n(\Qvec,\Pvec) \coloneqq -2l \int_\Omega \Qvec: \Pvec,
$
and the nonlinear operator is given by
\begin{equation}
    \label{eq:def-Bn}
    B^n(\Psi, \Phi,\Theta, \Xi) \coloneqq \frac{4l}{3} \int_\Omega \left( (\Psi : \Phi)(\Theta : \Xi)+2(\Psi : \Theta)(\Phi : \Xi) \right).
\end{equation}

Since \cref{eq:weak-Q} is nonlinear, we need to approximate the solution of its linearised version, i.e., find $\Theta\in \mathbf{H}^1_0(\Omega)$ such that
\begin{equation}
    \label{eq:linear-Q}
    \langle \mathcal{D}\mathcal{N}^n(\Qvec)\Theta,\Phi\rangle \coloneqq A^n(\Theta, \Phi) + 3B^n(\Qvec,\Qvec,\Theta,\Phi) + C^n(\Theta,\Phi) = - \mathcal{N}^n(\Qvec)\Phi \quad \forall \Phi\in \mathbf{H}^1_0(\Omega),
\end{equation}
where $\langle \cdot,\cdot\rangle$ represents the dual pairing between $\mathbf{H}^{-1}(\Omega)$ and $\mathbf{H}^1_0(\Omega)$.

Suppose $\Qvec_h\in \mathbf{V}_h$ approximates the solution of \cref{eq:weak-Q} with the conforming finite element method on a finite dimensional space
$\mathbf{V}_h\coloneqq \{\Pvec\in \mathbf{H}^1(\Omega): \Pvec\in \mathbb{Q}_{\deg}(T),\deg \ge 1, \forall T\in \mathcal{T}_h\}$.
Throughout this subsection we take $\deg\ge 1$.
Furthermore, we denote $\mathbf{V}_{h,0}\coloneqq \{\Pvec\in \mathbf{V}_h: \Pvec=\mathbf{0} \text{ on }\partial\Omega\}$ and $\mathbf{V}_{h,b}\coloneqq \{\Pvec\in \mathbf{V}_h: \Pvec=\Qvec_b \text{ on }\partial\Omega\}$.
We assume that the minimiser $\Qvec$ to be approximated is isolated, i.e., the linearised operator $\langle \mathcal{D}\mathcal{N}^n(\Qvec)\cdot,\cdot\rangle$ is nonsingular.
This
is equivalent to the following continuous inf-sup condition \cite[Eq. (2.8)]{maity-2020a-article}:
\begin{equation}
    \label{eq:inf-sup-DN}
    0<\beta_Q \coloneqq \adjustlimits\inf_{\underset{\|\Theta\|_1=1}{\Theta\in \mathbf{H}^1_0(\Omega)}}\sup_{\underset{\|\Phi\|_1=1}{\Phi\in \mathbf{H}^1_0(\Omega)}} \langle \mathcal{D}\mathcal{N}^n(\Qvec)\Theta,\Phi\rangle
    =  \adjustlimits\inf_{\underset{\|\Phi\|_1=1}{\Phi\in \mathbf{H}^1_0(\Omega)}} \sup_{\underset{\|\Theta\|_1=1}{\Theta\in \mathbf{H}^1_0(\Omega)}} \langle \mathcal{D}\mathcal{N}^n(\Qvec)\Theta,\Phi\rangle.
\end{equation}

With this inf-sup condition for $\langle\mathcal{D}\mathcal{N}^n(\Qvec)\cdot,\cdot\rangle$, we can obtain a stability result for the perturbed bilinear form $\langle\mathcal{D}\mathcal{N}^n(I_h \Qvec)\cdot,\cdot\rangle$ by following similar steps as in the proof of \cref{thm:weak-coer-per}. 
\begin{theorem}
    \label{thm:perturbed-Nn}
    (Stability of the perturbed bilinear form)
    Let $\Qvec$ be a regular isolated solution of the nonlinear continuous weak form \cref{eq:weak-Q} and $I_h \Qvec$ the interpolant of $\Qvec$.
    For a sufficiently small mesh size $h$, the following discrete inf-sup condition holds:
    \begin{equation}
        \label{eq:stability-perturb}
        0<\frac{\beta_Q}{2} 
        \leq \adjustlimits\inf_{\underset{\|\Theta\|_1=1}{\Theta\in \mathbf{H}^1_0(\Omega)}}\sup_{\underset{\|\Phi\|_1=1}{\Phi\in \mathbf{H}^1_0(\Omega)}}
        \langle \mathcal{D}\mathcal{N}^n(I_h \Qvec)\Theta,\Phi\rangle.
    \end{equation}
\end{theorem}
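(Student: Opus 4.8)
The plan is to follow the perturbation argument used for \cref{thm:weak-coer-per}, which is in fact lighter here: because the discretisation of $\Qvec$ is conforming, $\mathbf{V}_h\subset\mathbf{H}^1(\Omega)$, the claim \cref{eq:stability-perturb} is an inf-sup condition posed over the \emph{continuous} space $\mathbf{H}^1_0(\Omega)$ in which only the coefficient of the linearised operator has been perturbed, from $\Qvec$ to $I_h\Qvec$. Consequently no enrichment operator or mesh-dependent norm is needed, and the perturbed operator differs from $\langle\mathcal{D}\mathcal{N}^n(\Qvec)\cdot,\cdot\rangle$ only through the cubic term $B^n$, since $A^n$ and $C^n$ are independent of the base point.

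First I would write $\tilde{\Qvec}\coloneqq\Qvec-I_h\Qvec$ and expand, using the multilinearity of $B^n$ in its first two slots,
\begin{equation*}
    \langle\mathcal{D}\mathcal{N}^n(I_h\Qvec)\Theta,\Phi\rangle
    = \langle\mathcal{D}\mathcal{N}^n(\Qvec)\Theta,\Phi\rangle
    + 3\big(B^n(\tilde{\Qvec},\tilde{\Qvec},\Theta,\Phi) - B^n(\Qvec,\tilde{\Qvec},\Theta,\Phi) - B^n(\tilde{\Qvec},\Qvec,\Theta,\Phi)\big).
\end{equation*}
Unlike the fully symmetric form $B^s$ of \cref{sec:apriori-u}, $B^n$ is not symmetric in its first two arguments, so the two cross terms are kept separate; this is immaterial since only the boundedness of $B^n$ will be used. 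That boundedness, $|B^n(\Psi,\Phi,\Theta,\Xi)|\lesssim\|\Psi\|_1\|\Phi\|_1\|\Theta\|_1\|\Xi\|_1$, follows from H\"older's inequality and the Sobolev embedding $\mathbf{H}^1(\Omega)\hookrightarrow\mathbf{L}^4(\Omega)$ (valid for $d\in\{2,3\}$), exactly as in \cref{lem:bound-BandC}. Hence, up to a constant, the three error terms are bounded by $\big(\|\tilde{\Qvec}\|_1^2 + \|\Qvec\|_1\|\tilde{\Qvec}\|_1\big)\|\Theta\|_1\|\Phi\|_1$, and since $\Qvec$ is a regular solution, say $\Qvec\in\mathbf{H}^{\Bbbk_Q}(\Omega)$ with $\Bbbk_Q\ge 2$, the standard interpolation estimate gives $\|\tilde{\Qvec}\|_1\lesssim h^{\min\{\deg,\Bbbk_Q-1\}}\|\Qvec\|_{\Bbbk_Q}$, a strictly positive power of $h$.

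To conclude, fix $\Theta\in\mathbf{H}^1_0(\Omega)$ with $\|\Theta\|_1=1$, take the supremum over $\Phi\in\mathbf{H}^1_0(\Omega)$ with $\|\Phi\|_1=1$, and use the elementary bound $\sup_\Phi\big(a(\Phi)-b(\Phi)\big)\ge\sup_\Phi a(\Phi)-\sup_\Phi|b(\Phi)|$ together with the continuous inf-sup condition \cref{eq:inf-sup-DN} and the estimates above to obtain
\begin{equation*}
    \sup_{\|\Phi\|_1=1}\langle\mathcal{D}\mathcal{N}^n(I_h\Qvec)\Theta,\Phi\rangle
    \ge \beta_Q - C\,h^{\min\{\deg,\Bbbk_Q-1\}}\big(h^{\min\{\deg,\Bbbk_Q-1\}}+\|\Qvec\|_{\Bbbk_Q}\big)
    \ge \frac{\beta_Q}{2},
\end{equation*}
provided $h$ is small enough that the subtracted term does not exceed $\beta_Q/2$. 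Taking the infimum over such $\Theta$ yields \cref{eq:stability-perturb}; the reverse ordering of the inf and the sup follows by the identical computation (or, since $\langle\mathcal{D}\mathcal{N}^n(I_h\Qvec)\cdot,\cdot\rangle$ is a bounded bilinear form on a Hilbert space, from the equivalence of the two inf-sup orderings).

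I do not expect a genuine obstacle: the statement is a first-order perturbation of a nonsingular linear operator, and is considerably simpler than \cref{thm:weak-coer-per} precisely because the $\Qvec$-discretisation is conforming. The only points deserving care are (i) expanding $B^n$ correctly given its lack of symmetry in the first two arguments, so that one tracks three error contributions rather than two, and (ii) making explicit the assumed regularity of $\Qvec$, so that $\|\Qvec-I_h\Qvec\|_1$ carries a positive power of $h$ — this is what forces the perturbation to vanish as $h\to 0$ and fixes the threshold on the mesh size.
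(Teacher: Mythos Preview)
Your proposal is correct and follows essentially the same perturbation argument the paper has in mind when it says to mimic the proof of \cref{thm:weak-coer-per}: expand $B^n(I_h\Qvec,I_h\Qvec,\cdot,\cdot)$ about $\Qvec$, bound the difference via \cref{lem:bounded-Bn} and interpolation, and absorb the $O(h^{\min\{\deg,\Bbbk_Q-1\}})$ loss into $\beta_Q/2$. You are also right that this case is strictly simpler than \cref{thm:weak-coer-per}, since the inf-sup in \cref{eq:stability-perturb} is posed over $\mathbf{H}^1_0(\Omega)$ and one can appeal directly to \cref{eq:inf-sup-DN} without any analogue of \cref{thm:inf-sup-u}; your care in keeping the two cross terms $B^n(\Qvec,\tilde{\Qvec},\cdot,\cdot)$ and $B^n(\tilde{\Qvec},\Qvec,\cdot,\cdot)$ separate, owing to the asymmetry of $B^n$ in its first two slots, is the only genuine wrinkle beyond the $u$-case.
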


We give some auxiliary results about the operators $A^n(\cdot,\cdot)$, $B^n(\cdot,\cdot,\cdot,\cdot)$ and $C^n(\cdot,\cdot)$ that can be verified via the Cauchy--Schwarz inequality, the Poincar\'e inequality, and Sobolev embeddings.

\begin{lemma}\label{lem:bounded-An}
    (Boundedness and coercivity of $A^n(\cdot,\cdot)$)
    For $\Theta,\Phi\in \mathbf{H}^1_0(\Omega)$, there holds
    \begin{equation*}
        A^n(\Theta,\Phi) \lesssim \|\Theta\|_1\|\Phi\|_1 \text{ and }
        \|\Theta\|_1^2 \lesssim A^n(\Theta,\Theta).
    \end{equation*}
\end{lemma}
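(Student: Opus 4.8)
The plan is to read off both estimates directly from the definition $A^n(\Theta,\Phi) = K\int_\Omega \nabla\Theta \fcolon \nabla\Phi$, interpreting the Frobenius contraction componentwise, $\nabla\Theta \fcolon \nabla\Phi = \sum_{i,j,k}\partial_k\Theta_{ij}\,\partial_k\Phi_{ij}$, so that everything reduces to the familiar scalar $H^1$ setting.

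For boundedness, I would apply the Cauchy--Schwarz inequality in $L^2(\Omega)$ to obtain
\begin{equation*}
    A^n(\Theta,\Phi) \le K\,\|\nabla\Theta\|_0\,\|\nabla\Phi\|_0 = K\,|\Theta|_1\,|\Phi|_1 \le K\,\|\Theta\|_1\,\|\Phi\|_1,
\end{equation*}
which gives the first claim, with implied constant $K$ independent of the mesh.

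For coercivity, note that $A^n(\Theta,\Theta) = K\,\|\nabla\Theta\|_0^2 = K\,|\Theta|_1^2$. Since $\Theta\in\mathbf{H}^1_0(\Omega)$ and $\Omega$ is bounded, the Poincar\'e inequality $\|\Theta\|_0 \lesssim |\Theta|_1$ holds (applied componentwise), whence $\|\Theta\|_1^2 = \|\Theta\|_0^2 + |\Theta|_1^2 \lesssim |\Theta|_1^2 = K^{-1} A^n(\Theta,\Theta)$, that is $\|\Theta\|_1^2 \lesssim A^n(\Theta,\Theta)$.

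The only point requiring care — not really an obstacle — is that coercivity genuinely uses the homogeneous Dirichlet trace on $\partial\Omega$ to license the Poincar\'e inequality; on all of $\mathbf{H}^1(\Omega)$ the quantity $|\cdot|_1$ is merely a seminorm. Since $\mathbf{H}^1_0(\Omega)$ is built into the hypothesis, no Sobolev embedding is needed for this particular lemma (those enter only in the estimates for $B^n$ and $C^n$), and both constants are independent of any discretisation, consistent with the $\lesssim$ convention of the paper.
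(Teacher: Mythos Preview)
Your proof is correct and follows exactly the route the paper indicates: it states (without writing out a proof) that the result ``can be verified via the Cauchy--Schwarz inequality, the Poincar\'e inequality, and Sobolev embeddings,'' and your argument supplies precisely the Cauchy--Schwarz step for boundedness and the Poincar\'e step for coercivity. Your observation that the Sobolev embedding is needed only for $B^n$ and $C^n$, not for $A^n$, is also accurate.
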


\begin{lemma}
    \label{lem:bounded-Bn}
    (Boundedness of $B^n(\cdot,\cdot,\cdot,\cdot)$, $C^n(\cdot,\cdot)$)
    For $\Psi,\Phi,\Theta,\Xi\in \mathbf{H}^1(\Omega)$, there holds
    \begin{equation}
        \label{eq:bound1-Bn}
        B^n(\Psi,\Phi,\Theta,\Xi) \lesssim \|\Psi\|_1 \|\Phi\|_1 \|\Theta\|_1 \|\Xi\|_1,\quad C^n(\Psi,\Phi) \lesssim \|\Psi\|_1 \|\Phi\|_1,
    \end{equation}
    and for $\Psi,\Phi\in \mathbf{H}^{\Bbbk}(\Omega)$, $\Bbbk\ge 2$, $\Theta,\Xi\in \mathbf{H}^1(\Omega)$,
    \begin{equation}
        \label{eq:bound2-Bn}
        B^n(\Psi,\Phi,\Theta,\Xi) \lesssim \|\Psi\|_\Bbbk \|\Phi\|_\Bbbk \|\Theta\|_1 \|\Xi\|_1.
    \end{equation}
\end{lemma}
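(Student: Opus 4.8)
The plan is to derive all three bounds from the elementary matrix inequality $|\mathbf{A}:\mathbf{B}|\le|\mathbf{A}|\,|\mathbf{B}|$ (Cauchy--Schwarz for the Frobenius inner product), combined with a fourfold H\"older inequality and the Sobolev embedding $H^1(\Omega)\hookrightarrow L^4(\Omega)$, which holds on the bounded Lipschitz domain $\Omega$ for $d\in\{2,3\}$. For \eqref{eq:bound1-Bn}, expanding the definition \eqref{eq:def-Bn} and applying the matrix Cauchy--Schwarz inequality termwise gives
\[
    |B^n(\Psi,\Phi,\Theta,\Xi)| \le \frac{4l}{3}\int_\Omega\bigl(|\Psi||\Phi||\Theta||\Xi| + 2|\Psi||\Theta||\Phi||\Xi|\bigr) = 4l\int_\Omega |\Psi||\Phi||\Theta||\Xi|,
\]
and H\"older's inequality with all four exponents equal to $4$ bounds the right-hand side by $4l\,\|\Psi\|_{L^4}\|\Phi\|_{L^4}\|\Theta\|_{L^4}\|\Xi\|_{L^4}$; the embedding $H^1\hookrightarrow L^4$ then yields the asserted estimate. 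For $C^n$, the matrix Cauchy--Schwarz inequality followed by the Cauchy--Schwarz inequality in $L^2(\Omega)$ gives $|C^n(\Psi,\Phi)| \le 2l\,\|\Psi\|_0\|\Phi\|_0\le 2l\,\|\Psi\|_1\|\Phi\|_1$.

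For \eqref{eq:bound2-Bn}, the only modification is to assign the two higher-regularity arguments $\Psi,\Phi$ to $L^\infty(\Omega)$ and the remaining two to $L^2(\Omega)$ in the H\"older step: since $\Bbbk\ge 2 > d/2$ for $d\in\{2,3\}$, the embedding $H^\Bbbk(\Omega)\hookrightarrow L^\infty(\Omega)$ applies, so
\[
    |B^n(\Psi,\Phi,\Theta,\Xi)| \le 4l\,\|\Psi\|_{L^\infty}\|\Phi\|_{L^\infty}\|\Theta\|_{0}\|\Xi\|_{0} \lesssim \|\Psi\|_\Bbbk\|\Phi\|_\Bbbk\|\Theta\|_1\|\Xi\|_1 .
\]

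There is no substantive obstacle here: the argument is a routine chain of H\"older and Sobolev estimates, and the only point that warrants a moment's attention is checking admissibility of the Sobolev exponents --- $H^1\hookrightarrow L^4$ requires $4\le 2d/(d-2)=6$ when $d=3$ and is immediate when $d=2$, while $H^\Bbbk\hookrightarrow L^\infty$ requires $\Bbbk>d/2$, which is valid for $\Bbbk\ge 2$ in both dimensions. On the subspace $\mathbf{H}^1_0(\Omega)$ one may, if desired, replace the full norms $\|\cdot\|_1$ by the seminorm $|\cdot|_1$ via the Poincar\'e inequality, exactly as in the proof of \cref{lem:bound-BandC}.
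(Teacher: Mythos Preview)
Your argument is correct and matches the paper's approach: the paper states (without detail) that these bounds follow from the Cauchy--Schwarz inequality, the Poincar\'e inequality, and Sobolev embeddings, and the analogous result for $B^s$ in \cref{lem:bound-BandC} is proved exactly as you do---H\"older with four $L^4$ factors and the embedding $H^1\hookrightarrow L^4$ for \eqref{eq:bound1-Bn}, and $H^2\hookrightarrow L^\infty$ plus Cauchy--Schwarz for \eqref{eq:bound2-Bn}.
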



To proceed to error estimates for the nonlinear problem \cref{eq:weak-Q}, we define the nonlinear map $\psi: \mathbf{V}_h\to \mathbf{V}_h$ by
\begin{equation*}
    \langle \mathcal{D}\mathcal{N}^n(I_h \Qvec) \psi(\Theta_h), \Phi_h\rangle = 3B^n(I_h \Qvec, I_h \Qvec, \Theta_h, \Phi_h) - B^n(\Theta_h,\Theta_h,\Theta_h,\Phi_h)
\end{equation*}
for $\Theta_h,\Phi_h\in \mathbf{V}_{h,0}$.
Due to the stability result of \cref{thm:perturbed-Nn}, the nonlinear map $\psi$ is well-defined.
We define the local ball $\mathcal{B}_\rho(I_h\Qvec)\coloneqq \{\Pvec_h\in \mathbf{V}_h:\|I_h\Qvec-\Pvec_h\|_1\le \rho\}$.
The following two auxiliary lemmas provide the necessary components for the application of Brouwer's fixed point theorem.

\begin{lemma}
    \label{lem:mappingball-Q}
    (Mapping from a ball to itself)
    Let $\Qvec$ be a regular isolated solution of the continuous nonlinear weak problem \cref{eq:weak-Q}.
    For a sufficiently small mesh size $h$, there exists a positive constant $r(h)>0$ such that:
    \begin{equation*}
        \|\Pvec_h - I_h \Qvec\|_1 \le r(h) \Rightarrow \|\psi(\Pvec_h)- I_h \Qvec\|_1 \le r(h)\quad \forall \Pvec_h\in \mathbf{V}_{h,0}.
    \end{equation*}
\end{lemma}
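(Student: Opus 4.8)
The plan is to mirror the proof of \cref{lem:mappingball}, which is considerably simpler in the present setting because the discretisation $\mathbf{V}_h\subset\mathbf{H}^1(\Omega)$ is conforming: no enrichment operator and no mesh-dependent norm are required, and the exact solution $\Qvec$ of \cref{eq:weak-Q} satisfies the discrete weak form verbatim, i.e.\ $A^n(\Qvec,\Phi_h)+B^n(\Qvec,\Qvec,\Qvec,\Phi_h)+C^n(\Qvec,\Phi_h)=0$ for all $\Phi_h\in\mathbf{V}_{h,0}$. Using this identity, the definition of $\psi$, and the linearity of $\langle\mathcal{D}\mathcal{N}^n(I_h\Qvec)\cdot,\cdot\rangle$, I would expand, for $\Phi_h\in\mathbf{V}_{h,0}$,
\[
\langle\mathcal{D}\mathcal{N}^n(I_h\Qvec)(I_h\Qvec-\psi(\Pvec_h)),\Phi_h\rangle=\mathfrak{M}_1+\mathfrak{M}_2+\mathfrak{M}_3,
\]
where $\mathfrak{M}_1\coloneqq A^n(I_h\Qvec-\Qvec,\Phi_h)+C^n(I_h\Qvec-\Qvec,\Phi_h)$ is the linear interpolation-error part, $\mathfrak{M}_2\coloneqq B^n(I_h\Qvec,I_h\Qvec,I_h\Qvec,\Phi_h)-B^n(\Qvec,\Qvec,\Qvec,\Phi_h)$ the cubic interpolation-error part, and $\mathfrak{M}_3\coloneqq 2B^n(I_h\Qvec,I_h\Qvec,I_h\Qvec,\Phi_h)-3B^n(I_h\Qvec,I_h\Qvec,\Pvec_h,\Phi_h)+B^n(\Pvec_h,\Pvec_h,\Pvec_h,\Phi_h)$ the genuinely nonlinear remainder, the analogue of $\mathfrak{N}_4$ in \cref{lem:mappingball}.

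Each piece is then bounded as there. Writing $\Qvec\in\mathbf{H}^{\Bbbk}(\Omega)$, $\Bbbk\ge 2$, for the assumed regularity: \cref{lem:bounded-An,lem:bounded-Bn} with the standard interpolation estimate give $\mathfrak{M}_1\lesssim\|I_h\Qvec-\Qvec\|_1\|\Phi_h\|_1\lesssim h^{\min\{\deg,\Bbbk-1\}}\|\Phi_h\|_1$. For $\mathfrak{M}_2$ I would use the multilinearity of $B^n$ to split it into terms each carrying at least one factor of the interpolation error $I_h\Qvec-\Qvec$, bound the piece linear in that factor by the higher-regularity estimate \cref{eq:bound2-Bn} (keeping the smooth factor $\Qvec$ in the $\mathbf{H}^{\Bbbk}$ slots) and the quadratic and cubic pieces by \cref{eq:bound1-Bn}, obtaining $\mathfrak{M}_2\lesssim\big(h^{\min\{\deg,\Bbbk-1\}}+h^{2\min\{\deg,\Bbbk-1\}}+h^{3\min\{\deg,\Bbbk-1\}}\big)\|\Phi_h\|_1\lesssim h^{\min\{\deg,\Bbbk-1\}}\|\Phi_h\|_1$. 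For $\mathfrak{M}_3$, setting $e_I\coloneqq\Pvec_h-I_h\Qvec$ and carrying out the same \emph{completing-the-cube} rearrangement as for $\mathfrak{N}_4$ (which still goes through because $B^n(\cdot,\cdot,\cdot,\Phi_h)$ collapses to the symmetric cubic $4l\int|\cdot|^2(\cdot:\Phi_h)$ on the diagonal) one gets $\mathfrak{M}_3=B^n(e_I,e_I,e_I,\Phi_h)+3B^n(e_I,e_I,I_h\Qvec,\Phi_h)$, hence $\mathfrak{M}_3\lesssim\|e_I\|_1^2\big(\|e_I\|_1+\|I_h\Qvec\|_1\big)\|\Phi_h\|_1\lesssim r(h)^2\big(1+r(h)\big)\|\Phi_h\|_1$ by \cref{eq:bound1-Bn}, using $\|e_I\|_1\le r(h)$ and $\|I_h\Qvec\|_1\lesssim\|\Qvec\|_1$. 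Combining the three bounds and invoking the discrete inf-sup condition \cref{eq:stability-perturb} of \cref{thm:perturbed-Nn} to pick $\Phi_h$ with $\|\Phi_h\|_1=1$ realising the supremum yields $\|I_h\Qvec-\psi(\Pvec_h)\|_1\le C_Q\big(h^{\min\{\deg,\Bbbk-1\}}+r(h)^2(1+r(h))\big)$, with $C_Q$ depending only on $\beta_Q$ and $\|\Qvec\|_{\Bbbk}$.

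To close the argument I would set $r(h)\coloneqq 4C_Q\,h^{\min\{\deg,\Bbbk-1\}}$ and restrict $h$ so small that $C_Q\,r(h)^2\big(1+r(h)\big)\le\tfrac12 r(h)$ — an explicit smallness condition on $h^{\min\{\deg,\Bbbk-1\}}$ in terms of $C_Q$ — so that $\|I_h\Qvec-\psi(\Pvec_h)\|_1\le C_Q\,h^{\min\{\deg,\Bbbk-1\}}+\tfrac12 r(h)\le r(h)$, which is the assertion. The only delicate point I anticipate is the bookkeeping in $\mathfrak{M}_2$ and $\mathfrak{M}_3$: one must keep the $\mathbf{H}^{\Bbbk}$-regular factor $\Qvec$ in the slots required by \cref{eq:bound2-Bn} and track the number of interpolation-error (resp.\ $e_I$) factors so that the powers of $h$ (resp.\ of $r(h)$) come out exactly as stated, and since $B^n$ is not fully symmetric the algebraic identity for $\mathfrak{M}_3$ should be verified directly rather than simply quoted from \cref{lem:mappingball}. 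The remaining steps — the inf-sup argument and the choice of $r(h)$ — are routine.
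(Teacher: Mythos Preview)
Your proposal is correct and follows essentially the same approach as the paper, which explicitly states that \cref{lem:mappingball-Q} is proved by following similar steps to \cref{lem:mappingball} and confirms in the subsequent remark that $r(h)=\mathcal{O}(h^{\min\{\deg,\Bbbk_Q-1\}})$. Your decomposition $\mathfrak{M}_1+\mathfrak{M}_2+\mathfrak{M}_3$ corresponds precisely to the paper's $\mathfrak{N}_1+\mathfrak{N}_2$, $\mathfrak{N}_3$, $\mathfrak{N}_4$, and your anticipation that the identity $\mathfrak{M}_3=B^n(e_I,e_I,e_I,\Phi_h)+3B^n(e_I,e_I,I_h\Qvec,\Phi_h)$ must be checked by hand (since $B^n$ in \cref{eq:def-Bn} is not fully symmetric) is well placed---a direct computation with $\Pvec_h=I_h\Qvec+e_I$ confirms it.
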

\begin{remark}
    In fact, the choice of $r(h)$ can be taken as $r(h)=\mathcal{O}(h^{\min\{\deg, \Bbbk_Q-1\}})$ in the proof of \cref{lem:mappingball-Q}.
    Here, $\Bbbk_Q\ge 2$ denotes the regularity index of $\Qvec$, i.e., $\Qvec\in \mathbf{H}^{\Bbbk_Q}(\Omega)$.
\end{remark}

\begin{lemma}
    \label{lem:contraction-Q}
    (Contraction result)
    For a sufficiently small mesh size $h$ and any $\Pvec_1,\Pvec_2\in \mathcal{B}_{r(h)}(I_h \Qvec)$, there holds
    \begin{equation}
    \|\psi(\Pvec_1) - \psi(\Pvec_2)\|_1 \lesssim h^{\min\{\deg,\Bbbk_Q-1\}} \|\Pvec_1-\Pvec_2\|_1.
    \end{equation}
\end{lemma}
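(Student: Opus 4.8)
The plan is to follow the blueprint of \cref{lem:contraction}, which becomes somewhat simpler here because the $\Qvec$-discretisation is conforming, so no enrichment operator is needed and the plain norm $\|\cdot\|_1$ replaces $\vertiii{\cdot}_h$ throughout, and the bulk nonlinearity is controlled directly by \cref{lem:bounded-Bn}. The one genuinely new ingredient is that $B^n$ is \emph{not} symmetric in its first two slots (unlike the fully symmetric $B^s$), so the cancellation of the leading term needs the symmetrisation identity
\[
B^n(\Theta,\Psi,\Psi,\Phi)+B^n(\Psi,\Theta,\Psi,\Phi)+B^n(\Psi,\Psi,\Theta,\Phi)=3B^n(\Psi,\Psi,\Theta,\Phi),
\]
valid for all arguments, which I would verify directly from \cref{eq:def-Bn}; it expresses the fact that $3B^n(\cdot,\cdot,\Theta,\Phi)$ is the (symmetric) Hessian of the quartic bulk term.

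First I would use the definition of the map $\psi$ and the linearity of $\langle\mathcal{D}\mathcal{N}^n(I_h\Qvec)\cdot,\cdot\rangle$ to write, for $\Phi_h\in\mathbf{V}_{h,0}$,
\[
\langle\mathcal{D}\mathcal{N}^n(I_h\Qvec)(\psi(\Pvec_1)-\psi(\Pvec_2)),\Phi_h\rangle = 3B^n(I_h\Qvec,I_h\Qvec,\Pvec_1-\Pvec_2,\Phi_h) - \big(B^n(\Pvec_1,\Pvec_1,\Pvec_1,\Phi_h)-B^n(\Pvec_2,\Pvec_2,\Pvec_2,\Phi_h)\big).
\]
Writing $E_i:=I_h\Qvec-\Pvec_i$ and $E:=\Pvec_1-\Pvec_2=E_2-E_1$, I would substitute $\Pvec_i=I_h\Qvec-E_i$ into the cubic difference and expand by multilinearity of $B^n$. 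By the symmetrisation identity the terms that are linear in $E_i$ collapse to $3B^n(I_h\Qvec,I_h\Qvec,E_i,\Phi_h)$, so their difference cancels the leading term $3B^n(I_h\Qvec,I_h\Qvec,E,\Phi_h)$ exactly. What is left is a finite sum of terms each of which contains at least one factor $E_1$ or $E_2$ and, after a further telescoping in $E=E_2-E_1$, one factor $E$ — schematically, quadratic contributions of the types $B^n(E,E_i,I_h\Qvec,\Phi_h)$, $B^n(E,E,I_h\Qvec,\Phi_h)$ and cubic contributions of the types $B^n(E,E_i,E_i,\Phi_h)$, $B^n(E,E,E_i,\Phi_h)$, $B^n(E,E,E,\Phi_h)$. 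Estimating each with the boundedness of $B^n$ from \cref{lem:bounded-Bn}, using $\|I_h\Qvec\|_1\lesssim\|\Qvec\|_1$ and the hypothesis $\|E_i\|_1\le r(h)$ (hence $\|E\|_1\le 2r(h)$), I would obtain
\[
\langle\mathcal{D}\mathcal{N}^n(I_h\Qvec)(\psi(\Pvec_1)-\psi(\Pvec_2)),\Phi_h\rangle \lesssim \big(r(h)+r(h)^2\big)\,\|E\|_1\,\|\Phi_h\|_1 .
\]

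Finally, since $\psi(\Pvec_1)-\psi(\Pvec_2)\in\mathbf{V}_{h,0}$, I would invoke the inf-sup bound for the perturbed operator \cref{eq:stability-perturb} of \cref{thm:perturbed-Nn} (in its discrete form over $\mathbf{V}_{h,0}$, as in the scalar case) to produce a $\Phi_h$ with $\|\Phi_h\|_1=1$ for which $\tfrac{\beta_Q}{2}\|\psi(\Pvec_1)-\psi(\Pvec_2)\|_1\lesssim\langle\mathcal{D}\mathcal{N}^n(I_h\Qvec)(\psi(\Pvec_1)-\psi(\Pvec_2)),\Phi_h\rangle$, giving $\|\psi(\Pvec_1)-\psi(\Pvec_2)\|_1\lesssim(r(h)+r(h)^2)\|\Pvec_1-\Pvec_2\|_1$. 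By the remark after \cref{lem:mappingball-Q} one may take $r(h)=\mathcal{O}(h^{\min\{\deg,\Bbbk_Q-1\}})$, so for $h$ sufficiently small $r(h)+r(h)^2\lesssim h^{\min\{\deg,\Bbbk_Q-1\}}$, which is the asserted bound (and in particular the constant is $<1$, so $\psi$ genuinely contracts on $\mathcal{B}_{r(h)}(I_h\Qvec)$). I expect the main obstacle to be the bookkeeping in the multilinear expansion: one must keep careful track of the non-symmetry of $B^n$ so that the $3B^n(I_h\Qvec,I_h\Qvec,E,\Phi_h)$ term cancels precisely; once that is arranged, the remaining estimates are routine applications of \cref{lem:bounded-Bn} and the triangle inequality, paralleling the treatment of the cubic term in \cref{lem:mappingball} and \cref{lem:contraction}.
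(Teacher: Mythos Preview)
Your proposal is correct and follows essentially the same approach the paper indicates: the paper does not give a detailed proof of \cref{lem:contraction-Q} but refers back to the steps of \cref{lem:contraction} and remarks that the stability property of \cref{thm:perturbed-Nn} is used, which is exactly what you do. Your observation about the lack of full symmetry in $B^n$ and the accompanying symmetrisation identity is the only genuinely new bookkeeping step beyond the $u$-case, and you have handled it correctly.
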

\begin{remark}
    In the proof of \cref{lem:contraction-Q}, we have particularly used the stability property of the perturbed bilinear form as given by \cref{thm:perturbed-Nn}.
\end{remark}

Hence, the existence and local uniqueness of the discrete solution $\Qvec_h$ can be derived by following similar steps as in the proof of \cref{thm:main1u}.

\begin{theorem}
    \label{thm:H1-Q}
    (Convergence in $\|\cdot\|_1$-norm)
    Let $\Qvec$ be a regular isolated solution of the nonlinear problem \cref{eq:weak-Q}.
    For a sufficiently small $h$, there exists a unique solution $\Qvec_h$ of the discrete nonlinear problem \cref{eq:weak-Q} within the local ball $\mathcal{B}_{r(h)}(I_h \Qvec)$.
    Furthermore, we have
    $
        \|\Qvec - \Qvec_h\|_1 \lesssim h^{\min\{\deg,\Bbbk_Q-1\}}.
    $
\end{theorem}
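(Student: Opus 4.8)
The plan is to follow the template of \cref{thm:main1u}, with the mesh-dependent norm $\vertiii{\cdot}_h$ replaced by the full $\|\cdot\|_1$-norm and the $\mathcal{C}^0$-IP ingredients replaced by their conforming counterparts. The conforming discretisation of \cref{eq:weak-Q} reads: find $\Qvec_h\in\mathbf{V}_{h,b}$ such that $A^n(\Qvec_h,\Pvec_h)+B^n(\Qvec_h,\Qvec_h,\Qvec_h,\Pvec_h)+C^n(\Qvec_h,\Pvec_h)=0$ for all $\Pvec_h\in\mathbf{V}_{h,0}$. First I would observe that a fixed point $\Qvec_h=\psi(\Qvec_h)$ of the nonlinear map $\psi$ is precisely a solution of this discrete problem: setting $\Theta_h=\Qvec_h$ in the defining identity of $\psi$ and expanding the left-hand side via \cref{eq:linear-Q} as $\langle\mathcal{D}\mathcal{N}^n(I_h\Qvec)\Qvec_h,\Phi_h\rangle=A^n(\Qvec_h,\Phi_h)+3B^n(I_h\Qvec,I_h\Qvec,\Qvec_h,\Phi_h)+C^n(\Qvec_h,\Phi_h)$, the cross terms $3B^n(I_h\Qvec,I_h\Qvec,\Qvec_h,\Phi_h)$ cancel and one is left with $A^n(\Qvec_h,\Phi_h)+C^n(\Qvec_h,\Phi_h)+B^n(\Qvec_h,\Qvec_h,\Qvec_h,\Phi_h)=0$, i.e.\ $\mathcal{N}^n(\Qvec_h)\Phi_h=0$.

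The remaining work is carried by the two auxiliary lemmas already stated. By \cref{lem:mappingball-Q}, for $h$ sufficiently small $\psi$ maps the closed convex set $\mathcal{B}_{r(h)}(I_h\Qvec)\subset\mathbf{V}_h$ into itself, with $r(h)=\mathcal{O}(h^{\min\{\deg,\Bbbk_Q-1\}})$; since $\mathbf{V}_h$ is finite-dimensional and $\psi$ is continuous---being defined through the solution of a finite linear system whose left-hand operator is boundedly invertible by the perturbed inf-sup condition of \cref{thm:perturbed-Nn}, with a right-hand side depending polynomially on its argument---Brouwer's fixed point theorem provides at least one fixed point $\Qvec_h$ in that ball, which by the previous paragraph solves the discrete problem. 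By \cref{lem:contraction-Q}, $\psi$ is a contraction on $\mathcal{B}_{r(h)}(I_h\Qvec)$ once the implied constant times $h^{\min\{\deg,\Bbbk_Q-1\}}$ is below one, so the fixed point---hence the discrete solution in that ball---is unique.

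For the error bound, the fixed-point identity together with \cref{lem:mappingball-Q} gives $\|\Qvec_h-I_h\Qvec\|_1=\|\psi(\Qvec_h)-I_h\Qvec\|_1\le r(h)\lesssim h^{\min\{\deg,\Bbbk_Q-1\}}$. Inserting this and the standard interpolation estimate $\|\Qvec-I_h\Qvec\|_1\lesssim h^{\min\{\deg,\Bbbk_Q-1\}}\|\Qvec\|_{\Bbbk_Q}$ into the triangle inequality $\|\Qvec-\Qvec_h\|_1\le\|\Qvec-I_h\Qvec\|_1+\|I_h\Qvec-\Qvec_h\|_1$ yields the asserted rate $\|\Qvec-\Qvec_h\|_1\lesssim h^{\min\{\deg,\Bbbk_Q-1\}}$.

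The genuinely delicate work lies not in this theorem but in its prerequisites. The perturbed discrete inf-sup stability \cref{thm:perturbed-Nn} is what makes $\psi$ well-defined and lets one pass from a residual bound to a $\|\cdot\|_1$-bound in a single step; its proof must absorb the error $\Qvec-I_h\Qvec$ into the cubic term using \cref{lem:bounded-Bn} and requires $h$ small. Inside \cref{lem:mappingball-Q,lem:contraction-Q} the main effort is the algebraic bookkeeping of the cubic residuals: one uses Galerkin consistency (the continuous solution $\Qvec$ satisfies the discrete equations since $\mathbf{V}_{h,0}\subset\mathbf{H}^1_0(\Omega)$) and rewrites combinations such as $2B^n(I_h\Qvec,I_h\Qvec,I_h\Qvec,\Phi_h)-3B^n(I_h\Qvec,I_h\Qvec,\Pvec_h,\Phi_h)+B^n(\Pvec_h,\Pvec_h,\Pvec_h,\Phi_h)$ in terms of the interpolation error $e_I=\Pvec_h-I_h\Qvec$ (exactly as the term $\mathfrak{N}_4$ is handled in \cref{lem:mappingball}), estimating the pieces with \cref{lem:bounded-Bn} and the interpolation estimates for the linear and quadratic contributions. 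Relative to the scalar problem the only new point is that $B^n$ contracts matrices rather than multiplying scalars, but its $\mathbf{H}^1$-boundedness renders the estimates structurally identical, so I expect no obstacle beyond this transcription.
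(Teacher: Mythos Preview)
Your proposal is correct and mirrors the paper's own argument: the paper does not spell out a proof of \cref{thm:H1-Q} but simply points to the template of \cref{thm:main1u}, and you have faithfully transcribed that template to the conforming setting, invoking \cref{lem:mappingball-Q}, \cref{lem:contraction-Q}, Brouwer's fixed point theorem, and the triangle inequality with interpolation estimates in exactly the intended way. Your added verification that fixed points of $\psi$ coincide with discrete solutions, and your closing remarks on where the real work lies (the perturbed inf-sup condition and the cubic bookkeeping inside the auxiliary lemmas), are accurate elaborations that the paper leaves implicit.
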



We again employ an Aubin--Nitsche duality argument to derive $L^2$ error estimates.
To this end, we consider the following linear dual problem to the primal nonlinear problem \cref{eq:strong-Q}: find $\mathbf{N}\in \mathbf{H}^1_0(\Omega)$ such that
\begin{equation}
    \label{eq:dual}
    \begin{cases}
        -K\Delta \Nvec + 4l|\Qvec|^2 \Nvec +8l(\Qvec: \mathbf{N})\Qvec - 2l \Nvec = \Gvec & \text{in }\Omega ,\\
        \Nvec =\mathbf{0} & \text{on }\partial\Omega,
    \end{cases}
\end{equation}
for a given $\Gvec\in \mathbf{L}^2(\Omega)$ (we will make a particular choice for $\Gvec$ in the proof of \cref{thm:L2-Q}).
The weak form of \cref{eq:dual} is to find $\Nvec\in \mathbf{H}^1_0(\Omega)$ such that
\begin{equation}
    \label{eq:weak-dual-Q}
    \langle \mathcal{D}\mathcal{N}^n(\Qvec) \Nvec,\Phi\rangle = A^n(\Nvec, \Phi) + 3B^n(\Qvec,\Qvec,\Nvec,\Phi)+C^n(\Nvec,\Phi)= (\Gvec,\Phi)_0 \quad \forall \Phi\in \mathbf{H}^1_0(\Omega).
\end{equation}

To derive the $L^2$ a priori error estimates, we need two more auxiliary results.
\begin{lemma}
    \label{lem:H2-An}
    For $\Qvec\in \mathbf{H}^{\Bbbk_Q}(\Omega)\cap \mathbf{H}^1_b(\Omega)$, $\Bbbk_Q\ge 2$, $\Nvec \in \mathbf{H}^2(\Omega)\cap \mathbf{H}^1_0(\Omega)$ and $I_h \Qvec \in \mathbf{V}_h\subset \mathbf{H}^1_b(\Omega)$, it holds that
    \begin{equation*}
        A^n(I_h \Qvec-\Qvec, \Nvec) \lesssim h^{\min\{\deg+1, \Bbbk_Q\}} \|\Qvec\|_{\Bbbk_Q}\|\Nvec \|_2.
    \end{equation*}
\end{lemma}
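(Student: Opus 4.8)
The plan is to use the same integration-by-parts trick that appeared in the proof of Lemma~\ref{lem:H4-As}: the key point is that $\Nvec \in \mathbf{H}^2(\Omega)$ implies $\nabla \Nvec$ has a well-defined trace, so the Laplacian of $\Nvec$ can be integrated by parts against the interpolation error $I_h\Qvec - \Qvec$ to recover an extra factor of $h$ beyond the naive estimate. Concretely, since $A^n(\Theta,\Phi) = K\int_\Omega \nabla\Theta \fcolon \nabla\Phi$, I would write
\begin{equation*}
    A^n(I_h\Qvec - \Qvec, \Nvec) = K\int_\Omega \nabla(I_h\Qvec - \Qvec)\fcolon \nabla\Nvec.
\end{equation*}
Unlike the $\mathcal{C}^0$-IP setting of Lemma~\ref{lem:H4-As}, here $I_h\Qvec - \Qvec \in \mathbf{H}^1_0(\Omega)$ is genuinely $\mathbf{H}^1$-conforming (the space $\mathbf{V}_h$ is conforming and both $I_h\Qvec$ and $\Qvec$ carry the data $\Qvec_b$), so there are no interior-facet jump terms to worry about and no need for an enrichment operator.

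\textbf{Main steps.} First I would integrate by parts once, moving the gradient off $I_h\Qvec - \Qvec$:
\begin{equation*}
    K\int_\Omega \nabla(I_h\Qvec - \Qvec)\fcolon \nabla\Nvec = -K\int_\Omega (I_h\Qvec - \Qvec)\fcolon \Delta\Nvec + K\int_{\partial\Omega} (I_h\Qvec - \Qvec)\fcolon (\nabla\Nvec\cdot\nu).
\end{equation*}
The boundary term vanishes because $I_h\Qvec - \Qvec = \Qvec_b - \Qvec_b = \mathbf{0}$ on $\partial\Omega$. Second, apply the Cauchy--Schwarz inequality to the remaining volume integral:
\begin{equation*}
    \left| K\int_\Omega (I_h\Qvec - \Qvec)\fcolon \Delta\Nvec \right| \lesssim \|I_h\Qvec - \Qvec\|_0 \,\|\Delta\Nvec\|_0 \lesssim \|I_h\Qvec - \Qvec\|_0 \,\|\Nvec\|_2.
\end{equation*}
Third, invoke the standard $L^2$ interpolation estimate for the Lagrange interpolant on $\mathbb{Q}_{\deg}$ elements, namely $\|I_h\Qvec - \Qvec\|_0 \lesssim h^{\min\{\deg+1,\Bbbk_Q\}}\|\Qvec\|_{\Bbbk_Q}$, which holds whenever $\Qvec \in \mathbf{H}^{\Bbbk_Q}(\Omega)$ with $\Bbbk_Q \ge 2$ (the cap at $\deg+1$ reflects the finite polynomial degree). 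Chaining these three estimates gives exactly the claimed bound
\begin{equation*}
    A^n(I_h\Qvec - \Qvec, \Nvec) \lesssim h^{\min\{\deg+1,\Bbbk_Q\}}\|\Qvec\|_{\Bbbk_Q}\|\Nvec\|_2.
\end{equation*}

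\textbf{Expected obstacle.} There is essentially no analytical difficulty here — this is the conforming analogue of Lemma~\ref{lem:H4-As}, and it is noticeably simpler because the conforming $\mathbf{H}^1$ setting eliminates all the jump and enrichment machinery. The only minor point requiring care is the justification of the integration by parts: one needs $\Delta\Nvec \in \mathbf{L}^2(\Omega)$ (guaranteed by $\Nvec \in \mathbf{H}^2(\Omega)$) and the vanishing of the boundary term, which follows since $I_h\Qvec - \Qvec$ has zero trace (both functions interpolate or equal the boundary data $\Qvec_b$ — implicitly one assumes $I_h$ preserves the boundary data, consistent with the definition $I_h:\mathbf{H}^1_b(\Omega)\to\mathbf{V}_{h,b}$). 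Everything else is routine, and I would keep the write-up to two or three lines mirroring the proof of Lemma~\ref{lem:H4-As}.
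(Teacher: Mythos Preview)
Your proof is correct and follows exactly the approach one would expect from the paper: although the paper omits the proof of this lemma, it is the direct conforming analogue of Lemma~\ref{lem:H4-As}, and your integration-by-parts argument (boundary term vanishes since $I_h\Qvec-\Qvec\in\mathbf{H}^1_0(\Omega)$, then Cauchy--Schwarz and the standard $L^2$ interpolation estimate) is precisely the natural simplification of that proof to the second-order, $\mathbf{H}^1$-conforming setting.
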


\begin{lemma}
    (Boundedness of the dual solution in the $H^2$-norm)
    \label{lem:G-bound}
    The solution $\Nvec$ to the weak form \cref{eq:weak-dual-Q} of the dual linear problem belongs to $\mathbf{H}^2(\Omega)\cap \mathbf{H}^1_0(\Omega)$ and it holds that
    \begin{equation}
        \label{eq:bdd-M}
        \|\Nvec \|_2 \lesssim \|\Gvec \|_0.
    \end{equation}
\end{lemma}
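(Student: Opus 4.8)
The plan is to mirror the proof of \cref{lem:frhs-bound}: first extract an $\mathbf{H}^1$-bound on $\Nvec$ from the inf-sup stability of the linearised operator, and then promote it to an $\mathbf{H}^2$-bound through standard elliptic regularity for $-\Delta$ after moving the zeroth-order terms of \cref{eq:dual} to the right-hand side.

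First I would test \cref{eq:weak-dual-Q} against the near-optimal direction furnished by the continuous inf-sup condition \cref{eq:inf-sup-DN}: there exists $\Phi\in\mathbf{H}^1_0(\Omega)$ with $\|\Phi\|_1=1$ for which
\begin{equation*}
\beta_Q\|\Nvec\|_1 \lesssim \langle\mathcal{D}\mathcal{N}^n(\Qvec)\Nvec,\Phi\rangle = (\Gvec,\Phi)_0 \le \|\Gvec\|_0\,\|\Phi\|_0 \lesssim \|\Gvec\|_0,
\end{equation*}
where the last step uses the Poincar\'e inequality $\|\Phi\|_0\lesssim\|\Phi\|_1$. This already gives $\Nvec\in\mathbf{H}^1_0(\Omega)$ and $\|\Nvec\|_1\lesssim\|\Gvec\|_0$.

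Next I would rewrite the first line of \cref{eq:dual} as $-K\Delta\Nvec = \tilde{\Gvec}$ with $\tilde{\Gvec}\coloneqq \Gvec - 4l|\Qvec|^2\Nvec - 8l(\Qvec:\Nvec)\Qvec + 2l\Nvec$, and show that $\tilde{\Gvec}\in\mathbf{L}^2(\Omega)$ with $\|\tilde{\Gvec}\|_0\lesssim\|\Gvec\|_0$. Since $\Qvec$ is a regular solution, $\Qvec\in\mathbf{H}^{\Bbbk_Q}(\Omega)$ with $\Bbbk_Q\ge 2$, hence $\Qvec\in\mathbf{L}^\infty(\Omega)$ by the Sobolev embedding $\mathbf{H}^2(\Omega)\hookrightarrow\mathbf{L}^\infty(\Omega)$ (valid for $d\in\{2,3\}$); combining this with the pointwise bound $|(\Qvec:\Nvec)\Qvec|\le|\Qvec|^2|\Nvec|$ and the $\mathbf{H}^1$-estimate above yields
\begin{equation*}
\|\tilde{\Gvec}\|_0 \lesssim \|\Gvec\|_0 + \bigl(1+\|\Qvec\|_\infty^2\bigr)\|\Nvec\|_0 \lesssim \|\Gvec\|_0 .
\end{equation*}
This is the exact analogue of the estimate for $\|\nabla\cdot(\nabla\cdot(\mathcal{D}^2\chi))\|_0$ in \cref{lem:frhs-bound}, reading the $3B^n(\Qvec,\Qvec,\Nvec,\cdot)$ and $C^n(\Nvec,\cdot)$ terms of \cref{eq:weak-dual-Q} as $\mathbf{L}^2(\Omega)$ functions.

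Finally, since $\Omega$ is convex, classical $\mathbf{H}^2$-regularity for the homogeneous Dirichlet problem for $-\Delta$, applied componentwise, gives $\Nvec\in\mathbf{H}^2(\Omega)$ together with $\|\Nvec\|_2\lesssim\|\tilde{\Gvec}\|_0+\|\Nvec\|_1\lesssim\|\Gvec\|_0$, which is \cref{eq:bdd-M}. The only real subtlety, and it is very mild, is the $\mathbf{L}^2$-control of the cubic terms $|\Qvec|^2\Nvec$ and $(\Qvec:\Nvec)\Qvec$: this is precisely where the regularity hypothesis $\Qvec\in\mathbf{H}^{\Bbbk_Q}(\Omega)$, $\Bbbk_Q\ge 2$, enters, through $\Qvec\in\mathbf{L}^\infty(\Omega)$; without it one would only have $\Qvec\in\mathbf{L}^p$ for finite $p$ and the product need not land in $\mathbf{L}^2$. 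Everything else is a transcription of the argument for \cref{lem:frhs-bound}, with the biharmonic elliptic shift replaced by the standard second-order shift for $-\Delta$ on a convex domain.
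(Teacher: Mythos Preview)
Your proposal is correct and is precisely the argument the paper intends: the paper omits the proof of this lemma for brevity (cf.\ the remark ``Since the approach is similar to the previous subsections, we omit some details for brevity''), and your mirroring of \cref{lem:frhs-bound}---inf-sup to get $\|\Nvec\|_1\lesssim\|\Gvec\|_0$, then $\mathbf{L}^2$-control of the lower-order terms via $\Qvec\in\mathbf{H}^{\Bbbk_Q}\hookrightarrow\mathbf{L}^\infty$, then the standard $\mathbf{H}^2$ shift for $-\Delta$ on a convex domain---is exactly that analogue.
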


Finally, we are ready to deduce an optimal $L^2$ error estimate.
\begin{theorem}
    \label{thm:L2-Q}
    ($L^2$ error estimate)
    Let $\Qvec$ be a regular solution of the nonlinear weak problem \cref{eq:weak-Q} and $\Qvec_h$ be the approximate solution to the discrete problem (having the same weak formulation as \cref{eq:weak-Q}). Then
    \begin{equation}
        \label{eq:l2-Q}
        \|\Qvec-\Qvec_h\|_0 \lesssim h^{\min\{\deg+1,\Bbbk_Q\}} \left(2+\left(3+h+h^2+h^{\min\{\deg,\Bbbk_Q-1\}}+h^{\min\{\deg+1,\Bbbk_Q\}}\right)\|\Qvec\|_{\Bbbk_Q}^2\right)\|\Qvec\|_{\Bbbk_Q}.
    \end{equation}
\end{theorem}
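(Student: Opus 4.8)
The plan is to run an Aubin--Nitsche duality argument in close analogy with the proof of \cref{thm:l2-u}, exploiting the extra regularity of the dual solution to gain one power of $h$ over the energy-norm rate of \cref{thm:H1-Q}. First I would take $\Gvec = I_h\Qvec - \Qvec_h \in \mathbf{V}_{h,0}\subset \mathbf{H}^1_0(\Omega)$ in the dual problem \cref{eq:dual} and test its weak form \cref{eq:weak-dual-Q} with $\Phi = I_h\Qvec - \Qvec_h$, which yields $\|I_h\Qvec-\Qvec_h\|_0^2 = \langle \mathcal{D}\mathcal{N}^n(\Qvec)\Nvec,\,I_h\Qvec-\Qvec_h\rangle$. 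Since the discretisation is conforming, $I_h\Nvec\in\mathbf{V}_{h,0}$ is an admissible test function for both the continuous weak form \cref{eq:weak-Q} (solved by $\Qvec$) and the discrete one (solved by $\Qvec_h$), so $\mathcal{N}^n(\Qvec)(I_h\Nvec)=\mathcal{N}^n(\Qvec_h)(I_h\Nvec)=0$ and I may add the vanishing quantity $\mathcal{N}^n(\Qvec_h)(I_h\Nvec)-\mathcal{N}^n(\Qvec)(I_h\Nvec)$ to the right-hand side. Expanding both contributions into their $A^n$, $B^n$, $C^n$ constituents gives the working identity.

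Next I would regroup exactly as the $\mathfrak{U}_i$ in \cref{thm:l2-u}. Since $A^n(\cdot,\cdot)$ and $C^n(\cdot,\cdot)$ are symmetric, the pairs $A^n(\Nvec,I_h\Qvec-\Qvec_h)+A^n(\Qvec_h-\Qvec,I_h\Nvec)$ and $C^n(\Nvec,I_h\Qvec-\Qvec_h)+C^n(\Qvec_h-\Qvec,I_h\Nvec)$ collapse, after inserting $\pm\Qvec$ and $\pm\Nvec$, to $\mathfrak{Q}_1 \coloneqq A^n(I_h\Qvec-\Qvec,\Nvec)+A^n(\Qvec_h-\Qvec,I_h\Nvec-\Nvec)$ and $\mathfrak{Q}_2 \coloneqq C^n(I_h\Qvec-\Qvec,\Nvec)+C^n(\Qvec_h-\Qvec,I_h\Nvec-\Nvec)$, the potentially suboptimal terms $A^n(\Qvec-\Qvec_h,\Nvec)$ and $C^n(\Qvec-\Qvec_h,\Nvec)$ cancelling against their mirror images. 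For the cubic part I would use $3B^n(\Qvec,\Qvec,\Nvec,\Phi)=3B^n(\Qvec,\Qvec,\Phi,\Nvec)$, which holds because $B^n$ as defined in \cref{eq:def-Bn} is symmetric in its last two arguments whenever its first two coincide; inserting $\pm I_h\Nvec$ and $\pm I_h\Qvec$ then produces the interpolation-gap terms $\mathfrak{Q}_3 \coloneqq 3B^n(\Qvec,\Qvec,I_h\Qvec-\Qvec_h,\Nvec-I_h\Nvec)+3B^n(\Qvec,\Qvec,I_h\Qvec-\Qvec,I_h\Nvec)$ together with
\[
\mathfrak{Q}_4 \coloneqq 2B^n(\Qvec,\Qvec,\Qvec,I_h\Nvec)-3B^n(\Qvec,\Qvec,\Qvec_h,I_h\Nvec)+B^n(\Qvec_h,\Qvec_h,\Qvec_h,I_h\Nvec),
\]
which, with $e_Q \coloneqq \Qvec_h-\Qvec$, is precisely the second-order Taylor remainder of $\Psi\mapsto B^n(\Psi,\Psi,\Psi,I_h\Nvec)$ about $\Qvec$, hence of order $\|e_Q\|_1^2$.

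It then remains to bound the four terms. For $\mathfrak{Q}_1$, \cref{lem:H2-An} gives $A^n(I_h\Qvec-\Qvec,\Nvec)\lesssim h^{\min\{\deg+1,\Bbbk_Q\}}\|\Qvec\|_{\Bbbk_Q}\|\Nvec\|_2$, and the second summand is bounded via \cref{lem:bounded-An}, \cref{thm:H1-Q} and $\|\Nvec-I_h\Nvec\|_1\lesssim h\|\Nvec\|_2$, the extra factor $h$ converting the $h^{\min\{\deg,\Bbbk_Q-1\}}$ rate of $\|\Qvec-\Qvec_h\|_1$ into the optimal one. $\mathfrak{Q}_2$ is handled the same way, using the full-order bound $\|I_h\Qvec-\Qvec\|_0\lesssim h^{\min\{\deg+1,\Bbbk_Q\}}\|\Qvec\|_{\Bbbk_Q}$ on its first summand. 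In $\mathfrak{Q}_3$, the summand containing $\Nvec-I_h\Nvec$ is controlled by \cref{lem:bounded-Bn}, $\|I_h\Qvec-\Qvec_h\|_1\lesssim h^{\min\{\deg,\Bbbk_Q-1\}}\|\Qvec\|_{\Bbbk_Q}$ and $\|\Nvec-I_h\Nvec\|_1\lesssim h\|\Nvec\|_2$. Finally $\mathfrak{Q}_4\lesssim(\|\Qvec\|_{\Bbbk_Q}+\|e_Q\|_1)\|e_Q\|_1^2\|I_h\Nvec\|_1$ by multilinearity and \cref{lem:bounded-Bn}, with $\|e_Q\|_1\lesssim h^{\min\{\deg,\Bbbk_Q-1\}}\|\Qvec\|_{\Bbbk_Q}$ by \cref{thm:H1-Q}.

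Collecting the bounds, using $\|I_h\Nvec\|_1\lesssim\|\Nvec\|_2$ and the dual stability estimate $\|\Nvec\|_2\lesssim\|\Gvec\|_0=\|I_h\Qvec-\Qvec_h\|_0$ of \cref{lem:G-bound}, one divides through by $\|I_h\Qvec-\Qvec_h\|_0$ to obtain $\|I_h\Qvec-\Qvec_h\|_0\lesssim h^{\min\{\deg+1,\Bbbk_Q\}}(\dots)$ with the parenthetical factor as in \cref{eq:l2-Q}, and then the triangle inequality $\|\Qvec-\Qvec_h\|_0\le\|\Qvec-I_h\Qvec\|_0+\|I_h\Qvec-\Qvec_h\|_0$ together with the $L^2$ interpolation estimate finishes the proof. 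The step I expect to be the main obstacle is the term $3B^n(\Qvec,\Qvec,I_h\Qvec-\Qvec,I_h\Nvec)$ in $\mathfrak{Q}_3$: the generic $\mathbf{H}^1$-boundedness of $B^n$ from \cref{lem:bounded-Bn} is too crude there, as it only delivers the suboptimal power $h^{\min\{\deg,\Bbbk_Q-1\}}$, so one must instead estimate it directly by H\"older's inequality together with the embedding $H^2(\Omega)\hookrightarrow L^\infty(\Omega)$, putting $\|\Qvec\|_{L^\infty}^2$ and $\|I_h\Nvec\|_0$ on the smooth factors and the full-order estimate $\|I_h\Qvec-\Qvec\|_0\lesssim h^{\min\{\deg+1,\Bbbk_Q\}}\|\Qvec\|_{\Bbbk_Q}$ on the remaining slot; a secondary point requiring care is verifying the algebraic symmetry of $B^n$ underlying the cancellations that remove $A^n(\Qvec-\Qvec_h,\Nvec)$ and $C^n(\Qvec-\Qvec_h,\Nvec)$.
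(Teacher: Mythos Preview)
Your proposal is correct and follows essentially the same Aubin--Nitsche route as the paper, which (though it omits the detailed proof of \cref{thm:L2-Q}) explicitly indicates that the argument mirrors that of \cref{thm:l2-u}; your $\mathfrak{Q}_1$--$\mathfrak{Q}_4$ are the exact analogues of the paper's $\mathfrak{U}_1$--$\mathfrak{U}_4$, and your identification of the delicate summand $3B^n(\Qvec,\Qvec,I_h\Qvec-\Qvec,I_h\Nvec)$ requiring the $H^2\hookrightarrow L^\infty$ embedding (rather than the crude $\mathbf{H}^1$-bound \cref{eq:bound1-Bn}) precisely matches the paper's treatment of the corresponding term in $\mathfrak{U}_3$.
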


We will verify these results in the next section.

\section{Numerical experiments}
\label{sec:num}

The proceeding section presents some a priori error estimates for both $\Qvec$ and $u$ in the decoupled case $q=0$.
We now test the convergence rate of the finite element approximations by the method of manufactured solutions (MMS) and experimentally investigate the coupled case $q\ne 0$ in two dimensions.
To this end, we choose a nontrivial solution for each state variable and add an appropriate source term to the equilibrium equations, 
thus modifying the energy accordingly.
We can then compute the numerical convergence order.

\subsection{Test 1: on the unit square}

In this test, the numerical runs are performed on the unit square $\Omega=(0,1)^2$ and we take the following exact expressions for each state variable,
\begin{equation}
    \label{eq:mms}
    \begin{aligned}
        Q_{11}^{e} &= \left(\cos \left(\frac{\pi (2y-1)(2x-1)}{8}\right)\right)^2 - \frac{1}{2},\\
        Q_{12}^{e} &= \cos\left( \frac{\pi (2y-1)(2x-1)}{8}\right) \sin\left(\frac{\pi (2y-1)(2x-1)}{8}\right),\\
        u^{e} &= 10\left((x-1)x(y-1)y \right)^3.
    \end{aligned}
\end{equation}
Then, in conducting the MMS, we are to solve the following governing equations
\begin{equation*}
    \begin{cases}
         4Bq^4 u^2Q_{11} + 2Bq^2 u \left(\partial_x^2 u - \partial_y^2 u\right) - 2K\Delta Q_{11} - 4l Q_{11} + 16 lQ_{11}\left(Q_{11}^2+Q_{12}^2\right) = \mathfrak{s}_1, \\
         4Bq^4 u^2Q_{12} + 4Bq^2 u \left(\partial_x\partial_y u\right) - 2K\Delta Q_{12} - 4l Q_{12} + 16 lQ_{12}\left(Q_{11}^2+Q_{12}^2\right) = \mathfrak{s}_2, \\
         a_1 u + a_2 u^2 +a_3 u^3+ 2B\nabla\cdot(\nabla\cdot(\mathcal{D}^2 u)) + Bq^4 \left(4\left(Q_{11}^2+Q_{12}^2\right)+1\right)u + 2Bq^2(t_1+t_2) = \mathfrak{s}_3,
    \end{cases}
\end{equation*}
subject to Dirichlet boundary conditions for both $u$ and $\Qvec$ and a natural boundary condition for $u$.
Here, source terms $\mathfrak{s}_1$, $\mathfrak{s}_2$ and $\mathfrak{s}_3$ are derived by substituting \cref{eq:mms} to the left hand sides,
and $t_1$ and $t_2$ are given by
\begin{equation*}
    \begin{split}
        t_1 &\coloneqq (Q_{11}+1/2) \partial_x^2 u + (-Q_{11}+1/2)\partial_y^2 u + 2Q_{12}\partial_x\partial_y u, \\
        t_2 &\coloneqq \partial_x^2\left(u\left(Q_{11}+1/2\right)\right) + \partial_y^2(u(-Q_{11}+1/2)) + 2\partial_x\partial_y (u Q_{12}).
    \end{split}
\end{equation*}

We partition the domain $\Omega = (0, 1)^2$ into $N\times N$ squares with uniform mesh size $h=\frac{1}{N}$ ($N=6$, $12$, $24$, $48$) and denote the numerical solutions by $u_h$, $Q_{11,h}$ and $Q_{12,h}$.
The numerical errors of $u$ and $\Qvec$ in the $\|\cdot\|_0$-, $\|\cdot\|_1$- and $\vertiii{\cdot}_h$-norms are defined as
    \begin{align*}
        &\|\textbf{e}_u\|_0 = \|u^{e}-u_h\|_0,
        \quad \|\textbf{e}_u\|_1 = \|u^{e}-u_h\|_1, \quad \vertiii{\textbf{e}_u}_h = \vertiii{u^e-u_h}_h, \\
        \|\textbf{e}_\Qvec\|_0 = &\|(Q_{11}^{e}, Q_{12}^{e}) - (Q_{11,h},Q_{12,h})\|_0,
        \quad \|\textbf{e}_\Qvec\|_1 = \|(Q_{11}^{e}, Q_{12}^{e}) - (Q_{11,h},Q_{12,h})\|_1.
    \end{align*}
The convergence order is then calculated from the formula
$
    \log_2\left( \frac{\text{error}_{h/2}}{\text{error}_h}\right).
$
Throughout this section, we use the parameter values
$a_1 = -10$, $a_2 = 0$, $a_3 =10$, $B = 10^{-5}$, $K = 0.3$ and $l = 30$,
similar to the simulations of oily streaks in \cite{xia-2021-article}.

\begin{remark}
    Since this is purely a numerical verification exercise, the manufactured solution can be physically unrealistic.
    However, we must specify a reasonable initial guess for Newton's method, due to the nonlinearity of the problem.
    The initial guess throughout this section is taken to be $\left(\frac{1}{2}(\text{exact solution}) + 10^{-9}\right)$.
\end{remark}

\subsubsection{Convergence rate for $q=0$}
For $\Qvec$ we expect both optimal $H^1$ and $L^2$ rates, as illustrated in \cref{thm:L2-Q,thm:H1-Q}.
\cref{table:decoupled-Q} presents the numerical convergence rate for the finite elements $[\mathbb{Q}_1]^2$, $[\mathbb{Q}_2]^2$ and $[\mathbb{Q}_3]^2$.
Optimal $L^2$ and $H^1$ rates are shown with all choices of finite elements, as predicted.

\begin{table}[!ht]
\centering
    \begin{tabular}{ccllll}
        \toprule
        & $N=\frac{1}{h}$ & $\|\textbf{e}_\Qvec\|_0$ & rate & $\|\textbf{e}_\Qvec\|_1$ & rate \\
         \midrule
         \multirow{5}{*}{$[\mathbb{Q}_1]^2$}
        & 6 & 8.12 $\times 10^{-4}$ &-- & 3.78 $\times 10^{-2}$ &-- \\
        &12 & 2.02 $\times 10^{-4}$ & 2.01 & 1.88 $\times 10^{-2}$ & 1.01\\
        &24 & 5.05 $\times 10^{-5}$ & 2.00 & 9.39 $\times 10^{-3}$ & 1.00\\
        &48 & 1.26 $\times 10^{-5}$ & 2.00 & 4.69 $\times 10^{-3}$ & 1.00\\
        \bottomrule
         \multirow{5}{*}{$[\mathbb{Q}_2]^2$}
        & 6 & 2.92 $\times 10^{-5}$ &-- & 1.11 $\times 10^{-3}$ &-- \\
        &12 & 3.90 $\times 10^{-6}$ & 2.90 & 2.71 $\times 10^{-4}$ & 2.04\\
        &24 & 5.02 $\times 10^{-7}$ & 2.96 & 6.72 $\times 10^{-5}$ & 2.01\\
        &48 & 6.36 $\times 10^{-8}$ & 2.99 & 1.68 $\times 10^{-5}$ & 2.00\\
        \bottomrule
        \multirow{5}{*}{$[\mathbb{Q}_3]^2$}
        & 6 & 3.02 $\times 10^{-7}$ &-- & 2.25 $\times 10^{-5}$ &-- \\
        &12 & 2.17 $\times 10^{-8}$ & 3.80 & 2.72 $\times 10^{-6}$ & 3.05\\
        &24 & 1.45 $\times 10^{-9}$ & 3.90 & 3.34 $\times 10^{-7}$ & 3.03\\
        &48 & 9.33 $\times 10^{-11}$ & 3.96 & 4.13 $\times 10^{-8}$ & 3.01\\
        \bottomrule
    \end{tabular}
    \caption{Test 1: Convergence rates for $\Qvec$ with different degrees of polynomial approximation, in the decoupled case $q=0$.}
    \label{table:decoupled-Q}
\end{table}

Regarding the density variation $u$, we first present the convergence behaviour of the consistent discrete formulation \cref{eq:weak-u} with penalty parameter $\epsilon = 1$, since we have proven the optimal error rate in the mesh-dependent norm $\vertiii{\cdot}_h$.
The errors and convergence orders are listed in \cref{table:u-eps1}.
Optimal rates are observed in the $\vertiii{\cdot}_h$-norm.
Furthermore, optimal orders of convergence in the $L^2$-norm are shown for approximating polynomials of degree greater than $2$, while a sub-optimal rate in the $L^2$-norm is given for piecewise quadratic polynomials, exactly as expected.
Sub-optimal convergence rates for quadratic polynomials were also illustrated in the numerical results of \cite{suli-2007-article}.
We also tested the convergence with the penalty parameter $\epsilon=5\times 10^4$ and found that the discrete norms are very similar to \cref{table:u-eps1}.
We therefore avoid repeating the details here.

\begin{table}[!ht]
\centering
    \begin{tabular}{ccllllll}
        \toprule
        & $N=\frac{1}{h}$ & $\|\textbf{e}_u\|_0$ & rate & $\|\textbf{e}_u\|_1$ & rate& $\vertiii{\textbf{e}_u}_h$ & rate \\
         \midrule
         \multirow{4}{*}{$\mathbb{Q}_2$}
        & 6 & 1.17 $\times 10^{-5}$ & -- & 3.46 $\times 10^{-4}$ &-- & 1.36 $\times 10^{-2}$ & --\\
        & 12 & 2.60 $\times 10^{-6}$ & 2.17 & 9.81 $\times 10^{-5}$ & 1.82 & 7.25 $\times 10^{-3}$ & 0.91\\
        & 24 & 6.37 $\times 10^{-7}$ & 2.03 & 2.54 $\times 10^{-5}$ & 1.95 & 3.54 $\times 10^{-3}$ & 1.03\\
        & 48 & 1.82 $\times 10^{-7}$ & 1.80 & 6.88 $\times 10^{-6}$ & 1.88 & 1.76 $\times 10^{-3}$ & 1.01\\
        \bottomrule
        \multirow{4}{*}{$\mathbb{Q}_3$}
        &6 & 4.73 $\times 10^{-6}$ & -- & 1.32 $\times 10^{-4}$ &-- & 4.98 $\times 10^{-3}$ & --\\
        &12 & 3.32 $\times 10^{-7}$ & 3.83 & 1.41 $\times 10^{-5}$ & 3.23 & 9.96 $\times 10^{-4}$ & 2.32\\
        &24 & 2.12 $\times 10^{-8}$ & 3.97 & 1.63 $\times 10^{-6}$ & 3.12 & 2.46 $\times 10^{-4}$ & 2.02\\
        &48 & 1.32 $\times 10^{-9}$ & 4.00 & 1.99 $\times 10^{-7}$ & 3.03 & 6.14 $\times 10^{-5}$ & 2.00\\
        \bottomrule
        \multirow{4}{*}{$\mathbb{Q}_4$}
        &6 & 2.01 $\times 10^{-7}$ & -- & 7.76 $\times 10^{-6}$ &-- & 3.94 $\times 10^{-4}$ & --\\
        &12 & 5.40 $\times 10^{-9}$ & 5.22 & 4.30 $\times 10^{-7}$ & 4.17 & 4.88 $\times 10^{-5}$ & 3.01\\
        &24 & 1.68 $\times 10^{-10}$ & 5.00 & 2.68 $\times 10^{-8}$ & 4.00 & 6.11 $\times 10^{-6}$ & 2.99\\
        &48 & 5.27 $\times 10^{-12}$ & 4.99 & 1.68 $\times 10^{-9}$ & 3.99 & 7.64 $\times 10^{-7}$ & 3.00\\
        \bottomrule
    \end{tabular}
    \caption{Test 1: Convergence rates using the consistent discrete formulation \cref{eq:weak-u} with penalty parameter $\epsilon=1$ and different polynomial degrees, in the decoupled case $q=0$.}
    \label{table:u-eps1}
\end{table}

We next give the error rates for the inconsistent discrete formulation \cref{eq:weak-u-actual}.
We illustrate the discrete norms and the computed convergence rates in \cref{table:uactual-eps1} with the penalty parameter $\epsilon=1$.
It can be observed that only first order convergence is obtained in the $H^2$-like norm $\vertiii{\cdot}_h$ even with different approximating polynomials.
Moreover, we notice by comparing \cref{table:u-eps1,table:uactual-eps1} that the convergence rate deteriorates slightly for polynomials of degree 3 (although not for degree 4).
This, however, can be improved by choosing a larger penalty parameter, as shown in \cref{table:uactual-eps5e4} with $\epsilon=5\times 10^4$, where optimal rates are shown for the discrete norms $\vertiii{\cdot}_h$, $\|\cdot\|_1$ and $\|\cdot\|_0$ for all polynomial degrees (except only sub-optimal in $\|\cdot\|_0$ when a piecewise quadratic polynomial is used as the approximation). The inconsistent discrete formulation appears to be a reasonable choice when a sufficiently large penalty parameter is used.

\begin{table}[!ht]
\centering
    \begin{tabular}{ccllllll}
        \toprule
        & $N=\frac{1}{h}$ & $\|\textbf{e}_u\|_0$ & rate & $\|\textbf{e}_u\|_1$ & rate& $\vertiii{\textbf{e}_u}_h$ & rate \\
         \midrule
         \multirow{4}{*}{$\mathbb{Q}_2$}
        & 6 & 3.50 $\times 10^{-6}$ & -- & 1.06 $\times 10^{-4}$ &-- & 5.60 $\times 10^{-3}$ & --\\
        & 12 & 8.76 $\times 10^{-8}$ & 5.32 & 5.41 $\times 10^{-6}$ & 4.29 & 2.56 $\times 10^{-3}$ & 1.13\\
        & 24 & 1.77 $\times 10^{-8}$ & 2.31 & 7.47 $\times 10^{-7}$ & 2.86 & 1.28 $\times 10^{-3}$ & 0.99\\
        & 48 & 4.35 $\times 10^{-9}$ & 2.02 & 1.24 $\times 10^{-7}$ & 2.56 & 6.42 $\times 10^{-4}$ & 1.00\\
        \bottomrule
        \multirow{4}{*}{$\mathbb{Q}_3$}
        &6 & 6.47 $\times 10^{-6}$ & -- & 1.86 $\times 10^{-4}$ &-- & 7.59 $\times 10^{-3}$ & --\\
        &12 & 3.40 $\times 10^{-7}$ & 4.25 & 1.73 $\times 10^{-5}$ & 3.43 & 2.74 $\times 10^{-3}$ & 1.47\\
        &24 & 1.98 $\times 10^{-8}$ & 4.10 & 2.03 $\times 10^{-6}$ & 3.09 & 1.31 $\times 10^{-3}$ & 1.07\\
        &48 & 3.73 $\times 10^{-9}$ & 2.39 & 2.63 $\times 10^{-7}$ & 2.95 & 6.45 $\times 10^{-4}$ & 1.02\\
        \bottomrule
        \multirow{4}{*}{$\mathbb{Q}_4$}
        &6 & 2.05 $\times 10^{-7}$ & -- & 7.85 $\times 10^{-6}$ &-- & 3.93 $\times 10^{-4}$ & --\\
        &12 & 5.40 $\times 10^{-9}$ & 5.24 & 4.31 $\times 10^{-7}$ & 4.19 & 4.88 $\times 10^{-5}$ & 3.01\\
        &24 & 1.68 $\times 10^{-10}$ & 5.00 & 2.68 $\times 10^{-8}$ & 4.01 & 6.11 $\times 10^{-6}$ & 3.00\\
        &48 & 5.27 $\times 10^{-12}$ & 5.00 & 1.67 $\times 10^{-9}$ & 4.00 & 7.64 $\times 10^{-7}$ & 3.00\\
        \bottomrule
    \end{tabular}
    \caption{Test 1: Convergence rates using the inconsistent discrete formulation \cref{eq:weak-u-actual} with penalty parameter $\epsilon=1$ and different polynomial degrees, in the decoupled case $q=0$.}
    \label{table:uactual-eps1}
\end{table}

\begin{table}[!ht]
\centering
    \begin{tabular}{ccllllll}
        \toprule
        & $N=\frac{1}{h}$ & $\|\textbf{e}_u\|_0$ & rate & $\|\textbf{e}_u\|_1$ & rate& $\vertiii{\textbf{e}_u}_h$ & rate \\
         \midrule
         \multirow{4}{*}{$\mathbb{Q}_2$}
        & 6 & 1.17 $\times 10^{-5}$ & -- & 3.48 $\times 10^{-4}$ &-- & 1.36 $\times 10^{-2}$ & --\\
        & 12 & 2.62 $\times 10^{-6}$ & 2.16 & 9.86 $\times 10^{-5}$ & 1.82 & 7.26 $\times 10^{-3}$ & 0.91\\
        & 24 & 6.38 $\times 10^{-7}$ & 2.04 & 2.54 $\times 10^{-5}$ & 1.96 & 3.54 $\times 10^{-3}$ & 1.03\\
        & 48 & 1.82 $\times 10^{-7}$ & 1.81 & 6.88 $\times 10^{-6}$ & 1.88 & 1.76 $\times 10^{-3}$ & 1.01\\
        \bottomrule
        \multirow{4}{*}{$\mathbb{Q}_3$}
        &6 & 4.80 $\times 10^{-6}$ & -- & 1.35 $\times 10^{-4}$ &-- & 4.92 $\times 10^{-3}$ & --\\
        &12 & 3.35 $\times 10^{-7}$ & 3.84 & 1.43 $\times 10^{-5}$ & 3.23 & 9.86 $\times 10^{-4}$ & 2.32\\
        &24 & 2.14 $\times 10^{-8}$ & 3.97 & 1.63 $\times 10^{-6}$ & 3.13 & 2.45 $\times 10^{-4}$ & 2.01\\
        &48 & 1.33 $\times 10^{-9}$ & 4.01 & 1.99 $\times 10^{-7}$ & 3.04 & 6.13 $\times 10^{-5}$ & 2.00\\
        \bottomrule
        \multirow{4}{*}{$\mathbb{Q}_4$}
        &6 & 2.05 $\times 10^{-7}$ & -- & 7.85 $\times 10^{-6}$ &-- & 3.93 $\times 10^{-4}$ & --\\
        &12 & 5.40 $\times 10^{-9}$ & 5.24 & 4.31 $\times 10^{-7}$ & 4.19 & 4.88 $\times 10^{-5}$ & 3.01\\
        &24 & 1.68 $\times 10^{-10}$ & 5.00 & 2.68 $\times 10^{-8}$ & 4.01 & 6.11 $\times 10^{-6}$ & 3.00\\
        &48 & 5.27 $\times 10^{-12}$ & 5.00 & 1.67 $\times 10^{-9}$ & 4.00 & 7.64 $\times 10^{-7}$ & 3.00\\
        \bottomrule
    \end{tabular}
    \caption{Test 1: Convergence rates using the inconsistent discrete formulation \cref{eq:weak-u-actual} with penalty parameter $\epsilon=5\times 10^4$ and different polynomial degrees, in the decoupled case $q=0$.}
    \label{table:uactual-eps5e4}
\end{table}

\subsubsection{Convergence rate for $q\ne 0$}
We next investigate the numerical convergence behaviour in the coupled case, i.e., $q\ne 0$, in this subsection. Its analysis remains future work,
but since it is the coupled case that is solved in practice it is important to assure ourselves that the discretisation is sensible.
For brevity, we fix the model parameter $q=30$. 

We examine the inconsistent discretisation for $u$ with penalty parameter $\epsilon=5\times 10^4$.
In unreported preliminary experiments, we observed that the error in $\Qvec$ is governed by the lower of the degrees of the polynomials used for $\Qvec$ and $u$.
We thus give the convergence rates for $u$ and $\Qvec$ separately in \cref{table:q30-eps5e4-u,table:q30-eps5e4-Q}, with the other degree fixed appropriately.
It can be seen that $\Qvec$ retains optimal rates in both the $H^1$ and $L^2$ norms, and though there are some fluctuations of the order for $u$, it still possesses very similar convergence rates when compared with the decoupled case described in \cref{table:uactual-eps5e4}.

\begin{table}[!ht]
\centering
    \begin{tabular}{ccllllll}
        \toprule
        &$N=\frac{1}{h}$ & $\|\textbf{e}_u\|_0$ & rate & $\|\textbf{e}_u\|_1$ & rate & $\vertiii{\textbf{e}_u}_h$ & rate \\
         \midrule
         \multirow{4}{*}{$\mathbb{Q}_2$}
        &6 & 1.21 $\times 10^{-5}$ & -- & 3.59 $\times 10^{-4}$ &-- & 1.37 $\times 10^{-2}$ & -- \\
        &12 & 3.98 $\times 10^{-6}$ & 1.61 & 1.42 $\times 10^{-4}$ & 1.34 & 8.30 $\times 10^{-3}$ & 0.72 \\
        &24 & 1.57 $\times 10^{-6}$ & 1.35 & 4.99 $\times 10^{-5}$ & 1.51 & 3.89 $\times 10^{-3}$ & 1.09 \\
        &48 & 2.58 $\times 10^{-7}$ & 2.60 & 9.06 $\times 10^{-6}$ & 2.46 & 1.78 $\times 10^{-3}$ & 1.13 \\
        \bottomrule
        \multirow{4}{*}{$\mathbb{Q}_3$}
        &6 & 7.36 $\times 10^{-6}$ & -- & 2.25 $\times 10^{-4}$ &-- & 9.10 $\times 10^{-3}$ &-- \\
        &12 & 4.13 $\times 10^{-7}$ & 4.16 & 1.86 $\times 10^{-5}$ & 3.60 & 1.11 $\times 10^{-3}$ & 3.03 \\
        &24 & 4.23 $\times 10^{-8}$ & 3.29 & 2.24 $\times 10^{-6}$ & 3.05 & 2.53 $\times 10^{-4}$ & 2.14 \\
        &48 & 3.01 $\times 10^{-9}$ & 3.81 & 2.28 $\times 10^{-7}$ & 3.29 & 6.15 $\times 10^{-5}$ & 2.04 \\
        \bottomrule
    \end{tabular}
    \caption{Test 1: Convergence rates for $u$ with $q=30$ and penalty parameter $\epsilon=5\times 10^4$ with the inconsistent discretisation \cref{eq:weak-u-actual} for $u$, fixing the approximation for $\Qvec$ to be with the $[\mathbb{Q}_2]^2$ element.}
\label{table:q30-eps5e4-u}
\end{table}

\begin{table}[!ht]
\centering
    \begin{tabular}{ccllll}
        \toprule
        &$N=\frac{1}{h}$ & $\|\textbf{e}_\Qvec\|_0$ & rate & $\|\textbf{e}_\Qvec\|_1$ & rate \\
         \midrule
         \multirow{4}{*}{$[\mathbb{Q}_1]^2$}
        &6 & 8.12 $\times 10^{-4}$ &-- & 3.78 $\times 10^{-2}$ & --\\
        &12 &2.02 $\times 10^{-4}$ & 2.01 & 1.88 $\times 10^{-2}$ & 1.01 \\
        &24 &5.05 $\times 10^{-5}$ & 2.00 & 9.39 $\times 10^{-3}$ & 1.00 \\
        &48 &1.26 $\times 10^{-5}$ & 2.00 & 4.69 $\times 10^{-3}$ & 1.00 \\
        \bottomrule
        \multirow{4}{*}{$[\mathbb{Q}_2]^2$}
        &6 & 2.92 $\times 10^{-5}$ &-- & 1.11 $\times 10^{-3}$ & --\\
        &12 &3.90 $\times 10^{-6}$ & 2.90 & 2.71 $\times 10^{-4}$ &2.04 \\
        &24 &5.02 $\times 10^{-7}$ & 2.96 & 6.72 $\times 10^{-5}$ & 2.01 \\
        &48 &6.37 $\times 10^{-8}$ & 2.98 & 1.68 $\times 10^{-5}$ & 2.00 \\
        \bottomrule
        \multirow{4}{*}{$[\mathbb{Q}_3]^2$}
        &6 & 3.02 $\times 10^{-7}$ & -- & 2.25 $\times 10^{-5}$ &-- \\
        &12 & 2.17 $\times 10^{-8}$ & 3.80 & 2.72 $\times 10^{-6}$ & 3.05\\
        &24 & 1.45 $\times 10^{-9}$ & 3.90 & 3.34 $\times 10^{-7}$ & 3.03\\
        &48 & 9.32 $\times 10^{-11}$ & 3.96 & 4.13 $\times 10^{-08}$ & 3.01\\
        \bottomrule
    \end{tabular}
    \caption{Test 1: Convergence rates for $\Qvec$ with $q=30$ and penalty parameter $\epsilon=5\times 10^4$ with the inconsistent discretisation \cref{eq:weak-u-actual} for $u$, fixing the approximation for $u$ to be with the $\mathbb{Q}_3$ element.}
    \label{table:q30-eps5e4-Q}
\end{table}

\begin{remark}
    We also tested the convergence with the consistent weak formulation for $u$ under the same numerical settings as in \cref{table:q30-eps5e4-u,table:q30-eps5e4-Q}.
    We found that in both cases they present very similar convergence behaviour and thus we omit the details here.
\end{remark}

\subsection{Test 2: on the unit disc}

For the second set of experiments, we provide numerical results for an exact solution $u^e$ with only $H^3$-regularity, instead of $C^\infty$ as in \cref{eq:mms}. Our goal is to investigate whether the $H^4$-regularity assumption on $u$ can be relaxed. To this end, we consider a triangular mesh of $\Omega=\{(x,y)\ |\ x^2+y^2 < 1\}$ and choose $u^e$ to be
\begin{equation}
    \label{eq:test2-ue}
    u^e=(x^2+y^2)^{3/2},
\end{equation}
and choose the same exact solution for $Q^e_{11}$ and $Q^e_{12}$ as in \cref{eq:mms}. The exact solution given by~\cref{eq:test2-ue} is in $H^3(\Omega)$ but not in $H^4(\Omega)$, hence violating the regularity assumption of the analysis in \cref{sec:apriori-u}.

The resulting convergence rates are reported in tables~\ref{table:test2-Q} and~\ref{table:test2-uactual}.
\Cref{table:test2-Q} shows that optimal $H^1$ and $L^2$ rates are achieved for $\Qvec$ with three different choices of finite elements $[\mathbb{P}_1]^2, [\mathbb{P}_2]^2, [\mathbb{P}_3]^2$.
\Cref{table:test2-uactual} shows the convergence behaviour with penalty parameter $\varepsilon=1$ when using the inconsistent discrete formulation~\cref{eq:weak-u-actual}. In contrast to~\cref{table:uactual-eps1}, only first order convergence is obtained for the discrete norm $\vertiii{\cdot}_h$ and second-order convergence for $\|\cdot\|_0$ and $\|\cdot\|_1$, with both $\mathbb{P}_3$ and $\mathbb{P}_4$. Interestingly, \cref{table:test2-uactual} indicates no convergence when using $\mathbb{P}_2$ elements. It appears that the assumption $u \in H^4(\Omega)$ is necessary for our analysis, and that a different analysis should be carried out when this assumption no longer holds.

\begin{table}[!ht]
\centering
    \begin{tabular}{clllll}
        \toprule
        & No.\ of triangles & $\|\textbf{e}_\Qvec\|_0$ & rate & $\|\textbf{e}_\Qvec\|_1$ & rate \\
         \midrule
         \multirow{5}{*}{$[\mathbb{P}_1]^2$}
        & 60 & 6.08 $\times 10^{-2}$ &-- & 1.09 &-- \\
        &240 & 1.56 $\times 10^{-2}$ & 1.96 & 5.80 $\times 10^{-1}$ & 0.91\\
        &960 & 3.92 $\times 10^{-3}$ & 2.00 & 2.93 $\times 10^{-1}$ & 0.99\\
        &3840 & 9.83 $\times 10^{-4}$ & 2.00 & 1.47 $\times 10^{-1}$ & 1.00\\
        &15360 & 2.47 $\times 10^{-4}$  & 1.99 & 7.34 $\times 10^{-2}$ & 1.00\\
        \bottomrule
         \multirow{5}{*}{$[\mathbb{P}_2]^2$}
        & 60 & 8.97 $\times 10^{-3}$ &-- & 2.11 $\times 10^{-1}$ &-- \\
        &240 & 1.51 $\times 10^{-3}$ & 2.57 & 5.87 $\times 10^{-2}$ & 1.84\\
        &960 & 2.22 $\times 10^{-4}$ & 2.77 & 1.52 $\times 10^{-2}$ & 1.95\\
        &3840 & 3.02 $\times 10^{-5}$ & 2.88 & 3.85 $\times 10^{-3}$ & 1.98\\
        &15360 & 3.93 $\times 10^{-6}$  & 2.94 & 9.67 $\times 10^{-4}$ & 1.99\\
        \bottomrule
        \multirow{5}{*}{$[\mathbb{P}_3]^2$}
        & 60 & 1.08 $\times 10^{-3}$ &-- & 3.21 $\times 10^{-2}$ &-- \\
        &240 & 8.21 $\times 10^{-5}$ & 3.72 & 4.58 $\times 10^{-3}$ & 2.81 \\
        &960 & 5.52 $\times 10^{-6}$ & 3.89 & 5.92 $\times 10^{-4}$ & 2.95\\
        &3840 & 3.54 $\times 10^{-7}$ & 3.96 & 7.44 $\times 10^{-5}$ & 2.99\\
        &15360 & 2.23 $\times 10^{-8}$  & 3.99 & 9.31 $\times 10^{-6}$ & 3.00\\
        \bottomrule
    \end{tabular}
    \caption{Test 2: Convergence rates for $\Qvec$ with different degrees of polynomial approximation, in the decoupled case $q=0$.}
    \label{table:test2-Q}
\end{table}

\begin{table}[!ht]
\centering
    \begin{tabular}{clllllll}
        \toprule
        & No.\ of triangles & $\|\textbf{e}_u\|_0$ & rate & $\|\textbf{e}_u\|_1$ & rate& $\vertiii{\textbf{e}_u}_h$ & rate \\
         \midrule
         \multirow{5}{*}{$\mathbb{P}_2$}
        & 60 & 1.36 $\times 10^{-2}$ &-- & 2.77 $\times 10^{-1}$ &-- & 7.39 & --\\
        &240 & 1.58 $\times 10^{-3}$ & 3.11 & 3.86 $\times 10^{-2}$ & 2.84 & 2.39 & 1.63\\
        &960 & 7.76 $\times 10^{-4}$ & 1.02 & 1.57 $\times 10^{-2}$ & 1.30 & 1.44 & 0.73\\
        &3840 & 1.84 $\times 10^{-3}$ & -1.25 & 3.42 $\times 10^{-2}$ & -1.12 & 1.26 & 0.19\\
        &15360 & 2.77 $\times 10^{-3}$  & -0.59 & 5.29 $\times 10^{-2}$ & -0.63 & 1.60 & -0.34\\
        \bottomrule
        \multirow{5}{*}{$\mathbb{P}_3$}
        & 60 & 9.94 $\times 10^{-3}$ &-- & 2.21 $\times 10^{-1}$ &-- & 6.15 & --\\
        &240 & 3.99 $\times 10^{-3}$ & 1.32 & 8.49 $\times 10^{-2}$ & 1.38 & 3.03 & 1.02\\
        &960 & 1.33 $\times 10^{-4}$ & 4.90 & 4.57 $\times 10^{-3}$ & 4.22 & 1.27 & 1.26\\
        &3840 & 3.21 $\times 10^{-5}$ & 2.06 & 8.47 $\times 10^{-4}$ & 2.43 & 0.66 & 0.93\\
        &15360 & 9.22 $\times 10^{-6}$  & 1.80 & 2.11 $\times 10^{-4}$ & 2.00 & 0.34 & 0.97\\
        \bottomrule
        \multirow{5}{*}{$\mathbb{P}_4$}
        & 60 & 7.17 $\times 10^{-3}$ &-- & 1.65 $\times 10^{-1}$ &-- & 4.70 & --\\
        &240 & 1.34 $\times 10^{-3}$ & 2.42 & 3.84 $\times 10^{-2}$ & 2.10 & 2.39 & 0.98\\
        &960 & 1.18 $\times 10^{-4}$ & 3.50 & 4.54 $\times 10^{-3}$ & 3.08 & 1.28 & 0.90\\
        &3840 & 2.98 $\times 10^{-5}$ & 1.99 & 8.14 $\times 10^{-4}$ & 2.48 & 0.67 & 0.94\\
        &15360 & 8.15 $\times 10^{-6}$  & 1.87 & 1.86 $\times 10^{-4}$ & 2.13 & 0.34& 0.97\\
        \bottomrule
    \end{tabular}
    \caption{Test 2: Convergence rates using the inconsistent discrete formulation \cref{eq:weak-u-actual} with penalty parameter $\epsilon=1$ and different polynomial degrees, in the decoupled case $q=0$.}
    \label{table:test2-uactual}
\end{table}

\bibliographystyle{m2anplain}
\bibliography{main}

\begin{thebibliography}{10}

\bibitem{argyris-1968-article}
{\sc J.~H. Argyris, I.~Fried, and D.~W. Scharpf}, {\em {The TUBA family of
  plate elements for the matrix displacement method}}, Aeronaut. J., 72 (1968),
  pp.~701--709.

\bibitem{ball-2017-article}
{\sc J.~M. Ball}, {\em Mathematics and liquid crystals}, Mol. Cryst. Liq.
  Cryst., 647 (2017), pp.~1--27.

\bibitem{ballbed-2015-article}
{\sc J.~M. Ball and S.~J. Bedford}, {\em Discontinuous order parameters in
  liquid crystal theories}, Mol. Cryst. Liq. Cryst., 612 (2015), pp.~1--23.

\bibitem{bedford-2014-phd}
{\sc S.~J. Bedford}, {\em {Calculus of variations and its application to liquid
  crystals}}, PhD thesis, University of Oxford, 2014.

\bibitem{blum-1980-article}
{\sc H.~Blum and R.~R. Bonn}, {\em On the boundary value problem of the
  biharmonic operator on domains with angular corners}, Math. Mech. in the
  Appli. Sci., 2 (1980), pp.~556--581.

\bibitem{borthagaray2020}
{\sc J.~P. Borthagaray, R.~H. Nochetto, and S.~W. Walker}, {\em A
  structure-preserving {FEM} for the uniaxially constrained
  {{\textbf{Q}}}-tensor model of nematic liquid crystals}, Numer. Math., 145
  (2020), pp.~837--881.

\bibitem{brenner-2011-book}
{\sc S.~C. Brenner}, {\em {$C^0$} interior penalty methods}, in Frontiers in
  Numerical Analysis - Durham 2010. Lecture Notes in Computational Science and
  Engineering, J.~Blowey and M.~Jensen, eds., vol.~85, Springer, Berlin,
  Heidelberg, 2011.

\bibitem{brenner-2010-article}
{\sc S.~C. Brenner, T.~Gudi, and L.~Sung}, {\em A weakly over-penalized
  symmetric interior penalty method for the biharmonic problem}, Electon.
  Trans. Numer. Anal., 37 (2010), pp.~214--238.

\bibitem{brenner-2005-article}
{\sc S.~C. Brenner and L.~Sung}, {\em {$C^0$} interior penalty methods for
  fourth order elliptic boundary value problems on polygonal domains}, J. Sci.
  Comput., 22 (2005), pp.~83--118.

\bibitem{brenner-2008-article}
{\sc S.~C. Brenner and L.~Sung}, {\em A weakly over-penalized symmetric
  interior penalty method}, Electon. Trans. Numer. Anal., 30 (2008),
  pp.~107--127.

\bibitem{brenner-2004-article}
{\sc S.~C. Brenner, K.~Wang, and J.~Zhao}, {\em {Poincar\'e-Friedrichs
  inequalities for piecewise $H^2$ functions}}, Numer. Funct. Anal. Optim., 25
  (2004), pp.~463--478.

\bibitem{cheng-2000-article}
{\sc X.~Cheng, W.~Han, and H.~Huang}, {\em Some mixed finite element methods
  for the biharmonic equation}, J. Comp. Appl. Math., 126 (2000), pp.~91--109.

\bibitem{davis-phd-thesis}
{\sc T.~A. Davis}, {\em {Finite element analysis of the Landau--de Gennes
  minimization problem for liquid crystals in confinement}}, PhD thesis, Kent
  State University, 1994.

\bibitem{davis-1998-article}
{\sc T.~A. Davis and J.~E.~C. Gartland}, {\em {Finite element analysis of the
  Landau-de Gennes minimization problem for liquid crystals}}, SIAM J. Numer.
  Anal., 35 (1998), pp.~336--362.

\bibitem{gennes-1972-article}
{\sc P.~G. de~Gennes}, {\em An analogy between superconductors and smectic
  {A}}, Solid State Commun., 10 (1972), pp.~753--756.

\bibitem{gennes-book}
{\sc P.~G. de~Gennes}, {\em {The Physics of Liquid Crystals}}, Oxford
  University Press, Oxford, 1974.

\bibitem{engel-2002-article}
{\sc G.~Engel, K.~Garikipati, T.~J.~R. Hughes, M.~G. Larson, L.~Mazzei, and
  R.~L. Taylor}, {\em {Continuous/discontinuous finite element approximations
  of fourth-order elliptic problems in structural and continuum mechanics with
  applications to thin beams and plates, and strain gradient elasticity}},
  Comput. Methods Appl. Mech. Engrg., 191 (2002), pp.~3669--3750.

\bibitem{evans-2010-book}
{\sc L.~C. Evans}, {\em Partial Differential Equations}, vol.~19 of Graduate
  Studies in Mathematics, American Mathematical Society, Providence, RI,
  2nd~ed., 2010.

\bibitem{garcia-cervera-2010}
{\sc C.~J. Garc{\'{\i}}a-Cervera and S.~Joo}, {\em Layer undulations in
  {Smectic A} liquid crystals}, J. Comp. Theo. Nano., 7 (2010), pp.~795--801.

\bibitem{kesavan-1989-book}
{\sc S.~Kesavan}, {\em {Topics in Functional Analysis and Applications}}, John
  Wiley \& Sons, New York, 1989.

\bibitem{maity-2020a-article}
{\sc R.~R. Maity, A.~Majumdar, and N.~Nataraj}, {\em {Discontinuous Galerkin
  finite element methods for the Landau--de Gennes minimization problem of
  liquid crystals}}, IMA J. Numer. Anal., 41 (2021), pp.~1130--1163.

\bibitem{paul-2021-article}
{\sc P.~Monderkamp, R.~Wittmann, L.~B.~G. Cortes, D.~G. A.~L. Aarts, and
  H.~L\"{o}wen}, {\em Topology of orientational defects in confined smectic
  liquid crystals}, Phys. Rev. Lett., 127 (2021), pp.~1--10.

\bibitem{mottram-2014-article}
{\sc N.~J. Mottram and C.~J.~P. Newton}, {\em {Introduction to
  {$\mathbf{Q}$}-tensor theory}}, 2014, \url{https://arxiv.org/abs/1409.3542}.

\bibitem{pevnyi-2014-article}
{\sc M.~Y. Pevnyi, J.~Selinger, and T.~J. Sluckin}, {\em {Modeling smectic
  layers in confined geometries: order parameter and defects}}, Phys. Rev. E,
  90 (2014), pp.~1--8.

\bibitem{robinson2017}
{\sc M.~Robinson, C.~Luo, P.~E. Farrell, R.~Erban, and A.~Majumdar}, {\em From
  molecular to continuum modelling of bistable liquid crystal devices}, Liq.
  Cryst., 44 (2017), pp.~2267--2284.

\bibitem{sackmann-1989-article}
{\sc H.~Sackmann}, {\em {Plenary lecture. Smectic liquid crystals. A historical
  review}}, Liq. Cryst., 5 (1989), pp.~43--55.

\bibitem{scholtz-1978-article}
{\sc R.~Scholtz}, {\em A mixed method for fourth-order problems using linear
  finite elements}, RAIRO Numer. Anal., 15 (1978), pp.~85--90.

\bibitem{stewart-2004-book}
{\sc I.~W. Stewart}, {\em The Static and Dynamic Continuum Theory of Liquid
  Crystals: A Mathematical Introduction}, CPC Press, 2004.

\bibitem{suli-2007-article}
{\sc E.~S\"{u}li and I.~Mozolevski}, {\em {$hp$-version interior penalty DGFEMs
  for the biharmonic equation}}, Comput. Methods Appl. Mech. Engrg., 196
  (2007), pp.~1851--1863.

\bibitem{rene-2021-article}
{\sc R.~Wittmann, L.~B.~G. Cortes, H.~L\"{o}wen, and D.~G. A.~L. Aarts}, {\em
  Particle-resolved topological defects of smectic colloidal liquid crystals in
  extreme confinement}, Nat. Comm., 12 (2021), pp.~1--10.

\bibitem{xia-2021-article}
{\sc J.~Xia, S.~MacLachlan, T.~J. Atherton, and P.~E. Farrell}, {\em Structural
  landscapes in geometrically frustrated smectics}, Phys. Rev. Lett., 126
  (2021), pp.~1--6.

\bibitem{zhang-2009-article}
{\sc S.~Zhang}, {\em {A family of 3D continuously differentiable finite
  elements on tetrahedral grids}}, Appl. Numer. Math., 59 (2009), pp.~219--233.

\end{thebibliography}
\end{document}